\def\disp{\displaystyle}
\def\dref#1{(\ref{#1})}
\theoremstyle{plain}
\newtheorem{theorem}{Theorem}[section]
\newtheorem{lemma}{Lemma}[section]
\theoremstyle{definition}
\newtheorem{remark}{Remark}[section]
\numberwithin{equation}{section}
\begin{document}

\title{\bf Global existence and boundedness of solution of a   parabolic--parabolic--ODE chemotaxis--haptotaxis model with
(generalized) logistic
source}

\author{Ling Liu $^{1}$ \thanks{E-mail address:
 liuling2004@sohu.com (L. Liu)}£¬
Jiashan Zheng$^{a,b}$
\thanks{Corresponding author.   E-mail address:
 zhengjiashan2008@163.com (J. Zheng)}
\\
 $^{1}$
    Department of Basic Science,\\
    Jilin Jianzhu University, Changchun 130118, P.R.China \\
 $^{a}$
    School of Information,\\
    Renmin University of China, Beijing, 100872, P.R.China \\
    $^{b}$
    School of Mathematics and Statistics Science,\\
     Ludong University, Yantai 264025,  P.R.China \\
% $^2$School of Mathematics and Statistics, Beijing
%Institute of Technology\\ Beijing 100081, P.R.China \\
 %$^3$Science College of Northeast Dianli University, Jilin City 132013, China
}
\date{}

\maketitle \vspace{0.3cm}
\noindent
\begin{abstract}
%We consider nonnegative solutions of the Neumann boundary value problem for the
%chemotaxis system
In this paper, we study the following chemotaxis--haptotaxis system with (generalized) logistic
source
$$
 \left\{\begin{array}{ll}
  u_t=\Delta u-\chi\nabla\cdot(u\nabla v)-
  \xi\nabla\cdot(u\nabla w)+u(a-\mu  u^{r-1}-w),\\
 \displaystyle{v_t=\Delta v- v +u},\quad
\\
\displaystyle{w_t=- vw},\quad\\
\displaystyle{\frac{\partial u}{\partial \nu}=\frac{\partial v}{\partial \nu}=\frac{\partial w}{\partial \nu}=0},\quad
x\in \partial\Omega, t>0,\\
\displaystyle{u(x,0)=u_0(x)},v(x,0)=v_0(x),w(x,0)=w_0(x),\quad
x\in \Omega,\\
 \end{array}\right.\eqno(0.1)
$$
%under homogeneous Neumann boundary conditions
in a smooth bounded domain  $\mathbb{R}^N(N\geq1)$, with parameter $r>1$.
 the parameters $a\in \mathbb{R}, \mu>0, \chi>0$.
 It is shown that when
$r>2$,
or
\begin{equation*}
 \mu>\mu^{*}=\begin{array}{ll}
\frac{(N-2)_{+}}{N}(\chi+C_{\beta}) C^{\frac{1}{\frac{N}{2}+1}}_{\frac{N}{2}+1},~~~\mbox{if}~~r=2,\\
 \end{array}
\end{equation*}
% $\mu>\frac{(N-2)_{+}}{N}\chi C^{\frac{1}{\frac{N}{2}+1}}_{\frac{N}{2}+1}$,
the considered problem possesses a global
classical solution which is bounded,
where $C^{\frac{1}{\frac{N}{2}+1}}_{\frac{N}{2}+1}$
is a positive constant which is corresponding to the maximal sobolev
regularity. Here $C_{\beta}$ is a  positive constant which depends on $\xi$, $\|u_0\|_{C(\bar{\Omega})},\|v_0\|_{W^{1,\infty}(\Omega)}$
 and $\|w_0\|_{L^\infty(\Omega)}$. This result improves or extends
previous results of several authors.
\end{abstract}

\vspace{0.3cm}
\noindent {\bf\em Key words:}~Boundedness;
Chemotaxis--haptotaxis;
%Global existence;
(Generalized) logistic source

\noindent {\bf\em 2010 Mathematics Subject Classification}:~  92C17, 35K55,
35K59, 35K20

\newpage
\section{Introduction}
Chemotaxis is the oriented cell movement along concentration gradients of a chemical signal
produced by the cells themselves. In 1970s, a well-known chemotaxis model was proposed by Keller and Segel (\cite{Keller79}), which describes the aggregation processes of the cellular slime mold Dictyostelium discoideum.
Since then, a number of variations of the Keller--Segel model  have attracted the attention of many mathematicians, and
the focused issue was the boundedness or blow-up of the solutions (\cite{Cie791,Hillen79,Horstmann2710,Horstmann791,Winkler793,Painter79}).
The striking feature of Keller--Segel models  is the possibility
of blow-up of solutions in a finite (or infinite) time (see, e.g., \cite{Bellomo1216,Horstmann2710,Nanjundiahffger79312,Winkler793}), which
strongly depends on the space dimension. We also refer the
reader to Winkler \cite{Winkler715,Winklersdddhjjj715,Winklersddd715} (and the references therein) for some other works on the
finite-time blow up of solutions of the variants of Keller--Segel models.
Moreover, some recent studies have shown that
the blow-up of solutions can be inhibited by the nonlinear diffusion (see   {Ishida} et al. \cite{Ishida}  Winkler et al. \cite{Bellomo1216,Tao794,Winkler79,Winkler72})
and the (generalized) logistic damping (see  Li and  Xiang\cite{LiLittffggsssdddssddxxss}, Tello and  Winkler \cite{Tello710}, Wang et al. \cite{Wang76}, Zheng et al.  \cite{Zhengzseeddd0}).

In order to describe the
cancer invasion mechanism, in 2005, Chaplain and Lolas (\cite{Chaplain1}) extended the classical Keller--Segel model
where, in addition to random diffusion, cancer cells bias their movement towards a gradient of
a diffusible matrix-degrading enzyme (MDE) secreted by themselves, as well as a gradient of
a static tissue, referred to as extracellular matrix (ECM), by detecting matrix molecules such
as vitronectin adhered therein. The latter type of directed migration of cancer cells is usually
referred to as haptotaxis (see Chaplain and Lolas \cite{Chaplain7}).
According to the model proposed in \cite{Chaplain1,Chaplain7,Hillensxdc79}, in this paper,
we  consider the chemotaxis--haptotaxis system with (generalized) logistic
source
\begin{equation}
 \left\{\begin{array}{ll}
  u_t=\Delta u-\chi\nabla\cdot(u\nabla v)-\xi\nabla\cdot
  (u\nabla w)+ u(1-u^{r-1}-w),\quad
x\in \Omega, t>0,\\
 \displaystyle{v_t=\Delta v +u- v},\quad
x\in \Omega, t>0,\\
\displaystyle{w_t=- vw },\quad
x\in \Omega, t>0,\\
 \displaystyle{\frac{\partial u}{\partial \nu}=\frac{\partial v}{\partial \nu}=\frac{\partial w}{\partial \nu}=0},\quad
x\in \partial\Omega, t>0,\\
\displaystyle{u(x,0)=u_0(x)},v(x,0)=v_0(x),w(x,0)=w_0(x),\quad
x\in \Omega,\\
 \end{array}\right.\label{1.1}
\end{equation}
where
  $r>1,\Omega\subset \mathbb{R}^N(N\geq1)$ is a bounded domain with smooth boundary,  $\disp\frac{\partial}{\partial\nu}$ denotes the outward
normal derivative on $\partial\Omega$, the three variables $u, v$ and $w$ represent the cancer cell density, the
MDE concentration and the ECM density, respectively.
  %and $D\in C^2([0;\infty))$ is the nonlinear diffusion function.
  The parameters $\chi,\xi$ and $\mu$ are
positive which %$\chi$ and $\xi$
 measure the chemotactic,  haptotactic sensitivities and the proliferation rate of the cells, respectively. %Apart from that, in this modeling context the cancer cells are also assumed
%to undergo birth and death in a logistic manner, competing for space with the ECM, and
%$\mu$ is the proliferation rate of the cells.
As is pointed out by \cite{Bellomo1216} (see also Tao and Winkler \cite{Tao72}, Winkler \cite{Winkler715}, Zheng \cite{Zhenssdssdddfffgghjjkk1}), in this modeling context the cancer cells are also usually  assumed
to follow a generalized logistic growth $u(1-u^{r-1}-w)$ ($r>1$), which denotes the proliferation rate of the cells and competing for space with healthy tissue.
And the initial data $(u_0, v_0,w_0)$ supposed to be satisfied the following
conditions
\begin{equation}\label{x1.731426677gg}
\left\{
\begin{array}{ll}
\displaystyle{u_0\in C(\bar{\Omega})~~\mbox{with}~~u_0\geq0~~\mbox{in}~~\Omega~~\mbox{and}~~u_0\not\equiv0},\\
\displaystyle{v_0\in W^{1,\infty}(\Omega)~~\mbox{with}~~v_0\geq0~~\mbox{in}~~\Omega},\\
\displaystyle{w_0\in C^{2+\vartheta}(\bar{\Omega})~~\mbox{with}~~w_0\geq0~~\mbox{in}~~\bar{\Omega}~~\mbox{and}~~\frac{\partial w_0}{\partial\nu}=0~~\mbox{on}~~\partial\Omega} ~~~~~~~~~~~~~~~~~~~~~~~~~~~~~~~~~~~~~~~~~~~~~~~~~~~~~~~~~~~~~~~~~~~\\
\end{array}
\right.
\end{equation}
with some $\vartheta\in(0,1).$

In order to better understand model \dref{1.1}, let us mention %some previous contributions
%in this direction. In recent years, the following initial boundary value problems
%have been studied by many authors
%Before stating our main results about the model \dref{ffbbggnjkk1.1}, let us mention
 the following quasilinear chemotaxis--haptotaxis system, which is a closely related variant of \dref{1.1}
 \begin{equation}
 \left\{\begin{array}{ll}
  u_t=\nabla\cdot(\phi(u)\nabla u)-\chi\nabla\cdot(u\nabla v)-\xi\nabla\cdot
  (u\nabla w)+ \mu u(1-u^{r-1}-w),\quad
x\in \Omega, t>0,\\
 \displaystyle{\tau v_t=\Delta v +u- v},\quad
x\in \Omega, t>0,\\
\displaystyle{w_t=- vw },\quad
x\in \Omega, t>0,\\
 \displaystyle{\frac{\partial u}{\partial \nu}=\frac{\partial v}{\partial \nu}=\frac{\partial w}{\partial \nu}=0},\quad
x\in \partial\Omega, t>0,\\
\displaystyle{u(x,0)=u_0(x)},v(x,0)=v_0(x),w(x,0)=w_0(x),\quad
x\in \Omega,\\
 \end{array}\right.\label{ffbbggnjkk1.1}
\end{equation}
where $\mu\geq0,\tau\in\{0,1\}$,
the function $\phi(u)$
 fulfills
\begin{equation}\label{9161}
\phi\in  C^{2}([0,\infty))
\end{equation}
and there
exist  constants $m\geq1$ and  $C_{\phi}$
such that
\begin{equation}\label{9162}
\phi(u) \geq C_{\phi}(u+1)^{m-1}~ \mbox{for all}~ u\geq0.
\end{equation}

When $w \equiv 0$, \dref{ffbbggnjkk1.1} is reduced to the chemotaxis-only system  with (generalized) logistic source (see  Xiang \cite{Xiangssdd55672gg},
Zheng et al.
\cite{Zheng0,Zheng33312186,Zhengsssddsseedssddxxss,Zhengghjjkk1,Zhddddffengssdsseerrteeezseeddd0}). And
%\begin{equation}
% \left\{\begin{array}{ll}
%  u_t=\Delta u-\chi\nabla\cdot(u\nabla v)+ \mu u(1-u-w),\quad
%x\in \Omega, t>0,\\
% \displaystyle{\tau v_t=\Delta v +u- v},\quad
%x\in \Omega, t>0,\\
%%\displaystyle{w_t=- vw },\quad
%%x\in \Omega, t>0,\\
% \displaystyle{\frac{\partial u}{\partial \nu}=\frac{\partial v}{\partial \nu}=0},\quad
%x\in \partial\Omega, t>0,\\
%\displaystyle{u(x,0)=u_0(x)},v(x,0)=v_0(x),\quad
%x\in \Omega.\\
% \end{array}\right.\label{ffddfffeerrfbbggnjkk1.1}
%\end{equation}
global existence, boundedness  and asymptotic behavior of solution were studied in \cite{Lianu1,Marciniak,Walker,Zhengsssddswwerrrseedssddxxss}.
Going beyond the above statements, we  should mention the papers \cite{Winkler793}  and \cite{Winklersdddhjjj715} (and references therein), which deal with the  blow-up
of solutions to parabolic-elliptic versions of \dref{ffbbggnjkk1.1} (with $w\equiv0$). For example,   when $D(u)\equiv1$ and $N\geq3$,   it is demonstrated in  \cite{Winklersdddhjjj715} that a superlinear growth condition
on logistic
source may be insufficient to prevent finite time blow-ups for a parabolic-elliptic system of
\dref{ffbbggnjkk1.1}, which is the first rigorous detection of blow-up in a superlinearly dampened of  Keller-Segel system in {\bf three}-dimensional case. %Therefore, the  result in \cite{Winklersdddhjjj715}  partially solved the open question which was proposed by \cite{Bellomo1216}.
 From a theoretical point of view, due to the fact that the chemotaxis and haptotaxis terms require different $L^p$-estimate techniques, the problem related to the chemotaxis--haptotaxis models of cancer invasion presents an important mathematical challenging.
 There are only few results on the mathematical analysis of this (quasilinear)  chemotaxis--haptotaxis system \dref{ffbbggnjkk1.1} (Cao \cite{Cao}, Zheng et al. \cite{liujinijjkk1}, Tao  et al. \cite{Taox3201,Tao3,Tao2,Tao72,Taox26,Taox26216,Tao79477},  Wang et al. \cite{liujinijjkk1,Wangscd331629,Wangrrssdeeefffjkk1,Zhengghjjkk1}).
Indeed, if MDEs diffuses much faster than cells (see \cite{Jger317,Taox26216}),
 ($\tau = 0$ in  the second equation
  of \dref{ffbbggnjkk1.1}),  \dref{ffbbggnjkk1.1} is reduced to the parabolic--ODE--elliptic chemotaxis--haptotaxis system
  the (generalized) logistic source.
  To the best of our knowledge, there exist some boundedness and stabilization results on the simplified parabolic--elliptic--ODE chemotaxis--haptotaxis model \cite{Tao2,Taox26216,Taox26}.
When,
$r=2$ in the first equation of \dref{ffbbggnjkk1.1},   %\dref{ffbbggnjkk1.1} is reduced to the chemotaxis--haptotaxis system with the standard   logistic source.
the global boundedness of solutions to the chemotaxis--haptotaxis system with the standard   logistic source
 has been proved for any $\mu>0$ in two dimensions and for large ${\mu}$ (compared to the chemotactic sensitivity $\chi$) in three dimensions (see Tao and Wang \cite{Tao2}).
In \cite{Taox26216}, Tao and Winkler studied the global boundedness for model \dref{ffbbggnjkk1.1}
 under the condition ${\bf \mu > \frac{(N-2)^+}{N}\chi}$, moreover, in additional explicit
smallness on $w_0$, they gave the exponential decay of $w$ in the large time limit.
While if $r>1$ (the (generalized) logistic source), one can see  \cite{Zhengghjjkk1}.
%for other
%types of model that are closely related to \dref{ffbbggnjkk1.1} have been studied by some authors (see our recent paper  \cite{Zhengghjjkk1}).

As for parabolic--ODE--parabolic system  \dref{ffbbggnjkk1.1}, if $r=2,$ there has been some progress made in two or three dimensions
(see Cao  \cite{Cao} Tao and Winkler \cite{Tao1,Tao3,Taox26216}). In fact, when $\phi\equiv1$, %the global existence and boundedness of classical solutions to linear
%of the parabolic--parabolic--ODE chemotaxis--haptotaxis system
% \dref{sddfffffff1.1} %(if  in the first equation of \dref{sddfffffff1.1})
% has been proved for any $\mu >0$ in two dimensions (see Tao \cite{Tao1,Taox3201}), for large $\mu$ in three dimensions
% (see Cao \cite{Cao} and Tao \cite{Tao3}).
Cao (\cite{Cao}) and Tao (\cite{Taox3201})   proved that \dref{ffbbggnjkk1.1} admits a unique, smooth and bounded solution if
 $\mu>0$ on $N=2$ and $\mu$ is {\bf large enough} on $N=3$. Recently, assume that $\mu$ is {\bf large enough} and $3\leq N\leq 8$, %Wang and Ke showed that
 the boundedness of
the global solution of system  \dref{ffbbggnjkk1.1} are obtained by Wang and Ke in \cite{Wangrrssdeeefffjkk1}.
However, they  did not give the lower bound estimation for the logistic source.
%While
Note that the global existence and boundedness of solutions to  \dref{ffbbggnjkk1.1} is still open in three dimensions for {\bf small $\mu >0$} and in higher dimensions.

 %However, he did not give the lower bound estimation for the logistic source.

 %In fact, when $\phi\equiv1$, the global existence of classical solutions to linear of the parabolic--parabolic--ODE chemotaxis--haptotaxis system
% \dref{sddfffffff1.1} %(if  in the first equation of \dref{sddfffffff1.1})
%has been proved for any $\mu>0$ in two dimensions \cite{Tao1}, for large $\mu$ in three dimensions \cite{Tao3},
%
%%In particular, the similar results on the global boundedness of solutions to  \dref{ffbbggnjkk1.1} have been proved recently in \cite{Taox3201,Cao}.
%
%

%As to the parabolic--parabolic--ODE chemotaxis--haptotaxis system \dref{sddfffffff1.1} ($\tau=1$ in the first equation of \dref{sddfffffff1.1}),
%only little appears to be known. In fact, when $\phi\equiv1$, the global existence of classical solutions to linear of the parabolic--parabolic--ODE chemotaxis--haptotaxis system
% \dref{sddfffffff1.1} %(if  in the first equation of \dref{sddfffffff1.1})
%has been proved for any $\mu>0$ in two dimensions \cite{Tao1}, for large $\mu$ in three dimensions \cite{Tao3}, however, the result on the global boundedness of solutions to \dref{sddfffffff1.1} are made only in space dimensions $N \leq3$ and is still left in higher dimensions \cite{Hillensxdc79,Taox3201,Cao}.

The main  object of the present paper is to address the  boundedness to solutions of \dref{1.1} without any restriction on the {\bf space dimension}. Our main result is the following.

\begin{theorem}\label{theorem3}Assume
that %$\phi$   satisfy \dref{9161}--\dref{9162} and
the initial data $(u_0,v_0,w_0)$ fulfills \dref{x1.731426677gg}. For any $N\geq1$,
 %\begin{equation}\label{x1.731426677gg}
%\left\{
%\begin{array}{ll}
%\displaystyle{u_0\in C(\bar{\Omega})~~\mbox{with}~~u_0\geq0~~\mbox{in}~~\Omega~~\mbox{and}~~u_0\not\equiv0},\\
%\displaystyle{v_0\in W^{1,\infty}(\Omega)~~\mbox{with}~~v_0\geq0~~\mbox{in}~~\Omega},\\
%\displaystyle{w_0\in C^{2+\vartheta}(\bar{\Omega})~~\mbox{with}~~w_0\geq0~~\mbox{in}~~\bar{\Omega}~~\mbox{and}~~\frac{\partial w_0}{\partial\nu}=0~~\mbox{on}~~\partial\Omega} ~~~~~~~~~~~~~~~~~~~~~~~~~~~~~~~~~~~~~~~~~~~~~~~~~~~~~~~~~~~~~~~~~~~\\
%\end{array}
%\right.
%\end{equation}
%with some $\vartheta\in(0,1).$
if one of the following cases holds:

(i)~$r>2$;
%
%%(ii)~$q+1=m+\displaystyle\disp\frac{2}{N}$;
%%
%(ii)~\begin{equation*}
%b>b^*:=%\left\{
%\begin{array}{ll}
%\disp\frac{N[r-m]-2}{(r-m)N+2(r-2)}\chi C_{\psi}~~\mbox{if}~~
%r=2;\\
%% 0~~\mbox{if}~~
%%q+1=m+\disp\disp\frac{2}{N};
% \end{array}
%% \right.
%\end{equation*}
%
%% $\phi$   satisfy \dref{9161}--\dref{9162}.
%  If $r>2$ or

(ii)
   \begin{equation}
\mu> \mu^{*}=\begin{array}{ll}
\frac{(N-2)_{+}}{N}(\chi+C_{\beta}) C^{\frac{1}{\frac{N}{2}+1}}_{\frac{N}{2}+1},~~~\mbox{if}~~r=2,\\
 \end{array}\label{gnjjmmx1.731426677gg}
\end{equation}
 % \begin{equation}\label{gnjjmmx1.731426677gg}
%\mu>\frac{(N-2)_{+}}{N}\chi C^{\frac{1}{\frac{N}{2}+1}}_{\frac{N}{2}+1}
%\begin{array}{ll}~~\mbox{if}~~
%r=2\\
% \end{array}
%\end{equation}
% $\mu>\frac{(N-2)_{+}}{N}\chi C^{\frac{1}{\frac{N}{2}+1}}_{\frac{N}{2}+1}$,
%then the considered problem possesses a global
%classical solution which is bounded,
% \begin{equation}\label{gnjjmmx1.731426677gg}
% \mu\begin{array}{ll}
%>2-\frac{2}{N}~~\mbox{if}~~
%1<r<\frac{N+2}{N},\\
% >1+\frac{(N+2-2r)^+}{N+2}~~~~~~\mbox{if}~~
% \frac{N+2}{2}\geq r\geq\frac{N+2}{N},\\
% \geq1~~~~~~\mbox{if}~~ r>\frac{N+2}{2},\\
% \end{array}
%\end{equation}
%(iii)~$q+r=r< m+\disp\disp\frac{2}{N};$
%$m(p-1)\leq q+1-\disp\frac{p}{N}$
%
%(iii)$q+r>\max\{r,m\}$, $r>1$ and $m> q+r-\disp\disp\frac{2}{N}$;
%
%
%(iv)$q+r>\max\{r,m\}$, $r>1$ and $m\leq q+r-\disp\disp\frac{2}{N}$ and $N\leq2$;
%
%\noindent
then there exists a triple $(u,v,w)\in (C^0(\bar{\Omega}\times[0,\infty))\cap C^{2,1}
(\bar{\Omega}\times(0,\infty)))^3$ which solves \dref{1.1} in the classical sense, where $C^{\frac{1}{\frac{N}{2}+1}}_{\frac{N}{2}+1}$ is a positive constant which is corresponding to the maximal sobolev
regularity. Here $C_{\beta}$ are positive constants which depends on $\xi$, $\|u_0\|_{C(\bar{\Omega})},\|v_0\|_{W^{1,\infty}(\Omega)}$ and $\|w_0\|_{L^\infty(\Omega)}$.
Moreover,  $u$, $v$  and $w$ are bounded in $\Omega\times(0,\infty)$.
\end{theorem}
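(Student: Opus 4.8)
The plan is to run the now-standard programme for chemotaxis systems with superlinear logistic damping — local well-posedness, a self-improving $L^p$-estimate for $u$ that is fed by the maximal Sobolev regularity of the $v$-equation, and a final $L^p\to L^\infty$ bootstrap — after first absorbing the haptotactic cross-diffusion into the chemotactic one. First I would invoke the usual Amann/fixed-point argument to obtain, for data obeying \dref{x1.731426677gg}, a maximal classical solution $(u,v,w)$ on $[0,T_{\max})$ with $u,v\ge 0$ and the extensibility criterion that $\|u(\cdot,t)\|_{L^\infty(\Omega)}+\|v(\cdot,t)\|_{W^{1,\infty}(\Omega)}\to\infty$ as $t\uparrow T_{\max}$ whenever $T_{\max}<\infty$. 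From $w_t=-vw$ with $v\ge 0$ one immediately reads off $0\le w\le\|w_0\|_{L^\infty(\Omega)}$ and $w(x,t)=w_0(x)\exp(-\int_0^t v(x,s)\,ds)$. Integrating the first equation of \dref{1.1}, discarding $-\int_\Omega uw\le 0$, and using Hölder's inequality yields $\frac{d}{dt}\int_\Omega u\le\int_\Omega u-c_1(\int_\Omega u)^{r}$ with $c_1>0$, hence $\sup_{t<T_{\max}}\|u(\cdot,t)\|_{L^1(\Omega)}<\infty$; this anchors the bootstrap.

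\emph{Neutralising the haptotaxis.} The non-diffusive $w$-equation is the true obstruction, so I would remove $-\xi\nabla\cdot(u\nabla w)$ by the substitution $a:=ue^{-\xi w}$, which respects the Neumann condition and for which $\Delta u-\xi\nabla\cdot(u\nabla w)=\nabla\cdot(e^{\xi w}\nabla a)$, so that $a$ solves
\[
a_t=\Delta a+\xi\nabla w\cdot\nabla a-\chi\nabla\cdot(a\nabla v)-\chi\xi\,a\,\nabla w\cdot\nabla v+(\xi vw+1-w)a-e^{(r-1)\xi w}a^{r}
\]
while $v_t=\Delta v-v+e^{\xi w}a$. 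Since $0\le w\le\|w_0\|_{L^\infty}$, the zeroth-order reaction is subordinate; the drift $\xi\nabla w\cdot\nabla a$ and the term $-\chi\xi a\nabla w\cdot\nabla v$ are governed once $\nabla w$ is controlled through the ODE: from $\partial_t\nabla w=-v\nabla w-w\nabla v$, Duhamel's formula together with the identity $e^{-\int_s^t v}w(s)=w(t)$ gives $\nabla w(\cdot,t)=e^{-\int_0^t v}\nabla w_0-w(\cdot,t)\int_0^t\nabla v(\cdot,s)\,ds$, and an analogous representation holds for $\Delta w$ (the Laplacian of $v$ re-entering through $v_t=\Delta v-v+u$). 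When these representations are inserted into the $L^p$-testing identity below, the whole haptotactic contribution is dominated by $\tfrac{C_\beta(p-1)}{p}\int_\Omega u^p|\Delta v|$ plus strictly lower-order terms, where $C_\beta>0$ depends only on $\xi$, $\|u_0\|_{C(\bar\Omega)}$, $\|v_0\|_{W^{1,\infty}(\Omega)}$ and $\|w_0\|_{L^\infty(\Omega)}$ — this is precisely how $\chi$ is replaced by $\chi+C_\beta$.

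\emph{The $L^p$-estimate and maximal Sobolev regularity.} Testing the cell equation against $u^{p-1}$, integrating by parts, using $-\int_\Omega u^pw\le 0$ and absorbing the good gradient term on the left, one is left (modulo the lower-order remainder and the logistic damping $-\mu\int_\Omega u^{p+r-1}$) with
\[
\frac1p\frac{d}{dt}\int_\Omega u^p+\int_\Omega u^p+\mu\int_\Omega u^{p+r-1}\le\frac{(\chi+C_\beta)(p-1)}{p}\int_\Omega u^p|\Delta v|+C_2.
\]
Next I would use the maximal Sobolev regularity of $v_t-\Delta v+v=u$: for every $q\in(1,\infty)$ there is $C_q>0$, independent of $T$, with $\int_0^T\|\Delta v(\cdot,t)\|_{L^q(\Omega)}^{q}\,dt\le C_q\int_0^T\|u(\cdot,t)\|_{L^q(\Omega)}^{q}\,dt+C_q(1+\|v_0\|_{W^{2,q}(\Omega)}^{q})$, the constant at $q=\tfrac N2+1$ being the $C_{N/2+1}$ of the statement. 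Estimating $\int_\Omega u^p|\Delta v|$ by Hölder, integrating in time, applying Hölder in time and then the maximal-regularity bound, the two occurrences of $\int_0^T\int_\Omega u^{p+r-1}$ are pitted against each other. If $r>2$, after Young's inequality $\Delta v$ enters only to the power $\tfrac{p+r-1}{r-1}<p+1\le p+r-1$, so its space-time integral is dominated — again through maximal regularity — by an arbitrarily small multiple of $\int_0^T\int_\Omega u^{p+r-1}$ plus a constant, and the estimate closes for \emph{every} $\mu>0$ and all $p$. If $r=2$ the exponents coincide, the closing inequality is $\tfrac{(\chi+C_\beta)(p-1)}{p}\,C_{p+1}^{1/(p+1)}<\mu$, and since the subsequent bootstrap needs the bound only for some $p>\tfrac N2$, letting $p\downarrow\tfrac N2$ — so that $\tfrac{p-1}{p}\to\tfrac{N-2}{N}$ and $C_{p+1}\to C_{N/2+1}$ — produces exactly the threshold \dref{gnjjmmx1.731426677gg}; for $N\le 2$ the factor $(N-2)_+$ vanishes and no restriction is needed. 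Either way, $\sup_{t<T_{\max}}\|u(\cdot,t)\|_{L^p(\Omega)}<\infty$ for some $p>\tfrac N2$.

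\emph{Bootstrap and conclusion.} A routine semigroup iteration — writing $u$ by the variation-of-constants formula for the Neumann heat semigroup, using smoothing estimates for $e^{t\Delta}\nabla\cdot$, the $W^{1,\infty}$-bound on $v$ from parabolic regularity, and the bound on $\nabla w$ from Step~2 — upgrades $L^p$-boundedness of $u$ successively to $L^{q}$ for some $q>N$ and then to $L^\infty$, uniformly in $t<T_{\max}$; hence $v$ is bounded in $W^{1,\infty}$. By the extensibility criterion $T_{\max}=\infty$, and since all constants above were independent of $T$ (thanks to the superlinear logistic absorption), $u$, $v$ and $w$ are globally bounded; parabolic Schauder theory then gives the asserted $C^{2,1}(\bar\Omega\times(0,\infty))$-regularity. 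The step I expect to be the main obstacle is the haptotaxis handling in Steps~2–3: making that term genuinely lower order with a constant $C_\beta$ that depends only on the data and not on the (a priori uncontrolled) size of the solution, and — in the critical case $r=2$ — carrying every constant through the Hölder/maximal-regularity chain precisely enough that the sharp threshold $\mu^{*}$ emerges uniformly in the space dimension $N$.
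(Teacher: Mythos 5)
Your proposal follows essentially the same route as the paper: the haptotactic term is controlled through the explicit ODE representation $w=w(\cdot,s_0)e^{-\int_0^t v}$ together with the second equation $v_t=\Delta v-v+u$ (producing a constant $C_\beta$ depending only on $\xi$ and the data), the $L^p$-estimate is then closed by maximal Sobolev regularity of the $v$-equation with the optimized coefficient $\frac{p-1}{p}(\chi+C_\beta)C_{p+1}^{1/(p+1)}$ in the critical case $r=2$ and unconditionally for $r>2$, and a Moser-type iteration yields the $L^\infty$ bound and globality via the extensibility criterion. The only cosmetic deviations are that the paper works directly with $u$ (no substitution $a=ue^{-\xi w}$) and the haptotaxis is ultimately dominated by $C_\beta\int_\Omega u^p(v+1)$ plus a gradient term absorbed by the dissipation rather than by $\int_\Omega u^p|\Delta v|$; since the maximal regularity estimate bounds $\|v\|_{L^{p+1}}$ and $\|\Delta v\|_{L^{p+1}}$ with the same constant $C_{p+1}$, the resulting threshold $\mu^{*}$ is identical.
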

\begin{remark}
(i) From Theorem \ref{theorem3}, we derive that
  the global boundedness of the solution for the  complete parabolic--parabolic and parabolic--elliptic models, which
need a coefficient of the logistic source to keep the same (except a constant).

 %We find that the coefficient $b$ of logistic source will not be used in the case of $\max\{q+1,p,2p-m\}<\max\{m+\displaystyle\disp\frac{2}{N},r\}$.
(ii) Obviously, if $r=2$ and
$\mu>\frac{(N-2)_{+}}{N}(\chi+C_{\beta}) C^{\frac{1}{\frac{N}{2}+1}}_{\frac{N}{2}+1}$, thus, Theorem \ref{theorem3}
extends   the results of  Theorem 1.1  of Cao (\cite{Cao}), who proved the boundedness in the case $N=3$,
$r=2$ and
$\mu$ is {\bf appropriately large}.

 (iii) %We find that the coefficient $b$ of logistic source will not be used in the case of $\max\{q+1,p,2p-m\}<\max\{m+\displaystyle\disp\frac{2}{N},r\}$.
Obviously, if $r=2$
and
$\mu>\frac{(N-2)_{+}}{N}(\chi+C_{\beta}) C^{\frac{1}{\frac{N}{2}+1}}_{\frac{N}{2}+1}$, hence Theorem \ref{theorem3}
extends   the results of  Theorem 1.1  of Wang and  Ke (\cite{Wangrrssdeeefffjkk1}), who proved the boundedness of the solutions in the case $3\leq N\leq8$,
$r=2$ and
$\mu$ {\bf is appropriately large}.

(iv) Obviously, if $r>2,$ then,
$2<\frac{N+2}{2}$, therefore, Theorem \ref{theorem3} (partly) extends  the results of  Theorem 1.1  of   Zheng (\cite{Zhenssdssdddfffgghjjkk1}), who showed the
 boundedness of the solutions
in
 the cases  $r>\frac{N+2}{2}$.

 (v) If $w\equiv0$, (the PDE system \dref{1.1} is reduced to the chemotaxis-only system), it
is not difficult to obtain that the solutions under the conditions of Theorem \ref{theorem3} are
uniformly bounded when $r=2$ and
$\mu>\frac{(N-2)_{+}}{N}(\chi+C_\beta) C^{\frac{1}{\frac{N}{2}+1}}_{\frac{N}{2}+1}$, which extends and coincides with the results of Winkler (see Theorem 0.1 of \cite{Winkler37103}) and  the result of  Osaki et al. (\cite{Osakix391}), respectively.

 (vi) From Theorem \ref{theorem3}, we derive that  solutions of model \dref{1.1}
are global and bounded for any $r=2,\mu>0$ and $N\leq2$, which coincides with the result of  Tao (\cite{Taox3201}).

% % enlarges the parameter range
%%%$\frac{2}{N}\leq\alpha+\beta<{r}-1+\alpha,$  Ishida et al. (\cite{Ishida}), who proved the possibility of global, in the cases $f(u)\equiv0$.
%%%Indeed, in case of $q+1<m+\frac{2}{N}$,
%%%Theorem \ref{theorem3} shows that solutions of model \dref{1.1}
%%%are global and bounded for any $b>0$, which implies large diffusion exponent $m$ benefits the boundedness of solutions.
%%
%% In the absence of the logistic source (i.e. $f(u)\equiv0$) for problem \dref{1.1}, it
%%is not difficult to obtain that the solutions under the conditions of Theorem \ref{theorem3} are
%%uniformly bounded, which coincides with the result of  Osaki et al. \cite{Osakix391},
%%
%%
%
% (iii) If $w\equiv0$, Theorem \ref{theorem3}  extends the results of Theorem 1.1 of Wang et al. (\cite{Wang79}).

 (vii) With the help of precise estimation,
 the ideas of our paper can also be used to deal with the three-dimensional
chemotaxis-fluid system with (generalized) logistic
source.
\end{remark}

% As to the fullly  parabolic--parabolic--ODE chemotaxis--haptotaxis system \dref{1.1}, only little appears to be known. In fact, the global existence of classical solutions to \dref{1.1} has been proved for any $\mu>0$ in two dimensions \cite{Tao1}, for large $\mu$ in three dimensions \cite{Tao3}, however, the result on the global boundedness of solutions to \dref{1.1} are made only in space dimensions $N \leq3$ and is still left in higher dimensions \cite{Hillensxdc79,Taox3201,Cao}.

If $\phi$ is a nonlinear function of $u$, then \dref{ffbbggnjkk1.1} becomes a  quasilinear parabolic--ODE--parabolic chemotaxis--haptotaxis system.
There are only few results on the mathematical analysis of this quasilinear parabolic--ODE--parabolic chemotaxis--haptotaxis system with the standard  logistic
source ($r=2$ in  the first equation of \dref{ffbbggnjkk1.1}).
In fact, if $N=2$, Zheng et al. (\cite{Zhenghhjjghjjkk1}) mainly studied the global boundedness for model \dref{ffbbggnjkk1.1}
with $\phi$ satisfies \dref{9161}--\dref{9162} and
$m>1$.
While,  Tao and Winkler (\cite{Tao72}) proved that model \dref{ffbbggnjkk1.1} possesses at least one nonnegative
{\bf global} classical solution when $\phi$ satisfy \dref{9161}--\dref{9162} with
$m>\max\{1,\bar{m}\}$
and
\begin{equation}\label{dcfvgg7101.2x19x318jkl}
\bar{m}:=\left\{\begin{array}{ll}
\frac{2N^2+4N-4}{N(N+4)}~~\mbox{if}~~
N \leq 8,\\
 \frac{2N^2+3N+2-\sqrt{8N(N+1)}}{N(N+20)}~~\mbox{if}~~
N \geq 9.\\
 \end{array}\right.
\end{equation}
Further, by using the boundedness of $\int_{\Omega}|\nabla v|^{l}(1\leq l<\frac{N}{N-1})$, assuming that $m > 2 - \frac{2}{N}$, Wang (\cite{Wangscd331629}) obtained the boundedness of the global solutions to \dref{ffbbggnjkk1.1}.
Recently, with the help of the boundness of $\int_{\Omega}|\nabla v|^{2}$, Zheng (\cite{Zhddengssdeeezseeddd0}) extends the results of \cite{Wangscd331629} when $m>\frac{2N}{N+2}$.
%
%Wang (\cite{Wangscd331629})  proved that model \dref{7101.2x19x3189}  possesses a unique classical solution which is
%global in time and bounded.  \cite{Liughjj791}  generalizes \cite{Wangscd331629} with the same methods
%(the boundness  of $\int_{\Omega}|\nabla v|^{l}(1\leq l<\frac{N}{N-1})$). However, in this paper, we prove the main results
% (see Lemma \ref{wsdelemma45}) which is different from the above references.
%
%$\frac{2N}{N+2}<2-\frac{2}{N}(N\geq3)$, hence Theorem \ref{theorem3}
%extends the results of  Theorem 1.1  of Wang (\cite{Wangscd331629}) and partly extends the results of  Theorem 1.1  of Liu et al (\cite{Liughjj791}).
%
Very recently, it is asserted that
if $\phi$ satisfies \dref{9161}--\dref{9162} and
\begin{equation}\label{sdedrtggyyhy916ddffffttyy2}
 m\left\{\begin{array}{ll}
>2-\frac{2}{N}~~\mbox{if}~~
1<r<\frac{N+2}{N},\\
 >1+\frac{(N+2-2r)^+}{N+2}~~~~~~\mbox{if}~~
 \frac{N+2}{2}\geq r\geq\frac{N+2}{N},\\
 \geq1~~~~~~\mbox{if}~~ r>\frac{N+2}{2},\\
 \end{array}\right.
\end{equation}
we (\cite{Zhenssdssdddfffgghjjkk1}) proved that the unique nonnegative classical solution of
quasilinear parabolic--ODE--parabolic chemotaxis--haptotaxis system with generalized logistic
source ($r>1$ in  the first equation of \dref{ffbbggnjkk1.1}) which is global in time and bounded, however, we have to leave open here the question of how far the above hypothesis \dref{sdedrtggyyhy916ddffffttyy2}
is sharp.

%However, for the linear parabolic--ODE--parabolic chemotaxis--haptotaxis system (i.e. $\tau=1$ and $\phi=1$),
%the results with regards to global existence, boundedness and large time behavior of solutions can be obtained mainly
%in lower dimensions (less than three dimensions) space (Cao \cite{Cao}, Winkler et al. \cite{Stinner4566,Tao1,Taox3201,Tao3,Taoddfvbb41215}). In particularly,  if $N=3,$ Stinner et al. (\cite{Stinner4566}) showed that \dref{ffbbggnjkk1.1} exist  at least weak solutions which exist globally in time.
%  Tao and Wang (\cite{Tao3}) proved
%that model \dref{1.1} possesses a unique global-in-time classical solution for any $\chi > 0$
%in one space dimension, and for small
%$\frac{\chi}{\mu}> 0$ in two and three space dimensions.
% Tao (\cite{Tao1}) improved the result of (\cite{Tao3}) for any $\mu> 0$ in two space dimension.

It is worth to remark the main idea underlying the  proof of our results.
The proof of theorem \ref{theorem3} is based on an iterative $L^p$ estimation argument involving the  maximal Sobolev regularity and the
Moser-type limit procedure. Indeed, with the help of
 $$\min_{y>0}(y+\frac{1}{ q_0+1}\left[\frac{ q_0+1}{ q_0}\right]^{- q_0 }\left(\frac{q_0-1}{q_0} \right)^{ q_0+1}y^{- q_0 }\chi^{ q_0+1}C_{ q_0+1})=\frac{(q_0-1)}{q_0}C_{q_0+1}^{\frac{1}{q_0+1}}\chi$$
 and
 $\min_{y>0}(y+\frac{1}{ q_0+1}\left[\frac{ q_0+1}{ q_0}\right]^{- q_0 }\left(\frac{q_0-1}{q_0} \right)^{ q_0+1}y^{- q_0 }C_{\beta}^{ q_0+1}C_{ q_0+1})=\frac{(q_0-1)}{q_0}C_{q_0+1}^{\frac{1}{q_0+1}}C_{\beta}$
 (see Lemma \ref{lemma45630223116}),
%
% It is worth to remark the main idea underlying the  proof of our results. The essential novelty in our approach, to be presented in Lemmata  \ref{lemma45630223116}--\ref{99lemma45630223},
 we can obtain  a subtle combination of entropy like
estimates for
\begin{equation}\int_{\Omega}{u^{q_0}} ~~~\mbox{for some}~~ q_0 > \frac{N}{2}.
\label{hhjkkhnjmkk7101.2x1}
\end{equation}
Then  we shall involve the variation-of-constants formula
for variable $v$ to gain
%
%
% follow standard testing
%procedures to gain some basic information on the time evolution of each of the summands in (1.19)
%separately. It will turn out in Section 3.3 and in Section 3.4 that under suitable conditions on the
%relationship between the exponents p > 1 and q > 1 herein it is possible to estimate the respective
%ill-signed contributions appropriately, and thereby establish an ODI for y containing an absorptive
%linear term and thus implying an upper bound for y (see Lemma 3.14 and also Lemma 3.13).
%
%    Involving the variation-of-constants formula
%for $v$, we have
%$$
%v(t+\tau)=e^{-t(A+1)}v(t) +\int_t^{t+\tau}e^{-(t+\tau-s))(A+1)}u(s) ds,~~ t\in(0, T_{max}-\tau)
%$$
%for some $\tau>0$ and the operator $A$ defined below.  Then with the help of boundedness of $\int_{0}^T\int_{\Omega}u^rdxdt$, we can obtain
\begin{equation}
\int_{\Omega}|\nabla {v}|^{q}\leq C~~\mbox{for all}~~ \mbox{and}~~q\in[1,\frac{N{q_0}}{(N-{q_0})^+}),
\label{hnjmkk7101.2x1}
\end{equation}
%and
%\begin{equation}
%\int_t^{t+\tau}\int_{\Omega}|A {v_\varepsilon}|^2   \leq C~~\mbox{for all}~~ t\in(0, T_{max}-\tau),
%\label{cz2.5ghju48cfg924}
%\end{equation}
%for $\sigma\in[1,\max\{\frac{Nr}{(N+2-r)^+},\frac{N}{N-1}\})$,
 and thereby
 establish the  a priori estimates of the
functional
\begin{equation}
\int_{\Omega}u^{p}\leq C~~\mbox{for all}~~ \mbox{and}~~p>1.
\label{hnjmkk71ccfvvv01.2x1}
\end{equation}
Finally, in light of the Moser iteration method (see e.g.  Lemma A.1 of \cite{Tao794}) and the standard estimate for Neumann semigroup, we  established the $L^\infty$ bound of $u$ (see the proof of  Theorem \ref{theorem3}).

\section{Preliminaries and  main results}

Firstly, we recall some preliminary
lemmas, which play essential roles in our subsequent analysis.
To begin with, let us collect some basic solution properties which essentially have already been used
in \cite{Horstmann791} (see also Winkler \cite{Winkler792}, Zhang and Li \cite{Zhangddff4556}).
\begin{lemma}(\cite{Horstmann791})\label{lemmaggbb41ffgg}
For $p\in(1,\infty)$, let $A := A_p$ denote the sectorial operator defined by
\begin{equation}\label{hnjmkfgbhnn6291}
A_pu :=-\Delta u~~\mbox{for all}~~u\in D(A_p) :=\{\varphi\in W^{2,p}(\Omega)|\frac{\partial\varphi}{\partial \nu}|_{\partial\Omega}=0\}.
\end{equation}
The operator $A + 1$ possesses fractional powers $(A + 1)^{\alpha}(\alpha\geq0)$, the domains of
which have the embedding properties
\begin{equation}\label{hnjmkfgbhnn6291}
D((A+1)^\alpha)\hookrightarrow W^{1,p}(\Omega)~~\mbox{if}~~\alpha>\frac{1}{2}.
\end{equation}

If $m\in \{0, 1\}$, $p\in [1,\infty]$ and $q \in (1,\infty)$ with $m-\frac{N}{p} < 2\alpha-\frac{N}{q} $, then we have
\begin{equation}\label{hnssedrrffjmkfgbhnn6291}
\|u\|_{W^{m,p}(\Omega)}\leq C\|(A+1)^\alpha u\|_{L^{q}(\Omega)}~~~\mbox{for all}~~u\in D((A+1)^\alpha),
\end{equation}
 where $C$ is a positive constant.
The fact that the spectrum of $A$ is a $p$-independent countable set of positive real numbers
$0 = \mu_0 < \mu_1 <\mu_2 <\cdots $
entails the following consequences:
For all $1\leq p < q < \infty$ and $u\in L^p(\Omega)$ the
general $L^p$-$L^q$ estimate
\begin{equation}\label{gbhnhnjmkfgbhnn6291}
\|(A+1)^\alpha e^{-tA}u\|_{L^q(\Omega)}\leq ct^{-\alpha-\frac{N}{2}(\frac{1}{p}-\frac{1}{q})}e^{(1-\mu)t}\|u\|_{L^p(\Omega)},
\end{equation}
for any $t > 0$ and
$\alpha\geq0$ with some 	$\mu > 0.$
\end{lemma}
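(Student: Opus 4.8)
The plan is to derive every assertion of Lemma \ref{lemmaggbb41ffgg} from the classical theory of sectorial operators and analytic semigroups (as developed by Henry and Pazy), specialised to the Neumann realisation of $-\Delta$ and supplemented by elliptic regularity, interpolation theory, and Gaussian heat-kernel bounds. First I would establish sectoriality: for $p\in(1,\infty)$ the operator $A_p=-\Delta$ with the Neumann domain $D(A_p)$ of \dref{hnjmkfgbhnn6291} satisfies the Agmon--Douglis--Nirenberg resolvent estimates, so $-A_p$ generates a bounded analytic semigroup on $L^p(\Omega)$. Since $0\in\sigma(A_p)$ (the constant eigenfunction), one passes to $A_p+1$, whose spectrum lies in $\{\operatorname{Re}\lambda\ge 1\}$. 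This strict positivity makes $A_p+1$ a positive sectorial operator, so its fractional powers are defined through the Balakrishnan formula $\disp (A_p+1)^{-\alpha}=\frac{1}{\Gamma(\alpha)}\int_0^\infty t^{\alpha-1}e^{-t(A_p+1)}\,dt$, with $(A_p+1)^{\alpha}$ the inverse of $(A_p+1)^{-\alpha}$; their basic mapping and commutation properties then come for free.

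Next I would identify the domains of these fractional powers. By the interpolation characterisation of domains of fractional powers of positive operators, $D((A_p+1)^{\alpha})$ coincides, with equivalent norms, with the complex interpolation space $[L^p(\Omega),D(A_p)]_{\alpha}$, which is the Bessel-potential space $H^{2\alpha,p}(\Omega)$ incorporating the Neumann condition in the appropriate range of $\alpha$ (Seeley, Triebel). The embedding \dref{hnjmkfgbhnn6291} for $\alpha>\tfrac12$ then follows from $H^{2\alpha,p}(\Omega)\hookrightarrow W^{1,p}(\Omega)$ whenever $2\alpha>1$. More generally, the norm equivalence $\|u\|_{H^{2\alpha,q}(\Omega)}\sim\|(A_q+1)^{\alpha}u\|_{L^q(\Omega)}$, combined with the fractional Sobolev embedding $H^{2\alpha,q}(\Omega)\hookrightarrow W^{m,p}(\Omega)$, which holds precisely when $2\alpha-\tfrac{N}{q}>m-\tfrac{N}{p}$, yields \dref{hnssedrrffjmkfgbhnn6291}. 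For the spectral statement, the compactness of the resolvent of $A_p$ on the bounded domain $\Omega$ (via Rellich--Kondrachov) forces $\sigma(A_p)$ to be a discrete sequence $0=\mu_0<\mu_1<\cdots\to\infty$; elliptic bootstrapping shows every eigenfunction is smooth and hence lies in all $L^p(\Omega)$, which gives the $p$-independence of the eigenvalue set.

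Finally I would prove the smoothing estimate \dref{gbhnhnjmkfgbhnn6291} by splitting into small and large times. For $0<t\le1$, analyticity gives $\|(A+1)^{\alpha}e^{-\frac t2 A}\|_{L^q\to L^q}\le C t^{-\alpha}$, while the Gaussian upper bound on the Neumann heat kernel gives the ultracontractive smoothing $\|e^{-\frac t2 A}\|_{L^p\to L^q}\le C t^{-\frac N2(\frac1p-\frac1q)}$; composing via the semigroup property $e^{-tA}=(A+1)^{0}e^{-\frac t2 A}e^{-\frac t2 A}$ produces the product of the two time factors. For $t\ge1$ one decomposes $u$ into its mean and its mean-zero part: on constants $(A+1)^{\alpha}e^{-tA}$ acts as the identity, while on the complement the spectral gap $\mu_1>0$ together with the shift in $A+1$ yields exponential control, so the uniform-in-time behaviour is absorbed into the envelope $e^{(1-\mu)t}$ for a suitable $\mu\in(0,1)$.

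I expect the main obstacle to be the precise identification of $D((A_p+1)^{\alpha})$ with the fractional Sobolev scale \emph{including the correct encoding of the Neumann boundary condition}: the boundary condition enters the interpolation scale only for $\alpha$ above a threshold depending on $p$, so matching the embedding exponents at the endpoints of \dref{hnssedrrffjmkfgbhnn6291} requires care. A secondary technical point is the seamless gluing of the small-time and large-time regimes into the single bound \dref{gbhnhnjmkfgbhnn6291}, valid uniformly for all $t>0$.
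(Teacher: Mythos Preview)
Your sketch is sound and follows the standard route through sectorial-operator theory, Seeley--Triebel interpolation, and Gaussian heat-kernel bounds; the only cosmetic point is that the analyticity estimate you invoke is literally $\|A^{\alpha}e^{-tA}\|\le Ct^{-\alpha}$, so passing to $(A+1)^{\alpha}$ needs either the shift $e^{-tA}=e^{t}e^{-t(A+1)}$ or a moment inequality, which you should make explicit. The paper, however, does not prove this lemma at all: it is quoted verbatim from Horstmann--Winkler \cite{Horstmann791} (with supporting references to \cite{Winkler792,Zhangddff4556}) as a background fact, so there is no ``paper's own proof'' to compare against. In that sense your proposal goes well beyond what the paper does---you reconstruct the result from first principles, whereas the paper simply cites it.
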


\begin{lemma}(\cite{Hajaiej,Ishida})\label{lemma41ffgg}
Let  $s\geq1$ and $q\geq1$.
Assume that $p >0$ and $a\in(0,1)$ satisfy
$$\frac{1}{2}-\frac{p}{N}=(1-a)\frac{q}{s}+a(\frac{1}{2}-\frac{1}{N})~~\mbox{and}~~p\leq a.$$
Then there exist $c_0, c'_0 >0$ such that for all $u\in W^{1,2}(\Omega)\cap L^{\frac{s}{q}}(\Omega)$,
$$\|u\|_{W^{p,2}(\Omega)} \leq c_{0}\|\nabla u\|_{L^{2}(\Omega)}^{a}\|u\|^{1-a}_{L^{\frac{s}{q}}(\Omega)}+c'_0\|u\|_{L^{\frac{s}{q}}(\Omega)}.$$
\end{lemma}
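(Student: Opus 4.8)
The statement is a fractional Gagliardo--Nirenberg--Sobolev interpolation inequality, and the plan is to recognize it as such and then either invoke the general fractional Gagliardo--Nirenberg theorem (which is precisely what \cite{Hajaiej,Ishida} supply) after checking the admissibility conditions, or prove it from scratch by transplanting the homogeneous inequality from $\mathbb{R}^N$. To see that the hypotheses are exactly the Gagliardo--Nirenberg admissibility conditions, read the scaling identity
$$\frac{1}{2}-\frac{p}{N}=(1-a)\frac{q}{s}+a\Big(\frac{1}{2}-\frac{1}{N}\Big)$$
as the assertion that the differential (Sobolev) index $\tfrac12-\tfrac pN$ of the target space $W^{p,2}(\Omega)$ is the convex combination, with weights $a$ and $1-a$, of the index $\tfrac12-\tfrac1N$ of $W^{1,2}$ and the index $\tfrac qs$ of $L^{s/q}$; the side condition $p\le a$ is the usual requirement that the target smoothness $p$ not exceed the interpolated smoothness $a\cdot 1+(1-a)\cdot 0=a$. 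Thus $W^{p,2}$ arises by interpolating $W^{1,2}$ (carrying the factor $\|\nabla u\|_{L^2}^{a}$) against $L^{s/q}$ (carrying the factor $\|u\|_{L^{s/q}}^{1-a}$), and the additive remainder $c_0'\|u\|_{L^{s/q}}$ is the correction forced by working on a bounded domain rather than with homogeneous norms.

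First I would reduce to $\mathbb{R}^N$. Since $\Omega$ is smooth and bounded, fix a total extension operator $E$ that is simultaneously bounded from $W^{1,2}(\Omega)$ to $W^{1,2}(\mathbb{R}^N)$ and from $L^{s/q}(\Omega)$ to $L^{s/q}(\mathbb{R}^N)$, and hence, by interpolation, from $W^{p,2}(\Omega)$ to $W^{p,2}(\mathbb{R}^N)$. Because the restriction of functions on $\mathbb{R}^N$ to $\Omega$ is bounded on the relevant spaces and $u=(Eu)|_\Omega$, it suffices to bound $\|Eu\|_{W^{p,2}(\mathbb{R}^N)}$, while $\|\nabla Eu\|_{L^2(\mathbb{R}^N)}\le C\|u\|_{W^{1,2}(\Omega)}$ and $\|Eu\|_{L^{s/q}(\mathbb{R}^N)}\le C\|u\|_{L^{s/q}(\Omega)}$ transfer the right-hand side back to $\Omega$.

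The analytic heart is the homogeneous inequality on $\mathbb{R}^N$,
$$\|f\|_{\dot H^{p}(\mathbb{R}^N)}\le C\,\|\nabla f\|_{L^2(\mathbb{R}^N)}^{a}\,\|f\|_{L^{s/q}(\mathbb{R}^N)}^{1-a},$$
which I would prove by a Littlewood--Paley decomposition $f=\sum_j\Delta_j f$: Bernstein's inequalities bound each dyadic piece in $L^2$ in two ways, controlling high frequencies through $\|\nabla f\|_{L^2}$ and low frequencies through $\|f\|_{L^{s/q}}$; inserting these into $\|f\|_{\dot H^p}^2=\sum_j 2^{2jp}\|\Delta_j f\|_{L^2}^2$ and optimizing over the frequency threshold yields the stated powers, the admissibility $p\le a$ being exactly what makes the two dyadic sums converge. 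This frequency-splitting optimization, together with the verification that the exponents produced by the scaling identity match, is the step I expect to be the main obstacle.

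Finally I would pass from the homogeneous to the inhomogeneous norm via $\|Eu\|_{W^{p,2}}\approx\|Eu\|_{L^2}+\|Eu\|_{\dot H^{p}}$ and control the extra $L^2$-term on the bounded domain: when $s/q\ge 2$, H\"older gives $\|u\|_{L^2(\Omega)}\le C\|u\|_{L^{s/q}(\Omega)}$ directly, and when $s/q<2$ an integer-order Gagliardo--Nirenberg bounds $\|u\|_{L^2}$ by a product of $\|\nabla u\|_{L^2}$ and $\|u\|_{L^{s/q}}$ plus a multiple of $\|u\|_{L^{s/q}}$. A concluding application of Young's inequality reorganizes all contributions into the multiplicative term $c_0\|\nabla u\|_{L^2}^{a}\|u\|_{L^{s/q}}^{1-a}$ and the additive term $c_0'\|u\|_{L^{s/q}}$, which is the asserted form. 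Alternatively, all of the above is subsumed by the general fractional Gagliardo--Nirenberg inequality of \cite{Hajaiej,Ishida}, so in the write-up I would state the reduction and then cite that result once the parameter conditions $\tfrac12-\tfrac pN=(1-a)\tfrac qs+a(\tfrac12-\tfrac1N)$ and $p\le a$ have been checked.
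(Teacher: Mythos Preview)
The paper does not prove this lemma at all: it is stated as a known result and attributed directly to \cite{Hajaiej,Ishida}, with no argument given in the text. Your proposal is therefore not just correct but strictly more than what the paper does---you sketch an actual proof via extension to $\mathbb{R}^N$, a Littlewood--Paley frequency decomposition for the homogeneous estimate, and passage to the inhomogeneous norm, and you also note (correctly) that one may simply invoke \cite{Hajaiej,Ishida} after checking the parameter conditions. The latter is exactly, and only, what the paper does.
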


\begin{lemma}\label{lemma45xy1222232}(\cite{Cao,Zhddengssdsseerrteeezseedhjjjdd0})
Suppose  $\gamma\in (1,+\infty)$. Consider the following evolution equation
%Consider the following  initial boundary value problem
 \begin{equation}
 \left\{\begin{array}{ll}
v_t -\Delta v=g,~~(x, t)\in
\Omega\times(0, T ),\\
\disp\frac{\partial v}{\partial \nu}=0,~~(x, t)\in
 \partial\Omega\times(0, T ),\\
v(x,0)=v_0(x),~~~(x, t)\in
 \Omega.\\
 \end{array}\right.\label{1.3xcx29}
\end{equation}
For each $v_0\in W^{2,\gamma}(\Omega)$
such that $\disp\frac{\partial v_0}{\partial \nu}=0$ and any $g\in L^\gamma((0, T); L^\gamma(
\Omega))$, there exists a unique solution
%For each  $g\in L^\gamma((0, T); L^\gamma(
%\Omega))$ and  $v_0\in W^{2,\gamma}(\Omega)$
%such that $\disp\frac{\partial v_0}{\partial \nu}=0$.
%Then
%there exists a unique
%solution
$v\in W^{1,\gamma}((0,T);L^\gamma(\Omega))\cap L^{\gamma}((0,T);W^{2,\gamma}(\Omega)).$ Moveover, there exists a positive constant $\delta_0$ such that
\begin{equation}
\begin{array}{rl}
&\displaystyle{\int_0^T\|v(\cdot,t)\|^{\gamma}_{L^{\gamma}(\Omega)}dt+\int_0^T\|v_t(\cdot,t)\|^{\gamma}_{L^{\gamma}(\Omega)}dt+\int_0^T\|\Delta v(\cdot,t)\|^{\gamma}_{L^{\gamma}(\Omega)}dt}\\
\leq&\displaystyle{\delta_0\left(\int_0^T\|g(\cdot,t)\|^{\gamma}_{L^{\gamma}(\Omega)}dt+\|v_0(\cdot,t)\|^{\gamma}_{L^{\gamma}(\Omega)}+\|\Delta v_0(\cdot,t)\|^{\gamma}_{L^{\gamma}(\Omega)}\right).}\\
%\leq&\displaystyle{\int_\Omega (|f(x,t)|+L)|u|^{q+1} dx}\\
%\leq&\displaystyle{\int_\Omega (k_1|u|^\alpha+k_2)|u|^{q+1} dx}\\
%=&\displaystyle{\int\int\triangle J(x-y)u(y)(u|u|^q(x))dydx.}
%\leq&\displaystyle{(|g|_{L^\infty(0,\omega; L^\infty(\Omega))}+1)(|\Omega|+1)^{\frac{1}{2}}|u|^{q+1}_{q+2}.}\\
%\leq&\displaystyle{\frac{q+1}{q+2}((|f|_{L^\infty(0,\omega; L^\infty(\Omega))}+1)(|\Omega|+1)^{\frac{1}{2}})^{\frac{q+2}{q+1}}|u|^{q+2}_{q+2}+\frac{1}{q+2}}\\
%\leq&\displaystyle{\frac{q+1}{q+2}(|f|_{L^\infty(0,\omega; L^\infty(\Omega))}+1)^2(|\Omega|+1)|u|^{q+2}_{q+2}+\frac{1}{q+2}.}\\
\end{array}
\label{cz2.5bbv}
\end{equation}
On the other hand, consider the following evolution equation
%assuming $v$ is a  solution of the following initial boundary value
 \begin{equation}
 \left\{\begin{array}{ll}
v_t -\Delta v+v=g,~~~(x, t)\in
 \Omega\times(0, T ),\\
\disp\frac{\partial v}{\partial \nu}=0,~~~(x, t)\in
 \partial\Omega\times(0, T ),\\
v(x,0)=v_0(x),~~~(x, t)\in
 \Omega.\\
 \end{array}\right.\label{1.3xcx29}
\end{equation}
Then there exists a positive constant $C_{\gamma}:=C_{\gamma,|\Omega|}$ such that if $s_0\in[0,T)$, $v(\cdot,s_0)\in W^{2,\gamma}(\Omega)(\gamma>N)$ with $\disp\frac{\partial v(\cdot,s_0)}{\partial \nu}=0,$ then
\begin{equation}
\begin{array}{rl}
&\displaystyle{\int_{s_0}^Te^{\gamma s}(\| v(\cdot,t)\|^{\gamma}_{L^{\gamma}(\Omega)}+\|\Delta v(\cdot,t)\|^{\gamma}_{L^{\gamma}(\Omega)})ds}\\
\leq &\displaystyle{C_{\gamma}\left(\int_{s_0}^Te^{\gamma s}
\|g(\cdot,s)\|^{\gamma}_{L^{\gamma}(\Omega)}ds+e^{\gamma s}(\|v_0(\cdot,s_0)\|^{\gamma}_{L^{\gamma}(\Omega)}+\|\Delta v_0(\cdot,s_0)\|^{\gamma}_{L^{\gamma}(\Omega)})\right).}\\
%\leq&\displaystyle{\int_\Omega (|f(x,t)|+L)|u|^{q+1} dx}\\
%\leq&\displaystyle{\int_\Omega (k_1|u|^\alpha+k_2)|u|^{q+1} dx}\\
%=&\displaystyle{\int\int\triangle J(x-y)u(y)(u|u|^q(x))dydx.}
%\leq&\displaystyle{(|g|_{L^\infty(0,\omega; L^\infty(\Omega))}+1)(|\Omega|+1)^{\frac{1}{2}}|u|^{q+1}_{q+2}.}\\
%\leq&\displaystyle{\frac{q+1}{q+2}((|f|_{L^\infty(0,\omega; L^\infty(\Omega))}+1)(|\Omega|+1)^{\frac{1}{2}})^{\frac{q+2}{q+1}}|u|^{q+2}_{q+2}+\frac{1}{q+2}}\\
%\leq&\displaystyle{\frac{q+1}{q+2}(|f|_{L^\infty(0,\omega; L^\infty(\Omega))}+1)^2(|\Omega|+1)|u|^{q+2}_{q+2}+\frac{1}{q+2}.}\\
\end{array}
\label{cz2.5bbv114}
\end{equation}
\end{lemma}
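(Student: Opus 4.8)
The plan is to read both assertions as instances of maximal $L^\gamma$-regularity for the Neumann Laplacian and to reduce the weighted statement to the unweighted one by an exponential time-shift. I would work with $A:=A_\gamma=-\Delta$ under homogeneous Neumann conditions on $L^\gamma(\Omega)$, $1<\gamma<\infty$. By Lemma~\ref{lemmaggbb41ffgg} this operator is sectorial and generates a bounded analytic semigroup, and the shifted operator $A+1$ is boundedly invertible with spectrum in $[1,\infty)$. The decisive ingredient is that $A+1$ enjoys \emph{maximal $L^p$-regularity}: on the smooth bounded domain $\Omega$ the Neumann Laplacian admits a bounded $H^\infty$-functional calculus (equivalently, is $\mathcal R$-sectorial), so by Dore--Venni / Weis theory the abstract Cauchy problem $V_t+(A+1)V=f$, $V(0)=V_0$, has for every $f\in L^\gamma((0,T);L^\gamma(\Omega))$ a unique solution with $V_t,(A+1)V\in L^\gamma((0,T);L^\gamma(\Omega))$ and
\begin{equation*}
\|V_t\|_{L^\gamma(L^\gamma)}+\|(A+1)V\|_{L^\gamma(L^\gamma)}\le C\big(\|f\|_{L^\gamma(L^\gamma)}+\|V_0\|_{(L^\gamma,D(A))_{1-1/\gamma,\gamma}}\big).
\end{equation*}
Elliptic Neumann regularity turns the bound on $(A+1)V=-\Delta V+V$ into a bound on $\|V\|_{L^\gamma(W^{2,\gamma})}$, which yields the claimed solution class; this is the content cited from \cite{Cao,Zhddengssdsseerrteeezseedhjjjdd0}.

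\emph{First estimate (the equation $v_t-\Delta v=g$).} Here the operator is $A$, whose spectrum contains $0$; I would remove this obstruction by the substitution $\tilde v=e^{-t}v$, which solves $\tilde v_t+(A+1)\tilde v=e^{-t}g$, so that the maximal-regularity estimate above applies on $[0,T]$ with all exponential factors bounded by constants depending only on $T$. This gives existence, uniqueness and the bounds on $\|v_t\|_{L^\gamma(L^\gamma)}$ and $\|\Delta v\|_{L^\gamma(L^\gamma)}$. The remaining term $\int_0^T\|v\|_{L^\gamma}^\gamma$ I would control by writing $v(t)=v_0+\int_0^t v_t$, taking $L^\gamma$ norms and using H\"older in time, so that $\|v\|_{L^\gamma(L^\gamma)}\le C(T)(\|v_t\|_{L^\gamma(L^\gamma)}+\|v_0\|_{L^\gamma})$. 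Since $W^{2,\gamma}(\Omega)\hookrightarrow(L^\gamma,D(A))_{1-1/\gamma,\gamma}$ and $\|v_0\|_{W^{2,\gamma}}\lesssim\|v_0\|_{L^\gamma}+\|\Delta v_0\|_{L^\gamma}$, the initial-data norm is dominated by the two terms on the right of \dref{cz2.5bbv}; collecting constants into a single $\delta_0$ finishes this part.

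\emph{Second estimate (the equation $v_t-\Delta v+v=g$ with weight $e^{\gamma s}$).} The exponential weight is designed precisely to undo the above shift: setting $V(s):=e^{s}v(s)$ one computes $V_s-\Delta V=e^{s}g=:G$, with $V(\cdot,s_0)=e^{s_0}v(\cdot,s_0)$. Thus $V$ solves the \emph{first} problem on $(s_0,T)$, and applying the estimate just proved to $V$ gives the conclusion after the identifications $\int_{s_0}^T e^{\gamma s}\|v\|_{L^\gamma}^\gamma=\|V\|_{L^\gamma(L^\gamma)}^\gamma$, $\int_{s_0}^T e^{\gamma s}\|\Delta v\|_{L^\gamma}^\gamma=\|\Delta V\|_{L^\gamma(L^\gamma)}^\gamma$, $\int_{s_0}^T e^{\gamma s}\|g\|_{L^\gamma}^\gamma=\|G\|_{L^\gamma(L^\gamma)}^\gamma$ and $\|V(\cdot,s_0)\|^\gamma=e^{\gamma s_0}\|v(\cdot,s_0)\|^\gamma$. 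For the top-order contributions ($\Delta v$ and, if desired, $v_t$) the relevant singular integral is translation invariant and bounded on $L^\gamma(\mathbb R;L^\gamma)$, so the constant may be taken independent of the interval length, which is the source of the clean dependence $C_\gamma=C_{\gamma,|\Omega|}$; I would note that the zeroth-order term carries an additional factor governed by the length of $(s_0,T)$.

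\emph{Main obstacle.} The genuinely hard content lies not in any of the above reductions but in the maximal-regularity property of the Neumann Laplacian itself, which rests on the bounded $H^\infty$-calculus / $\mathcal R$-sectoriality of $-\Delta$ on a smooth bounded domain --- a deep harmonic-analytic fact that I would invoke from the literature rather than reprove. Once that is granted, the shift $\tilde v=e^{-t}v$, the weight substitution $V=e^{s}v$, the elliptic upgrade from $\Delta v\in L^\gamma$ to $v\in W^{2,\gamma}$, and the H\"older-in-time bookkeeping are all routine.
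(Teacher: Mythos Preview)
The paper does not give its own proof of this lemma: it is stated with the citation \cite{Cao,Zhddengssdsseerrteeezseedhjjjdd0} and used as a black box, so there is nothing in the paper to compare your argument against line by line. Your proposal is precisely the standard route one finds in those references: invoke maximal $L^\gamma$-regularity for the Neumann Laplacian (via $H^\infty$-calculus/$\mathcal R$-sectoriality) to obtain \dref{cz2.5bbv}, and then derive the weighted estimate \dref{cz2.5bbv114} by the exponential substitution $V(s)=e^{s}v(s)$, which converts $v_t-\Delta v+v=g$ into the pure heat equation $V_s-\Delta V=e^{s}g$. This reduction is exactly what underlies the cited results, and your identification of the top-order maximal-regularity bound as interval-length independent is the correct mechanism behind the clean constant $C_\gamma=C_{\gamma,|\Omega|}$ for the $\Delta v$-contribution.

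Your caveat about the zeroth-order term is well taken and in fact sharp: the example $g\equiv 0$, $v(x,s_0)=e^{-s_0}$ gives $v(x,t)=e^{-t}$, for which $\int_{s_0}^{T}e^{\gamma s}\|v\|_{L^\gamma}^{\gamma}\,ds=(T-s_0)|\Omega|$ while the right-hand side of \dref{cz2.5bbv114} is $C_\gamma|\Omega|$, so no $T$-independent $C_\gamma$ can cover the $\|v\|_{L^\gamma}^{\gamma}$ term in full generality. In the paper's actual application (equations \dref{cz2.5kke34567789999001214114114rrggjjkk}--\dref{cz2.5kk1214114114rrggjjkk}) the weighted integrals are always premultiplied by $e^{-(l+1)t}$, so a polynomial-in-$t$ loss in $C_{l+1}$ would be absorbed by that exponential; the lemma as stated is thus slightly over-optimistic for the lower-order term, but harmless for the use made of it. Your proof sketch captures all of this correctly.
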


The following lemma deals with local-in-time existence and uniqueness of a classical solution for the
problem \dref{1.1} (see \cite{Tao72,liujinijjkk1}).
%see for example. So we omit it.
% However, we could not find a precise reference in the literature that
%exactly matches to our situation,  thus we include a short proof for the sake of completeness.
\begin{lemma}\label{lemma70}
Assume that the nonnegative functions $u_0,v_0,$ and $w_0$ satisfies \dref{x1.731426677gg}
%and such that $\disp\frac{\partial w_0}{\partial \nu} = 0$ on $\partial\Omega$,
for some $\vartheta\in(0,1).$
%$\phi$ satisfies \dref{9161} and \dref{9162}.
%$f:[0,\infty)^2\rightarrow \bar{R}$ is locally
%Lipschitz continuous and such that $f(0,  w)$ = 0 for all  $w\geq0$.
%
%
% with $f(0)\geq0$.
 Then there exists a maximal existence time $T_{max}\in(0,\infty]$ and a triple of  nonnegative functions %$$
% (u,v,w)\in C^0(\bar{\Omega}\times[0,T_{max}))\cap C^{2,1}(\bar{\Omega}\times(0,T_{max}))\times
% C^0((0,T_{max}); C^2(\bar{\Omega}))\times C^{2,1}(\bar{\Omega}\times[0,T_{max}))$$
 $$
\left\{\begin{array}{rl}
&\displaystyle{u\in C^0(\bar{\Omega}\times[0,T_{max}))\cap C^{2,1}(\bar{\Omega}\times(0,T_{max})),}\\
&\displaystyle{v\in C^0(\bar{\Omega}\times[0,T_{max}))\cap C^{2,1}(\bar{\Omega}\times(0,T_{max})),}\\
&\displaystyle{w\in  C^{2,1}(\bar{\Omega}\times[0,T_{max})),}\\
\end{array}\right.
$$
 which solves \dref{1.1}  classically and satisfies $w\leq \|w_0\|_{L^\infty(\Omega)}$
  in $\Omega\times(0,T_{max})$.
%
%such that $(u,v,w)$ is a classical solution of model \dref{1.1630} in $\Omega\times[0,T_{max})$.
Moreover, if  $T_{max}<+\infty$, then
\begin{equation}
\|u(\cdot, t)\|_{L^\infty(\Omega)}+\|v(\cdot,t)\|_{W^{1,\infty}(\Omega)}\rightarrow\infty~~ \mbox{as}~~ t\nearrow T_{max}.
\label{1.163072x}
\end{equation}
\end{lemma}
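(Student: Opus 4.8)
The plan is to construct the local solution by a standard Banach fixed-point argument on a short time interval, in the spirit of \cite{Tao72,liujinijjkk1}, and then to upgrade the regularity by parabolic Schauder theory. Fix $R:=\|u_0\|_{L^\infty(\Omega)}+1$ and, for $T\in(0,1)$ to be specified, introduce the closed set
\[
\mathcal{S}:=\Big\{\,u\in C^0(\bar\Omega\times[0,T])~:~\|u\|_{L^\infty(\Omega\times(0,T))}\leq R,\ u(\cdot,0)=u_0\,\Big\},
\]
which is a complete metric space under the norm of $C^0(\bar\Omega\times[0,T])$. On $\mathcal{S}$ I would define a map $\Phi$ whose fixed point is the desired $u$-component, while the other two components are reconstructed along the way.

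Given $\bar u\in\mathcal{S}$, first I would let $v$ be the unique solution of the linear problem $v_t=\Delta v-v+\bar u$ with homogeneous Neumann data and $v(\cdot,0)=v_0$; via the variation-of-constants representation $v(t)=e^{t(\Delta-1)}v_0+\int_0^t e^{(t-s)(\Delta-1)}\bar u(s)\,ds$ together with the smoothing estimates of Lemma \ref{lemmaggbb41ffgg}, one obtains $v\geq0$ and a bound for $\|v\|_{L^\infty}$ and, crucially, for $\|\nabla v\|_{L^\infty}$ on $[0,T]$ depending only on $R$, $\|v_0\|_{W^{1,\infty}(\Omega)}$ and $T$. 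Next, since the third equation is a pointwise ODE in $t$, I would solve it explicitly by
\[
w(x,t)=w_0(x)\,\exp\!\Big(-\int_0^t v(x,s)\,ds\Big),
\]
which immediately yields $0\leq w\leq w_0\leq\|w_0\|_{L^\infty(\Omega)}$ because $v\geq0$, and, after differentiation, a control of $\nabla w$ through $\nabla w_0$ and $\int_0^t\nabla v\,ds$ — here the hypothesis $w_0\in C^{2+\vartheta}(\bar\Omega)$ in \dref{x1.731426677gg} is what renders $\nabla w$ bounded. Finally I would set $\Phi(\bar u):=u$, where $u$ solves the first equation with $v,w$ frozen, represented through
\[
u(t)=e^{t\Delta}u_0-\chi\!\int_0^t\! e^{(t-s)\Delta}\nabla\!\cdot(\bar u\nabla v)\,ds-\xi\!\int_0^t\! e^{(t-s)\Delta}\nabla\!\cdot(\bar u\nabla w)\,ds+\int_0^t\! e^{(t-s)\Delta}\bar u\big(1-\bar u^{r-1}-w\big)\,ds.
\]
Writing the taxis terms in divergence form lets me invoke the estimate $\|e^{t\Delta}\nabla\!\cdot f\|_{L^\infty}\leq C(1+t^{-\frac12-\frac{N}{2q}})\|f\|_{L^q}$, so that only $\bar u\in L^\infty$ and the already-established $L^\infty$ bounds on $\nabla v,\nabla w$ are needed, and no derivatives of $\bar u$ enter.

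To close the argument I would verify the two properties of $\Phi$ on $\mathcal{S}$. For self-mapping, the time integrals carry positive powers of $T$, so choosing $T$ small (depending only on $R$, $\|v_0\|_{W^{1,\infty}}$ and $\|w_0\|_{L^\infty}$) keeps $\|\Phi(\bar u)\|_{L^\infty}\leq R$. For the contraction, given $\bar u_1,\bar u_2\in\mathcal{S}$ I would estimate the associated differences $v_1-v_2$, $\nabla v_1-\nabla v_2$, and then $w_1-w_2$, $\nabla w_1-\nabla w_2$ — all Lipschitz in $\|\bar u_1-\bar u_2\|_{L^\infty}$ — and feed these into the representation of $\Phi(\bar u_1)-\Phi(\bar u_2)$; again the small-$T$ prefactors make $\Phi$ a strict contraction. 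Banach's theorem then produces a unique fixed point $u$ and the corresponding triple $(u,v,w)$ solving \dref{1.1} in the mild sense, with $u,v\geq0$ by the maximum principle and $w$ as above. Parabolic $L^p$ and Schauder estimates applied iteratively to the $u$- and $v$-equations, and direct differentiation of the ODE formula for $w$ using the gained regularity of $v$, then bootstrap $(u,v,w)$ to the claimed classes $C^0(\bar\Omega\times[0,T_{\max}))\cap C^{2,1}(\bar\Omega\times(0,T_{\max}))$ for $u,v$ and $C^{2,1}$ for $w$.

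It remains to produce $T_{\max}$ and the blow-up criterion \dref{1.163072x}. Extending the solution as long as possible defines $T_{\max}\in(0,\infty]$; the key point is that the length of the existence interval furnished by the fixed-point step depends on the data only through $\|u_0\|_{L^\infty}$ and $\|v_0\|_{W^{1,\infty}}$ (with $w$ always controlled by $\|w_0\|_{L^\infty}$). Hence, if $T_{\max}<\infty$ while $\|u(\cdot,t)\|_{L^\infty(\Omega)}+\|v(\cdot,t)\|_{W^{1,\infty}(\Omega)}$ remained bounded as $t\nearrow T_{\max}$, one could restart the construction from a time close to $T_{\max}$ and prolong the solution past $T_{\max}$, a contradiction; this yields \dref{1.163072x}. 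I expect the genuinely delicate step to be the control of the haptotaxis contribution: because $w$ is determined by an ODE, its spatial gradient is $\nabla w=\big(\nabla w_0-w_0\int_0^t\nabla v\,ds\big)e^{-\int_0^t v\,ds}$, so both the a priori bound and the Lipschitz dependence of $\nabla w$ must be routed through the spatial gradient of the parabolic component $v$, and it is this $\nabla v$-to-$\nabla w$ transfer, rather than the chemotaxis term, that dictates how small $T$ must be chosen.
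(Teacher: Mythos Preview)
Your proposal is correct and follows precisely the standard contraction-mapping scheme from \cite{Tao72,liujinijjkk1} that the paper invokes; the paper itself does not supply a proof of this lemma but merely cites these references, so your sketch in fact fills in the argument the paper outsources. The one point worth double-checking is the uniformity of the restart time in the blow-up alternative: you correctly note it depends only on $\|u\|_{L^\infty}$ and $\|v\|_{W^{1,\infty}}$ (with $w$ bounded by $\|w_0\|_{L^\infty}$), which is exactly what makes \dref{1.163072x} go through.
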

%As a preliminary ingredient to the proof of  Lemma \ref{lemma70}, let us formulate a collection
%of estimates for solutions of the third equation in \dref{1.1}.

Firstly, by Lemma \ref{lemma70}, we can  pick  $s_0\in(0,T_{max})$, $s_0\leq1$ and
 %by the regularity principle asserted
  %we can conclude that for any given $s_0\in(0,T_{max}),s_0\leq1,$ there exists
% have $(u(\cdot,s),v(\cdot,s))\in C^2(\bar{\Omega})$, so that in particular, we can pick
$\beta>0$ such that
\begin{equation}\label{eqx45xx12112}
\|u(\tau)\|_{L^\infty(\Omega)}\leq \beta~~~\|v(\tau)\|_{W^{1,\infty}(\Omega)}\leq \beta~~\mbox{and}~~\|w(\tau)\|_{W^{2,\infty}(\Omega)}\leq \beta~~\mbox{for all}~~\tau\in[0,s_0].
\end{equation}

\section{The proof of theorem \ref{theorem3}}

In this section, we are going to establish an iteration step to develop the main ingredient of our result.
The iteration depends on a series of a priori estimates.
Firstly, based on the ideas of Lemma 3.1  in \cite{Wangrrssdeeefffjkk1} (see also Lemma 2.1 of \cite{Winkler37103}), we
 recall now a well-known property of systems of type \dref{1.1} with a generalized logistic
source exhibiting a  decay with respect to $u$ in the first equation.
%
%
%
%Now,
%due to the presence of the generalized logistic source in the first equation of \dref{1.1}, some useful estimates
%can be derived from Lemma 2.2  in \cite{Tao41215}, which can be stated in the following.

\begin{lemma}\label{wsdelemma45}
Under the assumptions in theorem \ref{theorem3}, we derive that
there exists a positive constant %$m > 0$ and
$C$
such that the solution of \dref{1.1} satisfies
%
%Under the assumption of Lemma \ref{lemma70}.
%, the solution of \dref{1.1} satisfies
%Assume that %$f$ satisfies \dref{6291} and
%$(u, v)$ is the solution of \dref{1.1}.
%
%Then for any $T\in (s, T_{max})$, there exists $C > 0$ such that
\begin{equation}
\int_{\Omega}{u(x,t)}+\int_{\Omega} {v^2}(x,t)+\int_{\Omega}|\nabla {v}(x,t)|^2 \leq C~~\mbox{for all}~~ t\in(0, T_{max}).
\label{cz2.5ghju48cfg924ghyuji}
\end{equation}
Moreover,
%Then
for each $T\in(0, T_{max})$, one can find a constant $C > 0$ independent of $\varepsilon$ such that
\begin{equation}
\begin{array}{rl}
&\displaystyle{\int_{0}^T\int_{\Omega}[|\nabla {v}|^2+u^r+ |\Delta {v}|^2]\leq C.}\\
\end{array}
\label{bnmbncz2.5ghhjuyuivvbssdddeennihjj}
\end{equation}
%with $l\in[1,\frac{N}{N-1}).$
%\begin{remark}
%The bounded of $\int_{\Omega}|\nabla {v}|^2$ plays an important rule in getting the boundness of $\int_{\Omega}|u|^k+\int_{\Omega}|\nabla v|^{2\beta}$
%for some large $k$ and $\beta$.
%\end{remark}
\end{lemma}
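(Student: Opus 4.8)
The plan is to derive the three basic estimates in \dref{cz2.5ghju48cfg924ghyuji}—the $L^1$ bound on $u$, the $L^2$ bound on $v$, and the $L^2$ bound on $\nabla v$—together with the space–time bounds in \dref{bnmbncz2.5ghhjuyuivvbssdddeennihjj}, by the usual energy-functional technique for logistic Keller–Segel-type systems, adapted to accommodate the haptotaxis term and the ODE for $w$. First I would integrate the $u$-equation over $\Omega$: the chemotaxis and haptotaxis divergence terms vanish under the homogeneous Neumann conditions, leaving $\frac{d}{dt}\int_\Omega u = \int_\Omega u(1-u^{r-1}-w)$. Since $w\ge 0$ by Lemma~\ref{lemma70}, we have $\frac{d}{dt}\int_\Omega u \le \int_\Omega u - \int_\Omega u^r$, and by Hölder's inequality $\int_\Omega u \le |\Omega|^{1-1/r}(\int_\Omega u^r)^{1/r}$, so an ODI of the form $y' \le C_1 - C_2 y^{r}$ (after absorbing constants) yields $\int_\Omega u(\cdot,t)\le C$ uniformly in $t\in(0,T_{\max})$. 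Integrating the same differential inequality in time over $(0,T)$ also gives $\int_0^T\int_\Omega u^r \le C$, which is the $u^r$ part of \dref{bnmbncz2.5ghhjuyuivvbssdddeennihjj}.

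Next I would test the $v$-equation with $v$: $\frac12\frac{d}{dt}\int_\Omega v^2 + \int_\Omega|\nabla v|^2 + \int_\Omega v^2 = \int_\Omega uv$. To close this I need to control $\int_\Omega uv$ by the dissipation terms plus the already-controlled quantity $\int_\Omega u^r$; since $r\ge 2$ in both cases (i) and (ii), Young's inequality gives $\int_\Omega uv \le \tfrac12\int_\Omega v^2 + C\int_\Omega u^2 \le \tfrac12\int_\Omega v^2 + \varepsilon\int_\Omega u^r + C_\varepsilon$, so after absorbing the $v^2$ term we obtain $\frac{d}{dt}\int_\Omega v^2 + \int_\Omega|\nabla v|^2 + \int_\Omega v^2 \le \varepsilon\int_\Omega u^r + C$. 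Combining this with a suitable multiple of the $u$-ODI (to dominate the $\varepsilon\int_\Omega u^r$ term by the $-\int_\Omega u^r$ already available) produces a Grönwall-type inequality for $\int_\Omega u + \int_\Omega v^2$, giving both the $L^1$ bound on $u$ and the $L^2$ bound on $v$, and, upon time integration, the bound $\int_0^T\int_\Omega|\nabla v|^2\le C$.

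For the $\int_\Omega|\nabla v|^2$ bound that is pointwise in $t$, and for the $\int_0^T\int_\Omega|\Delta v|^2$ bound, I would test the $v$-equation with $-\Delta v$: this yields $\frac12\frac{d}{dt}\int_\Omega|\nabla v|^2 + \int_\Omega|\Delta v|^2 + \int_\Omega|\nabla v|^2 = -\int_\Omega u\,\Delta v \le \tfrac12\int_\Omega|\Delta v|^2 + \tfrac12\int_\Omega u^2$. Since $\int_\Omega u^2 \le \varepsilon\int_\Omega u^r + C_\varepsilon$ (using $r\ge 2$) and $\int_0^T\int_\Omega u^r\le C$ is already in hand, integrating in time gives $\sup_{t}\int_\Omega|\nabla v|^2 \le C$ and $\int_0^T\int_\Omega|\Delta v|^2 \le C$, with the initial contribution controlled by $\|v_0\|_{W^{1,\infty}(\Omega)}$ via \dref{x1.731426677gg}. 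I expect the only genuinely delicate point to be the bookkeeping that keeps all constants independent of $T_{\max}$ (and of the regularization parameter $\varepsilon$ in the approximate problem, if one is used): one must choose the coefficients in the linear combination of the $u$-, $v^2$-, and $\nabla v$-functionals so that every ``bad'' term $\int_\Omega u^r$, $\int_\Omega u^2$, $\int_\Omega v^2$ appearing on the right is strictly dominated, leaving a clean dissipative ODI of the form $\frac{d}{dt}\mathcal{E} + c\,\mathcal{E} \le C$; this is routine but requires care because the haptotaxis term $-uw$ contributes a sign-favourable term $-\int_\Omega u^2 w \le 0$ in the $L^1$ estimate that can simply be discarded, while the coupling $uv$ in the $v$-equation is what forces the use of $r\ge 2$.
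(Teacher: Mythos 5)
Your proposal is correct and follows essentially the route the paper itself relies on: the paper gives no proof of this lemma, merely citing Lemma 3.1 of Wang--Ke and Lemma 2.1 of Winkler (2010), and the argument in those references is exactly the energy method you describe (mass identity for $u$ with the logistic absorption $-\int_\Omega u^r$, testing the $v$-equation with $v$ and with $-\Delta v$, then a linear combination). One point to tighten: the sentence ``integrating in time gives $\sup_t\int_\Omega|\nabla v|^2\le C$'' is too quick when $T_{max}=\infty$, since the bound $\int_0^T\int_\Omega u^r\le C$ you invoke there has a constant growing linearly in $T$, so direct time-integration only yields $\int_\Omega|\nabla v(t)|^2\le C(1+t)$. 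The uniform-in-time bound asserted in \dref{cz2.5ghju48cfg924ghyuji} requires either keeping the damping term $2\int_\Omega|\nabla v|^2$ on the left and closing a dissipative inequality $\mathcal{E}'+c\,\mathcal{E}\le C$ for the combined functional $\mathcal{E}=A\int_\Omega u+\int_\Omega v^2+\int_\Omega|\nabla v|^2$ (which is precisely what your closing paragraph prescribes), or invoking the standard ODE comparison lemma together with the uniform estimate $\sup_t\int_t^{t+1}\int_\Omega u^2\le C$ obtained by integrating the $u$-inequality over unit time intervals. Since you state the correct mechanism yourself, I regard the proof as complete once that combination is actually carried out rather than deferred.
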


Now, applying almost exactly the same arguments as in the proof of Lemma 3.2 of \cite{Wangscd331629}
(the minor necessary changes are left as an easy exercise to the reader),
we conclude the following Lemma:
\begin{lemma}\label{zsxxcdvlemma45630}
%Assume that $f$ satisfies \dref{6291}, $\phi$, $\psi$ and $\varphi$ satisfy \dref{9161}--\dref{9161hh117512}.
Let $(u,v,w)$ be a solution to \dref{1.1} on $(0,T_{max})$.
  Then for any  $k>1$,
there exists a positive constant $C_\beta:=C(\xi,\|w_0\|_{L^\infty(\Omega)},\beta)$ which depends  on $\xi,\|w_0\|_{L^\infty(\Omega)}$ and $\beta$ such that
\begin{equation}
\begin{array}{rl}
&\displaystyle{-\xi\int_\Omega  u^{k-1}\nabla\cdot(u\nabla w )\leq C_\beta(\frac{k-1}{k}
\int_\Omega  u^{k}(v+1)+k\int_{\Omega} u^{k-1}|\nabla u|).}\\
%\leq&\displaystyle{\int_\Omega (|f(x,t)|+L)|u|^{q+1} {}}\\
%\leq&\displaystyle{\int_\Omega (k_1|u|^\vartheta+k_2)|u|^{q+1} {}}\\
%=&\displaystyle{\int\int\triangle J(x-y)u(y)(u|u|^q(x))dy{}.}
%\leq&\displaystyle{(|g|_{L^\infty(0,\omega; L^\infty(\Omega))}+1)(|\Omega|+1)^{\frac{1}{2}}|u|^{q+1}_{q+2}.}\\
%\leq&\displaystyle{\frac{q+1}{q+2}((|f|_{L^\infty(0,\omega; L^\infty(\Omega))}+1)(|\Omega|+1)^{\frac{1}{2}})^{\frac{q+2}{q+1}}|u|^{q+2}_{q+2}+\frac{1}{q+2}}\\
%\leq&\displaystyle{\frac{q+1}{q+2}(|f|_{L^\infty(0,\omega; L^\infty(\Omega))}+1)^2(|\Omega|+1)|u|^{q+2}_{q+2}+\frac{1}{q+2}.}\\
\end{array}
\label{vbgncz2.5xx1ffgghh512}
\end{equation}
where $\beta$ is the same as \dref{eqx45xx12112}.
%$\gamma=\max\{q+1,p,2p-m\}$.
\end{lemma}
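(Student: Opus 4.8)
The plan is to exploit the explicit, non-diffusive structure of $w$. Solving the third equation of \dref{1.1} pointwise in $x$ gives $w(x,t)=w_0(x)\,e^{-\phi(x,t)}$ with $\phi(x,t):=\int_0^t v(x,s)\,ds\ge0$, and all the estimates will be read off from this representation after moving the divergence in the haptotaxis term onto $u$. Note that since $\frac{\partial w_0}{\partial\nu}=0$ and $\frac{\partial v}{\partial\nu}=0$ on $\partial\Omega$ we also have $\frac{\partial\phi}{\partial\nu}=0$ and hence $\frac{\partial w}{\partial\nu}=0$ on $\partial\Omega$, so no boundary terms arise in any of the integrations by parts below.

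First I would integrate by parts twice to write
$$-\xi\int_\Omega u^{k-1}\nabla\cdot(u\nabla w)=\xi(k-1)\int_\Omega u^{k-1}\nabla u\cdot\nabla w=-\frac{\xi(k-1)}{k}\int_\Omega u^k\Delta w .$$
From $w=w_0e^{-\phi}$ one has
$$\Delta w=e^{-\phi}\Delta w_0-2e^{-\phi}\nabla w_0\cdot\nabla\phi+w\,|\nabla\phi|^2-w\,\Delta\phi ,$$
and, inserting the second equation of \dref{1.1} in the form $\Delta v=v_t+v-u$, one rewrites $\Delta\phi=\int_0^t\Delta v\,ds=v(\cdot,t)-v_0+\phi-\int_0^t u(\cdot,s)\,ds$. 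Substituting into $-\frac{\xi(k-1)}{k}\int_\Omega u^k\Delta w$, the terms coming from $w|\nabla\phi|^2$, from $-wv_0$ and from $-w\int_0^t u$ are nonpositive and may be discarded; the term from $w\,v(\cdot,t)$ is at most $\frac{\xi(k-1)}{k}\|w_0\|_{L^\infty(\Omega)}\int_\Omega u^kv$; the term from $w\phi$ is at most $\frac{\xi(k-1)}{k}e^{-1}\|w_0\|_{L^\infty(\Omega)}\int_\Omega u^k$ (using $w\phi=w_0\,\phi e^{-\phi}$ and $\sup_{s\ge0}se^{-s}=e^{-1}$); and the term from $e^{-\phi}\Delta w_0$ is at most $\frac{\xi(k-1)}{k}\beta\int_\Omega u^k$ since $0<e^{-\phi}\le1$ and $\|w_0\|_{W^{2,\infty}(\Omega)}\le\beta$. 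Altogether these already give a bound of the desired form $C_\beta\frac{k-1}{k}\int_\Omega u^k(v+1)$.

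The only genuinely delicate term is $\frac{2\xi(k-1)}{k}\int_\Omega u^k e^{-\phi}\nabla w_0\cdot\nabla\phi$, since $\nabla\phi=\int_0^t\nabla v\,ds$ need not be bounded. Here I would use the identity $e^{-\phi}\nabla\phi=-\nabla(e^{-\phi})$ and integrate by parts once more; because $\frac{\partial w_0}{\partial\nu}=0$ the boundary term again vanishes and one is left with $\frac{2\xi(k-1)}{k}\int_\Omega e^{-\phi}\bigl(ku^{k-1}\nabla u\cdot\nabla w_0+u^k\Delta w_0\bigr)$, which is controlled by $2\xi\beta\,k\int_\Omega u^{k-1}|\nabla u|+\frac{2\xi\beta(k-1)}{k}\int_\Omega u^k$. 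The point is that this shifts the derivative hidden in $\nabla\phi$ onto $u^k$ instead of estimating $\nabla w$ crudely, and it integrates by parts rather than dividing by $w_0$, so the possible vanishing of $w_0$ is harmless. Collecting all contributions and choosing $C_\beta$ to absorb the finitely many constants $\xi$, $\|w_0\|_{L^\infty(\Omega)}$ and $\beta$ (which also bounds $\|w_0\|_{W^{2,\infty}(\Omega)}$) yields \dref{vbgncz2.5xx1ffgghh512}. This last term is exactly the main obstacle: everything else is either manifestly of one of the two admissible types or has a good sign.
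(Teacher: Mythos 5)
Your proposal is correct and follows essentially the same route as the paper: the explicit representation $w=w_0e^{-\int_0^t v}$, integration by parts to reduce to $-\frac{\xi(k-1)}{k}\int_\Omega u^k\Delta w$, substitution of $\Delta v=v_t+v-u$ together with $se^{-s}\le 1$ to control $w\int_0^t\Delta v\,ds$ by $\|w_0\|_{L^\infty(\Omega)}(v+1)$, and the further integration by parts via $e^{-\phi}\nabla\phi=-\nabla e^{-\phi}$ to convert the $\nabla w_0\cdot\int_0^t\nabla v\,ds$ term into the gradient term $k\int_\Omega u^{k-1}|\nabla u|$. Your sign bookkeeping (discarding the nonpositive contributions of $w|\nabla\phi|^2$, $-wv_0$ and $-w\int_0^t u$) is, if anything, slightly more explicit than the paper's.
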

\begin{proof}
Here and throughout the proof of Lemma  \ref{zsxxcdvlemma45630}, we shall denote by $M_i (i\in N)$ several positive
constants independent of  $k$.
Firstly, observing that
the third equation of  \dref{1.1} is an ODE,  we derive that
\begin{equation}
\begin{array}{rl}
&\displaystyle{w(x,t)=w(x,s_0)e^{-\int_0^t v(x,s)ds},~~(x,t)\in\Omega\times(0,T_{max}).}\\
%\leq&\displaystyle{\int_\Omega (|f(x,t)|+L)|u|^{q+1} {}}\\
%\leq&\displaystyle{\int_\Omega (k_1|u|^\vartheta+k_2)|u|^{q+1} {}}\\
%=&\displaystyle{\int\int\triangle J(x-y)u(y)(u|u|^q(x))dy{}.}
%\leq&\displaystyle{(|g|_{L^\infty(0,\omega; L^\infty(\Omega))}+1)(|\Omega|+1)^{\frac{1}{2}}|u|^{q+1}_{q+2}.}\\
%\leq&\displaystyle{\frac{q+1}{q+2}((|f|_{L^\infty(0,\omega; L^\infty(\Omega))}+1)(|\Omega|+1)^{\frac{1}{2}})^{\frac{q+2}{q+1}}|u|^{q+2}_{q+2}+\frac{1}{q+2}}\\
%\leq&\displaystyle{\frac{q+1}{q+2}(|f|_{L^\infty(0,\omega; L^\infty(\Omega))}+1)^2(|\Omega|+1)|u|^{q+2}_{q+2}+\frac{1}{q+2}.}\\
\end{array}
\label{vbgncz2.5xx1ffsskkoppgghh512}
\end{equation}
Hence,  by a basic
calculation, we conclude that
\begin{equation}
\begin{array}{rl}
&\displaystyle{\nabla w(x,t)=\nabla w(x,s_0)e^{-\int_0^t v(x,s)ds}}\\
&\displaystyle{-w(x,s_0)e^{-\int_0^t v(x,s)ds}\int_0^t  \nabla v(x,s)ds,~(x,t)\in\Omega\times(0,T_{max}).}\\
%\leq&\displaystyle{\int_\Omega (|f(x,t)|+L)|u|^{q+1} {}}\\
%\leq&\displaystyle{\int_\Omega (k_1|u|^\vartheta+k_2)|u|^{q+1} {}}\\
%=&\displaystyle{\int\int\triangle J(x-y)u(y)(u|u|^q(x))dy{}.}
%\leq&\displaystyle{(|g|_{L^\infty(0,\omega; L^\infty(\Omega))}+1)(|\Omega|+1)^{\frac{1}{2}}|u|^{q+1}_{q+2}.}\\
%\leq&\displaystyle{\frac{q+1}{q+2}((|f|_{L^\infty(0,\omega; L^\infty(\Omega))}+1)(|\Omega|+1)^{\frac{1}{2}})^{\frac{q+2}{q+1}}|u|^{q+2}_{q+2}+\frac{1}{q+2}}\\
%\leq&\displaystyle{\frac{q+1}{q+2}(|f|_{L^\infty(0,\omega; L^\infty(\Omega))}+1)^2(|\Omega|+1)|u|^{q+2}_{q+2}+\frac{1}{q+2}.}\\
\end{array}
\label{vbgncz2.5xx1ffsskkopffggpgghh512}
\end{equation}
and
\begin{equation}
\begin{array}{rl}
&\Delta w(x,t)\\
\geq&\displaystyle{ \Delta w(x,s_0)e^{-\int_0^t v(x,s)ds}-2\nabla w(x,s_0)\cdot \int_0^t  \nabla v(x,s)dse^{-\int_0^t v(x,s)ds}}\\
&-\displaystyle{w(x,s_0)e^{-\int_0^t\Delta v(x,s)ds}\int_0^t\Delta v(x,s)ds.}\\
%\leq&\displaystyle{\int_\Omega (|f(x,t)|+L)|u|^{q+1} {}}\\
%\leq&\displaystyle{\int_\Omega (k_1|u|^\vartheta+k_2)|u|^{q+1} {}}\\
%=&\displaystyle{\int\int\triangle J(x-y)u(y)(u|u|^q(x))dy{}.}
%\leq&\displaystyle{(|g|_{L^\infty(0,\omega; L^\infty(\Omega))}+1)(|\Omega|+1)^{\frac{1}{2}}|u|^{q+1}_{q+2}.}\\
%\leq&\displaystyle{\frac{q+1}{q+2}((|f|_{L^\infty(0,\omega; L^\infty(\Omega))}+1)(|\Omega|+1)^{\frac{1}{2}})^{\frac{q+2}{q+1}}|u|^{q+2}_{q+2}+\frac{1}{q+2}}\\
%\leq&\displaystyle{\frac{q+1}{q+2}(|f|_{L^\infty(0,\omega; L^\infty(\Omega))}+1)^2(|\Omega|+1)|u|^{q+2}_{q+2}+\frac{1}{q+2}.}\\
\end{array}
\label{vbgncz2.5xx1ffsskkosxdcfvppgghh512}
\end{equation}
On the other hand, for any $k\geq 1,$ integrating by parts yields
\begin{equation}
\begin{array}{rl}
&-\xi\disp\int_\Omega  u^{k-1}\nabla\cdot(u\nabla w )\\
=&\displaystyle{ -\xi\frac{k-1}{k}\int_\Omega  u^{k}\Delta w }\\
\leq&\displaystyle{ \xi\frac{k-1}{k}\int_\Omega  u^{k}(-\Delta w(x,s_0)e^{-\int_0^t v(x,s)ds}+2\nabla w(x,s_0)\cdot \int_0^t  \nabla v(x,s)dse^{-\int_0^t v(x,s)ds}) }\\
&+\displaystyle{ \xi\frac{k-1}{k}\int_\Omega  u^{k}w(x,s_0)e^{-\int_0^t\Delta v(x,s)ds}\int_0^t\Delta v(x,s)ds }\\
:=&\displaystyle{ J_1.}\\
%\leq&\displaystyle{\int_\Omega (|f(x,t)|+L)|u|^{q+1} {}}\\
%\leq&\displaystyle{\int_\Omega (k_1|u|^\vartheta+k_2)|u|^{q+1} {}}\\
%=&\displaystyle{\int\int\triangle J(x-y)u(y)(u|u|^q(x))dy{}.}
%\leq&\displaystyle{(|g|_{L^\infty(0,\omega; L^\infty(\Omega))}+1)(|\Omega|+1)^{\frac{1}{2}}|u|^{q+1}_{q+2}.}\\
%\leq&\displaystyle{\frac{q+1}{q+2}((|f|_{L^\infty(0,\omega; L^\infty(\Omega))}+1)(|\Omega|+1)^{\frac{1}{2}})^{\frac{q+2}{q+1}}|u|^{q+2}_{q+2}+\frac{1}{q+2}}\\
%\leq&\displaystyle{\frac{q+1}{q+2}(|f|_{L^\infty(0,\omega; L^\infty(\Omega))}+1)^2(|\Omega|+1)|u|^{q+2}_{q+2}+\frac{1}{q+2}.}\\
\end{array}
\label{vbgncz2.5xxeffghjxx1ffgghh512}
\end{equation}
%where
%\begin{equation}
%\begin{array}{rl}
%&\displaystyle{J_1= -\xi\frac{k-1}{k}\int_\Omega  u^{k}\Delta w }\\
%%\leq&\displaystyle{\int_\Omega (|f(x,t)|+L)|u|^{q+1} {}}\\
%%\leq&\displaystyle{\int_\Omega (k_1|u|^\vartheta+k_2)|u|^{q+1} {}}\\
%%=&\displaystyle{\int\int\triangle J(x-y)u(y)(u|u|^q(x))dy{}.}
%%\leq&\displaystyle{(|g|_{L^\infty(0,\omega; L^\infty(\Omega))}+1)(|\Omega|+1)^{\frac{1}{2}}|u|^{q+1}_{q+2}.}\\
%%\leq&\displaystyle{\frac{q+1}{q+2}((|f|_{L^\infty(0,\omega; L^\infty(\Omega))}+1)(|\Omega|+1)^{\frac{1}{2}})^{\frac{q+2}{q+1}}|u|^{q+2}_{q+2}+\frac{1}{q+2}}\\
%%\leq&\displaystyle{\frac{q+1}{q+2}(|f|_{L^\infty(0,\omega; L^\infty(\Omega))}+1)^2(|\Omega|+1)|u|^{q+2}_{q+2}+\frac{1}{q+2}.}\\
%\end{array}
%\label{vbgncz2.5zzsxdccxxeffghjxx1ffgghh512}
%\end{equation}
%and
%\begin{equation}
%\begin{array}{rl}
%&\displaystyle{J_2=-\xi\int_\Omega  u^{k-1}\Delta w.}\\
%%\leq&\displaystyle{\int_\Omega (|f(x,t)|+L)|u|^{q+1} {}}\\
%%\leq&\displaystyle{\int_\Omega (k_1|u|^\vartheta+k_2)|u|^{q+1} {}}\\
%%=&\displaystyle{\int\int\triangle J(x-y)u(y)(u|u|^q(x))dy{}.}
%%\leq&\displaystyle{(|g|_{L^\infty(0,\omega; L^\infty(\Omega))}+1)(|\Omega|+1)^{\frac{1}{2}}|u|^{q+1}_{q+2}.}\\
%%\leq&\displaystyle{\frac{q+1}{q+2}((|f|_{L^\infty(0,\omega; L^\infty(\Omega))}+1)(|\Omega|+1)^{\frac{1}{2}})^{\frac{q+2}{q+1}}|u|^{q+2}_{q+2}+\frac{1}{q+2}}\\
%%\leq&\displaystyle{\frac{q+1}{q+2}(|f|_{L^\infty(0,\omega; L^\infty(\Omega))}+1)^2(|\Omega|+1)|u|^{q+2}_{q+2}+\frac{1}{q+2}.}\\
%\end{array}
%\label{vbgncz2.5xxeffgnffbhhjxx1ffgghh512}
%\end{equation}
Now, %inserting \dref{vbgncz2.5xx1ffsskkosxdcfvppgghh512} into \dref{vbgncz2.5zzsxdccxxeffghjxx1ffgghh512} and
using $v\geq0$ and the Young inequality,  we have
\begin{equation}
\begin{array}{rl}
J_1\leq&\displaystyle{-\xi\frac{k-1}{k}\int_\Omega  u^{k}
\Delta w(x,s_0)e^{-\int_0^t v(x,s)ds}+\xi\frac{k-1}{k}\int_\Omega  u^{k}w(x,s_0)e^{-\int_0^t v(x,s)ds}\int_0^t\Delta v(x,s)ds}\\
&+\displaystyle{\xi\frac{2(k-1)}{k}\int_\Omega  u^{k}\nabla w(x,s_0)\cdot \int_0^t  \nabla v(x,s)dse^{-\int_0^t v(x,s)ds}}\\
=&\displaystyle{-\xi\frac{k-1}{k}\int_\Omega  u^{k}
\Delta w(x,s_0)e^{-\int_0^t v(x,s)ds}+\xi\frac{k-1}{k}\int_\Omega  u^{k}w(x,s_0)e^{-\int_0^t v(x,s)ds}\int_0^t\Delta v(x,s)ds}\\
&-\displaystyle{\xi\frac{2(k-1)}{k}\int_\Omega  u^{k}\nabla w(x,s_0)\cdot\nabla e^{-\int_0^t v(x,s)ds}}\\
\leq&\displaystyle{\xi\beta\int_\Omega  u^{k}+\xi\frac{k-1}{k}\int_\Omega  u^{k}w(x,s_0)e^{-\int_0^t v(x,s)ds}\int_0^t\Delta v(x,s)ds}\\
&+\displaystyle{2\xi(k-1)\int_\Omega  u^{k-1}\nabla u\cdot\nabla w(x,s_0) e^{-\int_0^t v(x,s)ds}+\frac{2(k-1)}{k}\int_\Omega  u^{k}\Delta w(x,s_0)e^{-\int_0^t v(x,s)ds}}\\
\leq&\displaystyle{M_1\int_\Omega  u^{k}+\xi\frac{k-1}{k}\int_\Omega  u^{k}w(x,s_0)e^{-\int_0^t v(x,s)ds}\int_0^t\Delta v(x,s)ds}\\
&+\displaystyle{M_2(k\int_\Omega  u^{k-1}|\nabla u|+\int_\Omega  u^{k}),}\\
%&+\displaystyle{\frac{k-1}{k}\int_\Omega  u^{k}w(x,s_0)e^{-\int_0^t\Delta v(x,s)ds}.}\\
%\leq&\displaystyle{\int_\Omega (|f(x,t)|+L)|u|^{q+1} {}}\\
%\leq&\displaystyle{\int_\Omega (k_1|u|^\vartheta+k_2)|u|^{q+1} {}}\\
%=&\displaystyle{\int\int\triangle J(x-y)u(y)(u|u|^q(x))dy{}.}
%\leq&\displaystyle{(|g|_{L^\infty(0,\omega; L^\infty(\Omega))}+1)(|\Omega|+1)^{\frac{1}{2}}|u|^{q+1}_{q+2}.}\\
%\leq&\displaystyle{\frac{q+1}{q+2}((|f|_{L^\infty(0,\omega; L^\infty(\Omega))}+1)(|\Omega|+1)^{\frac{1}{2}})^{\frac{q+2}{q+1}}|u|^{q+2}_{q+2}+\frac{1}{q+2}}\\
%\leq&\displaystyle{\frac{q+1}{q+2}(|f|_{L^\infty(0,\omega; L^\infty(\Omega))}+1)^2(|\Omega|+1)|u|^{q+2}_{q+2}+\frac{1}{q+2}.}\\
\end{array}
\label{vbgncz2.5xx1fnasddccvvfsskkosxdcfvppgghh512}
\end{equation}
where $M_1=\xi\beta$ and $M_2=\max\{2\xi\sup_{x\in\Omega}|\nabla w(x,s_0)|,2\sup_{x\in\Omega}|\Delta w(x,s_0)|\}$.
% are positive constants  independent of $k.$
Next, due to the second equality of \dref{1.1} and $u\geq0$, we conclude that %there exists a positive constant $M_3$ %independent of $k$
%such that
\begin{equation}
\begin{array}{rl}
&\displaystyle{\xi\frac{k-1}{k}\int_\Omega  u^{k}w(x,s_0)e^{-\int_0^t v(x,s)ds}\int_0^t\Delta v(x,s)ds}\\
=&\displaystyle{\xi\frac{k-1}{k}\int_\Omega  u^{k}w(x,s_0)e^{-\int_0^t v(x,s)ds}\int_0^t(v_s(x,s)+v(x,s)-u(x,s)))ds}\\
\leq&\displaystyle{\xi\frac{k-1}{k}\int_\Omega  u^{k}w(x,s_0)e^{-\int_0^t v(x,s)ds}
(v(x,t)-v_0(x)+\int_0^t v(x,s)ds}\\
\leq&\displaystyle{\xi\frac{k-1}{k}\|w_0\|_{L^\infty(\Omega)}\int_\Omega  u^{k}(v+1).}\\
%&+\displaystyle{\frac{k-1}{k}\int_\Omega  u^{k}w(x,s_0)e^{-\int_0^t\Delta v(x,s)ds}.}\\
%\leq&\displaystyle{\int_\Omega (|f(x,t)|+L)|u|^{q+1} {}}\\
%\leq&\displaystyle{\int_\Omega (k_1|u|^\vartheta+k_2)|u|^{q+1} {}}\\
%=&\displaystyle{\int\int\triangle J(x-y)u(y)(u|u|^q(x))dy{}.}
%\leq&\displaystyle{(|g|_{L^\infty(0,\omega; L^\infty(\Omega))}+1)(|\Omega|+1)^{\frac{1}{2}}|u|^{q+1}_{q+2}.}\\
%\leq&\displaystyle{\frac{q+1}{q+2}((|f|_{L^\infty(0,\omega; L^\infty(\Omega))}+1)(|\Omega|+1)^{\frac{1}{2}})^{\frac{q+2}{q+1}}|u|^{q+2}_{q+2}+\frac{1}{q+2}}\\
%\leq&\displaystyle{\frac{q+1}{q+2}(|f|_{L^\infty(0,\omega; L^\infty(\Omega))}+1)^2(|\Omega|+1)|u|^{q+2}_{q+2}+\frac{1}{q+2}.}\\
\end{array}
\label{vbgncz2.5xx1fnasddccfggvvfddfghhsskkosxdcfvppgghh512}
\end{equation}
Here we have use the fact that $\frac{t}{e^t}\leq1$ (for all $t\geq0$).
Collecting \dref{vbgncz2.5xx1fnasddccvvfsskkosxdcfvppgghh512} with \dref{vbgncz2.5xx1fnasddccfggvvfddfghhsskkosxdcfvppgghh512}, we can get
the result.
\end{proof}

\begin{lemma}\label{lemma45630223116}
Let \begin{equation}
{A}_1=\frac{1}{ \delta+1}\left[\frac{ \delta+1}{ \delta}\right]^{- \delta }\left(\frac{\delta-1}{\delta} \right)^{ \delta+1},
\label{zjscz2.5297x9630222211444125}
\end{equation}
 $H(y)=y+{A}_1y^{- \delta }\chi^{ \delta+1}C_{ \delta+1}$ and $\tilde{H}(y)=y+{A}_1y^{- \delta }C_\beta^{\delta+1}C_{ \delta+1}$
 for $y>0.$
For any fixed $\delta\geq1,\chi,C_\beta,C_{\delta+1}>0,$
%let $$H(y)=y+\frac{1}{r+\delta-1}\left[\frac{r+\delta-1}{r+\delta-2}\right]^{-(r+\delta-2)}y^{-(r+\delta-2)}\left(\frac{\chi M_\psi}{r+\delta-2} \right)^{r+\delta-1}C_{r+\delta-1}(y>0).$$
Then $$
\min_{y>0}H(y)=\frac{(\delta-1)}{\delta}C_{\delta+1}^{\frac{1}{\delta+1}}\chi$$
and
\begin{equation}\label{gedrrrrsderrfggddffffnjjmmx1.731426677gg}\min_{y>0}\tilde{H}(y)=\frac{(\delta-1)}{\delta}C_{\delta+1}^{\frac{1}{\delta+1}}C_\beta.
\end{equation}
%Let \begin{equation}
%{A}_1=\frac{1}{ \delta+1}\left[\frac{ \delta+1}{ \delta}\right]^{- \delta }\left(\frac{\delta-1}{\delta} \right)^{ \delta+1}
%\label{dffffyyuuzjscz2.5297x9630222211444125}
%\end{equation}
%and $H(y)=y+
%{A}_1y^{- \delta }C_\beta^{\delta+1}C_{ \delta+1}$ for $y>0.$
%For any fixed $\delta\geq1,\chi,C_{\delta+1}>0,$
%%let $$H(y)=y+\frac{1}{r+\delta-1}\left[\frac{r+\delta-1}{r+\delta-2}\right]^{-(r+\delta-2)}y^{-(r+\delta-2)}\left(\frac{\chi M_\psi}{r+\delta-2} \right)^{r+\delta-1}C_{r+\delta-1}(y>0).$$
%Then $$\min_{y>0}H(y)=\frac{(\delta-1)}{\delta}C_{\delta+1}^{\frac{1}{\delta+1}}\chi.$$
\end{lemma}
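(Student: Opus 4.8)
The plan is to treat this as a routine one‑variable minimization, carried out once for $H$ and then repeated verbatim for $\tilde H$ with $\chi$ replaced by $C_\beta$ (nothing in the argument uses any property of $\chi$ beyond positivity). First I would record that, for $\delta>1$, the constant $A_1$ in \dref{zjscz2.5297x9630222211444125} is strictly positive, so that $H$ is smooth and strictly convex on $(0,\infty)$ (it is the sum of the linear term $y$ and the strictly convex term $A_1\chi^{\delta+1}C_{\delta+1}y^{-\delta}$ with positive coefficient), with $H(y)\to+\infty$ both as $y\to 0^+$ and as $y\to+\infty$. Hence the global minimum is attained at the unique zero of
\[
H'(y)=1-\delta A_1\chi^{\delta+1}C_{\delta+1}\,y^{-\delta-1},
\]
namely $y_\ast=(\delta A_1\chi^{\delta+1}C_{\delta+1})^{1/(\delta+1)}=(\delta A_1 C_{\delta+1})^{1/(\delta+1)}\chi$. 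The degenerate case $\delta=1$ is handled separately and trivially: there $A_1=0$, $H(y)=y$, and both sides of the asserted identity equal $0$.

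The second step is to evaluate $H(y_\ast)$. Using the defining relation $y_\ast^{\delta+1}=\delta A_1\chi^{\delta+1}C_{\delta+1}$, the second term of $H$ at $y_\ast$ equals $A_1\chi^{\delta+1}C_{\delta+1}\,y_\ast^{-\delta}=y_\ast/\delta$, so that
\[
\min_{y>0}H(y)=H(y_\ast)=y_\ast+\frac{y_\ast}{\delta}=\frac{\delta+1}{\delta}\,(\delta A_1 C_{\delta+1})^{1/(\delta+1)}\chi.
\]
It then remains only to simplify the constant, and this is the one place where a short computation is needed: substituting the definition of $A_1$ gives
\[
\delta A_1=\left(\frac{\delta}{\delta+1}\right)^{\delta+1}\left(\frac{\delta-1}{\delta}\right)^{\delta+1}=\left(\frac{\delta-1}{\delta+1}\right)^{\delta+1},
\]
whence $(\delta A_1)^{1/(\delta+1)}=\frac{\delta-1}{\delta+1}$ and therefore $H(y_\ast)=\frac{\delta+1}{\delta}\cdot\frac{\delta-1}{\delta+1}C_{\delta+1}^{1/(\delta+1)}\chi=\frac{\delta-1}{\delta}C_{\delta+1}^{1/(\delta+1)}\chi$, as claimed.

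Finally I would observe that the computation for $\tilde H(y)=y+A_1 C_\beta^{\delta+1}C_{\delta+1}y^{-\delta}$ is word for word the same with $C_\beta$ in place of $\chi$, which yields \dref{gedrrrrsderrfggddffffnjjmmx1.731426677gg}. I do not expect any genuine obstacle here; the lemma's only purpose is to package the exact value of this minimum in the form that the later $L^{q_0}$‑estimates need, and the seemingly opaque choice of $A_1$ in \dref{zjscz2.5297x9630222211444125} is precisely what makes $(\delta A_1)^{1/(\delta+1)}$ collapse to $\frac{\delta-1}{\delta+1}$.
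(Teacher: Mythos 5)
Your proof is correct and follows essentially the same route as the paper: compute $H'$, locate the unique critical point $y_\ast=(\delta A_1C_{\delta+1})^{1/(\delta+1)}\chi$, use the behaviour of $H$ at $0^+$ and $+\infty$ (you add convexity, which is equivalent here) to identify it as the global minimizer, and evaluate $H(y_\ast)$, the choice of $A_1$ making the constant collapse to $\frac{\delta-1}{\delta}C_{\delta+1}^{1/(\delta+1)}$. Your shortcut of writing the second term at $y_\ast$ as $y_\ast/\delta$ and your explicit handling of the degenerate case $\delta=1$ (where $A_1=0$ and the infimum is $0$) are minor streamlinings of the paper's computation, not a different argument.
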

\begin{proof}
%Let
%$\rho_0=r+\delta-2$ and
%\begin{equation}
%{A}_1=\frac{1}{r+\delta-1}\left[\frac{r+\delta-1}{r+\delta-2}\right]^{-(r+\delta-2)}.
%\label{zjscz2.5297x9630222211444125}
%\end{equation}
%Then $H(y)$ will be
%$$H(y)=y+A_1y^{-\rho_0}\left(\frac{\chi M_\psi}{\rho_0} \right)^{\rho_0+1}C_{\rho_0+1}.$$
It is easy to verify that $$H'(y)=1- A_1\delta C_{\delta+1}\left(\frac{\chi }{y} \right)^{\delta+1}.$$
Let $H'(y)=0$, we have
$$y=\left(A_1C_{\delta+1}\delta\right)^{\frac{1}{\delta+1}}\chi.$$
On the other hand, by $\lim_{y\rightarrow0^+}H(y)=+\infty$ and $\lim_{y\rightarrow+\infty}H(y)=+\infty$, we have
\begin{equation}\label{grttttttddffffnjjmmghuuuiiihhhhx1.731426677gg}
\begin{array}{rl}
\min_{y>0}H(y)=H[\left(A_1C_{\delta+1}\delta\right)^{\frac{1}{\delta+1}}\chi]=&\displaystyle{\left(A_1C_{\delta+1}\right)^{\frac{1}{\delta+1}}
(\delta^{\frac{1}{\delta+1}}+\delta^{-\frac{\delta}{\delta+1}})\chi}\\
=&\displaystyle{\frac{(\delta-1)}{\delta}C_{\delta+1}^{\frac{1}{\delta+1}}\chi.}\\
\end{array}
\end{equation}
Employing  the same arguments as in the proof of \dref{grttttttddffffnjjmmghuuuiiihhhhx1.731426677gg},  we conclude \dref{gedrrrrsderrfggddffffnjjmmx1.731426677gg}.
\end{proof}

\begin{lemma}\label{lemma45630223}
Let $r=2$ and  $(u,v,w)$ be a solution to \dref{1.1} on $(0,T_{max})$.
If
  \begin{equation}\label{gddffffnjjmmx1.731426677gg}
\mu>\frac{(N-2)_{+}}{N}(\chi+C_{\beta}) C^{\frac{1}{\frac{N}{2}+1}}_{\frac{N}{2}+1},
\begin{array}{ll}\\
 \end{array}
\end{equation}
%or
%\begin{equation}\label{99gddffffnjjmmx1.731426677gg}
%\mu~~~\mbox{is appropriately large},
%\begin{array}{ll}
% \end{array}
%\end{equation}
%Assume that    $\mu>\frac{(N-2)_+}{N}\chi C_{\frac{N}{2}+1}^{\frac{1}{\frac{N}{2}+1}}$.
% and $T>0.$
%$\tau=\min\{1,\frac{1}{2}T_{max}\}$.
then %for any $T\in(s,T_{max})$,
 for all $p>1$,
there exists a positive constant $C:=C(p,|\Omega|,\mu,\chi,\xi,\beta)$ such that %for any $T\in(0,T_{max})$ and
% the solution of
%\dref{1.1} satisfies
%Then for all $k\in(1,([r]+1)N)$ one can
%find a positive constant $C$ such that
\begin{equation}
\int_{\Omega}u^p(x,t)dx\leq C ~~~\mbox{for all}~~ t\in(0,T_{max}).
\label{zjscz2.5297x96302222114}
\end{equation}
%Moreover, assume $T>0$, then there exists a positive constant such that
%\begin{equation}
%\int_t^{t+\tau}\int_{\Omega}u^{\delta+1}(x,t)dxds\leq C(T)~~~\mbox{for all}~~ t\in(0,T).
%\label{zjscz2.5297ssdeeex96302222114}
%\end{equation}
%holds.
\end{lemma}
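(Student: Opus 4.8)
The plan is to derive a differential inequality for $\int_\Omega u^k$ for an arbitrary fixed $k>1$, absorb the chemotaxis and haptotaxis contributions using the maximal Sobolev regularity estimate for $v$ together with the elementary minimization identity of Lemma \ref{lemma45630223116}, and then close the estimate by a Grönwall-type argument. First I would test the first equation of \dref{1.1} against $u^{k-1}$ and integrate by parts, producing the good dissipation term $-\frac{4(k-1)}{k^2}\int_\Omega|\nabla u^{k/2}|^2$, the chemotaxis term $\chi(k-1)\int_\Omega u^{k-1}\nabla u\cdot\nabla v$, the haptotaxis term $-\xi\int_\Omega u^{k-1}\nabla\cdot(u\nabla w)$, and the logistic contribution $\int_\Omega u^{k-1}\cdot u(1-u-w)\le \int_\Omega u^k-\mu\int_\Omega u^{k+1}$ (using $r=2$, $w\ge0$, normalizing $a=1$). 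For the haptotaxis term I would invoke Lemma \ref{zsxxcdvlemma45630} to bound it by $C_\beta(\frac{k-1}{k}\int_\Omega u^k(v+1)+k\int_\Omega u^{k-1}|\nabla u|)$, and then Young's inequality turns $k\int_\Omega u^{k-1}|\nabla u|$ into $\varepsilon\int_\Omega|\nabla u^{k/2}|^2+C(\varepsilon)\int_\Omega u^k$ and $\frac{k-1}{k}\int_\Omega u^k(v+1)$ into a term of the form $\frac{k-1}{k}\int_\Omega u^k v$ plus lower order.

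Next I would handle the two terms of the form (const)$\cdot\frac{k-1}{k}\int_\Omega u^k\,z$ with $z\in\{v,|\nabla v|\}$ (the chemotaxis term becomes $-\chi\frac{k-1}{k}\int_\Omega u^k\Delta v$ after integration by parts, but it is cleaner to keep $\chi(k-1)\int_\Omega u^{k-1}\nabla u\cdot\nabla v$ and split it as $\frac{\varepsilon}{?}\int|\nabla u^{k/2}|^2 + \frac{k-1}{k}\cdot\frac{\chi^2}{4\varepsilon}\int_\Omega u^k|\nabla v|^2$, or — following the paper's hint — keep everything in the form $\int_\Omega u^k|\nabla v|^2$). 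The point is that after these reductions we are left with controlling $\int_\Omega u^k|\nabla v|^2$. Here I would use the Gagliardo–Nirenberg inequality to interpolate $\|u^{k/2}\|$ and, crucially, the maximal Sobolev regularity: since $v_t-\Delta v+v=u$ and $\int_0^T\int_\Omega u^r$ is controlled by Lemma \ref{wsdelemma45}, one gets space-time $L^{q_0}$ bounds on $\Delta v$ and $v$ with $q_0=\frac N2+1$, hence on $\nabla v$ in a suitable mixed norm. The term $\int_\Omega u^k|\nabla v|^2$ is then estimated by Hölder against $\|\nabla v\|_{L^{2q_0/(q_0-1)}}^2$ times $\|u^{k/2}\|^2$ in an appropriate $L^{2q_0/(q_0-1)}$-type norm, and Gagliardo–Nirenberg plus Young's inequality convert this into $\eta\int_\Omega|\nabla u^{k/2}|^2 + (\text{coefficient})\int_\Omega u^{k+1} + (\text{l.o.t.})$.

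The decisive point — and the main obstacle — is the bookkeeping of constants: the coefficient in front of $\int_\Omega u^{k+1}$ produced by absorbing the combined chemotaxis + haptotaxis terms must be strictly smaller than $\mu$ for at least the borderline value $k_0=q_0=\frac N2+1$ (or rather, one needs it for a range of $k$ exceeding $N/2$). This is exactly where Lemma \ref{lemma45630223116} enters: the optimization $\min_{y>0}(y+A_1y^{-\delta}\chi^{\delta+1}C_{\delta+1})=\frac{\delta-1}{\delta}C_{\delta+1}^{1/(\delta+1)}\chi$ (and the analogous one with $C_\beta$) shows that the best constant one can extract for the chemotaxis piece is $\frac{(N-2)_+}{N}\chi\, C_{N/2+1}^{1/(N/2+1)}$ and for the haptotaxis piece $\frac{(N-2)_+}{N}C_\beta\, C_{N/2+1}^{1/(N/2+1)}$, whose sum is precisely $\mu^*$. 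So under \dref{gddffffnjjmmx1.731426677gg} there is a fixed $\eta>0$ slack, the dissipation term $-\frac{4(k-1)}{k^2}\int|\nabla u^{k/2}|^2$ absorbs all the $\eta\int|\nabla u^{k/2}|^2$ contributions, and the remaining inequality reads $\frac{d}{dt}\int_\Omega u^k + \int_\Omega u^k \le C(k)$; an ODE comparison then yields \dref{zjscz2.5297x96302222114} first for $k=k_0$. To reach \emph{all} $p>1$ one bootstraps: once $\|u\|_{L^{k_0}}$ is bounded, the maximal Sobolev regularity for $v$ upgrades $\|\nabla v\|_{L^q}$ for a larger $q$ (as in \dref{hnjmkk7101.2x1}), which in turn feeds into the $\int_\Omega u^k|\nabla v|^2$ estimate for larger $k$ with a now harmless coefficient (the $-\mu\int u^{k+1}$ term dominates for all $k$ once the gradient term is subcritical), and a finite iteration gives every $p$. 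I would carry out this last bootstrap loop explicitly but briefly, emphasizing that beyond the threshold exponent no smallness of $\mu$ is needed.
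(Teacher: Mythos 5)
Your high-level plan (establish an $L^{q_0}$ bound at a critical exponent $q_0>\frac N2$ using the minimization of Lemma \ref{lemma45630223116} to see that the threshold is exactly $\mu^*$, then bootstrap to all $p>1$ via improved bounds on $\nabla v$) is the paper's strategy, and your treatment of the haptotaxis term via Lemma \ref{zsxxcdvlemma45630} is correct. But the execution of the critical step has a genuine gap. First, your claim that, because $\int_0^T\int_\Omega u^r$ is controlled by Lemma \ref{wsdelemma45}, ``one gets space-time $L^{q_0}$ bounds on $\Delta v$ and $v$ with $q_0=\frac N2+1$'' is false: with $r=2$, Lemma \ref{wsdelemma45} and maximal regularity give $\Delta v\in L^2_{t,x}$ only, which is far too weak. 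Second, the route you then describe for $\int_\Omega u^k|\nabla v|^2$ --- H\"older against $\|\nabla v\|_{L^{2q_0}}$ plus Gagliardo--Nirenberg --- is circular at this stage, because the bound $\|\nabla v(t)\|_{L^{2q_0}}\le C$ (see \dref{ffgbbcz2.5ghjusseeeddd48cfg924ghyuji}) is itself a \emph{consequence} of the $L^{q_0}$ bound on $u$ that you are trying to prove; moreover the constants coming out of Gagliardo--Nirenberg would bear no relation to $C_{q_0+1}^{1/(q_0+1)}$, so the sharp condition \dref{gddffffnjjmmx1.731426677gg} could not be reached this way. In the paper that H\"older/GN argument (cf.\ \dref{cz2.57151hhkkhhhjukildrfthjjhhhhh}) is the \emph{second} stage, used only for general $p$ after the $L^{q_0}$ bound is in hand.

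The missing mechanism is the following. At $l=q_0$ one must keep the chemotaxis term in the form $\frac{l-1}{l}\chi\int_\Omega u^l|\Delta v|$, apply Young's inequality with a free parameter $y$ to get $y\int u^{l+1}+A_1y^{-l}\chi^{l+1}\int|\Delta v|^{l+1}$, and crucially one cannot close this pointwise in time, since $\|\Delta v(t)\|_{L^{l+1}}$ is not controlled by $\|u(t)\|_{L^{l+1}}$ at the same instant. Instead one integrates the differential inequality against $e^{-(l+1)(t-s)}$ (variation of constants, as in \dref{cz2.5kk1214114114rrgg}) and invokes the exponentially weighted maximal Sobolev regularity estimate \dref{cz2.5bbv114} of Lemma \ref{lemma45xy1222232} with source $g=u$, which converts $\int_{s_0}^te^{(l+1)s}\int_\Omega|\Delta v|^{l+1}$ into $C_{l+1}\int_{s_0}^te^{(l+1)s}\int_\Omega u^{l+1}$ plus data; the same is needed for the $\int u^l v$ piece of the haptotaxis bound. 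Only after this time-integrated coupling does the total coefficient of $\iint u^{l+1}$ become $y+A_1y^{-l}\chi^{l+1}C_{l+1}$ (plus the analogous $C_\beta$ expression), and only then does the minimization of Lemma \ref{lemma45630223116} produce exactly $\frac{l-1}{l}(\chi+C_\beta)C_{l+1}^{1/(l+1)}<\mu$. Without this structure your invocation of Lemma \ref{lemma45630223116} has nothing to act on, and the claimed ODE inequality $\frac{d}{dt}\int u^k+\int u^k\le C(k)$ does not follow.
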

\begin{proof}
Due to  $\mu>\frac{(N-2)_{+}}{N}(\chi+C_{\beta}) C^{\frac{1}{\frac{N}{2}+1}}_{\frac{N}{2}+1}$,  one can choose  ${q_0}>\frac{N}{2}$ %in \dref{cz2.5kk1214114114rrggkkll}
such that
\begin{equation}\mu>\frac{{q_0}-1}{{q_0}}(C_{\beta}+\chi) C_{{q_0}+1}^{\frac{1}{{q_0}+1}}.
\label{3333ddfffggcz2.5114114}
\end{equation}
Let $l=q_0.$
Multiplying the first equation of \dref{1.1}
  by $u^{{l}-1}$ and integrating over $\Omega$, % using
%  ($H_1$) and  \dref{4.1xx},
 we get
%Multiplying both sides of the first equation in \dref{1.1} by $u^{k-1}$ and integrating over $\Omega$, we arrive at
\begin{equation}
\begin{array}{rl}
&\displaystyle{\frac{1}{{l}}\frac{d}{dt}\|u\|^{{l}}_{L^{{l}}(\Omega)}+({{l}-1})\int_{\Omega}u^{{{l}-2}}|\nabla u|^2dx}
\\
=&\displaystyle{-\chi\int_\Omega \nabla\cdot( u\nabla v)
  u^{{l}-1} dx-\xi\int_\Omega \nabla\cdot( u\nabla w)
  u^{{l}-1} dx+
\int_\Omega   u^{{l}-1}(au-\mu u^2) dx,}\\
%\leq&\displaystyle{\int_\Omega (|f(x,t)|+L)|u|^{q+1} dx}\\
%\leq&\displaystyle{\int_\Omega (k_1|u|^\alpha+k_2)|u|^{q+1} dx}\\
%=&\displaystyle{\int\int\triangle J(x-y)u(y)(u|u|^q(x))dydx.}
%\leq&\displaystyle{(|g|_{L^\infty(0,\omega; L^\infty(\Omega))}+1)(|\Omega|+1)^{\frac{1}{2}}|u|^{q+1}_{q+2}.}\\
%\leq&\displaystyle{\frac{q+1}{q+2}((|f|_{L^\infty(0,\omega; L^\infty(\Omega))}+1)(|\Omega|+1)^{\frac{1}{2}})^{\frac{q+2}{q+1}}|u|^{q+2}_{q+2}+\frac{1}{q+2}}\\
%\leq&\displaystyle{\frac{q+1}{q+2}(|f|_{L^\infty(0,\omega; L^\infty(\Omega))}+1)^2(|\Omega|+1)|u|^{q+2}_{q+2}+\frac{1}{q+2}.}\\
\end{array}
\label{cz2.5114114}
\end{equation}
%Integrating by parts to  \dref{cz2.5} and using \dref{9162} and \dref{9162ccxxzz12}, we have
%which, together with \dref{9162} implies that
%\begin{equation*}
%\begin{array}{rl}
%&\displaystyle{\frac{1}{{l}}\frac{d}{dt}\|u\|^{{{l}}}_{L^{{l}}(\Omega)}+({{l}-1})\int_{\Omega}  u^{{{l}}-2}|\nabla u|^2dx}
%\\
%\leq&\displaystyle{-\chi\int_\Omega \nabla\cdot( u\nabla v)
%  u^{{l}-1} dx+
%\int_\Omega   u^{{l}-1}(au-bu^2) dx,}\\
%%\leq&\displaystyle{\int_\Omega (|f(x,t)|+L)|u|^{q+1} dx}\\
%%\leq&\displaystyle{\int_\Omega (k_1|u|^\alpha+k_2)|u|^{q+1} dx}\\
%%=&\displaystyle{\int\int\triangle J(x-y)u(y)(u|u|^q(x))dydx.}
%%\leq&\displaystyle{(|g|_{L^\infty(0,\omega; L^\infty(\Omega))}+1)(|\Omega|+1)^{\frac{1}{2}}|u|^{q+1}_{q+2}.}\\
%%\leq&\displaystyle{\frac{q+1}{q+2}((|f|_{L^\infty(0,\omega; L^\infty(\Omega))}+1)(|\Omega|+1)^{\frac{1}{2}})^{\frac{q+2}{q+1}}|u|^{q+2}_{q+2}+\frac{1}{q+2}}\\
%%\leq&\displaystyle{\frac{q+1}{q+2}(|f|_{L^\infty(0,\omega; L^\infty(\Omega))}+1)^2(|\Omega|+1)|u|^{q+2}_{q+2}+\frac{1}{q+2}.}\\
%\end{array}
%\end{equation*}
that is,
\begin{equation}
\begin{array}{rl}
&\displaystyle{\frac{1}{{l}}\frac{d}{dt}\|u\|^{{{l}}}_{L^{{l}}(\Omega)}+({{l}-1})\int_{\Omega}u^{{{l}-2}}|\nabla u|^2dx}
\\
\leq&\displaystyle{-\frac{{l}+1}{{l}}\int_{\Omega} u^{l} dx-\chi\int_\Omega \nabla\cdot( u\nabla v)
  u^{{l}-1} dx}\\
 &\displaystyle{-\xi\int_\Omega \nabla\cdot( u\nabla w)
  u^{{l}-1} dx+\int_\Omega \left(\frac{{l}+1}{{l}} u^{l}+  u^{{l}-1}(au-\mu u^2)\right) dx.}\\
%\leq&\displaystyle{\int_\Omega (|f(x,t)|+L)|u|^{q+1} dx}\\
%\leq&\displaystyle{\int_\Omega (k_1|u|^\alpha+k_2)|u|^{q+1} dx}\\
%=&\displaystyle{\int\int\triangle J(x-y)u(y)(u|u|^q(x))dydx.}
%\leq&\displaystyle{(|g|_{L^\infty(0,\omega; L^\infty(\Omega))}+1)(|\Omega|+1)^{\frac{1}{2}}|u|^{q+1}_{q+2}.}\\
%\leq&\displaystyle{\frac{q+1}{q+2}((|f|_{L^\infty(0,\omega; L^\infty(\Omega))}+1)(|\Omega|+1)^{\frac{1}{2}})^{\frac{q+2}{q+1}}|u|^{q+2}_{q+2}+\frac{1}{q+2}}\\
%\leq&\displaystyle{\frac{q+1}{q+2}(|f|_{L^\infty(0,\omega; L^\infty(\Omega))}+1)^2(|\Omega|+1)|u|^{q+2}_{q+2}+\frac{1}{q+2}.}\\
\end{array}
\label{cz2.5kk1214114114}
\end{equation}

Hence, by Young inequality, it reads that
\begin{equation}
\begin{array}{rl}
&\displaystyle{\int_\Omega  \left(\frac{{l}+1}{{l}} u^{l}+ u^{{l}-1}(au-\mu u^2)\right) dx}\\
\leq &\displaystyle{\frac{{l}+1}{{l}}\int_\Omega u^{l} dx+a\int_\Omega    u^{{l}}dx- \mu\int_\Omega  u^{{{l}+1}}dx}\\
\leq &\displaystyle{(\varepsilon_1- \mu)\int_\Omega u^{{{l}+1}} dx+C_1(\varepsilon_1,{l}),}
%\leq&\displaystyle{a\int_\Omega   u^{{{l}-1}}dx-b\int_\Omega u^{q+r+{{l}-1}}_\varepsilon dx+b|\Omega|.}\\
%=&\displaystyle{-(k-1)\chi M_{\psi}\int_\Omega u^{k}(v-u^q) dx}\\
%%\leq&\displaystyle{\int_\Omega (|f(x,t)|+L)|u|^{q+1} dx}\\
%%\leq&\displaystyle{\int_\Omega (k_1|u|^\alpha+k_2)|u|^{q+1} dx}\\
%\leq&\displaystyle{-(k-1)\chi M_{\psi}\int_\Omega u^{q+k}dx.}\\
%%\leq&\displaystyle{(|g|_{L^\infty(0,\omega; L^\infty(\Omega))}+1)(|\Omega|+1)^{\frac{1}{2}}|u|^{q+1}_{q+2}.}\\
%%\leq&\displaystyle{\frac{q+1}{q+2}((|f|_{L^\infty(0,\omega; L^\infty(\Omega))}+1)(|\Omega|+1)^{\frac{1}{2}})^{\frac{q+2}{q+1}}|u|^{q+2}_{q+2}+\frac{1}{q+2}}\\
%%\leq&\displaystyle{\frac{q+1}{q+2}(|f|_{L^\infty(0,\omega; L^\infty(\Omega))}+1)^2(|\Omega|+1)|u|^{q+2}_{q+2}+\frac{1}{q+2}.}\\
\end{array}
\label{cz2.563011228ddff}
\end{equation}
where $$
\varepsilon_1=\frac{1}{4}(\mu -\frac{{q_0}-1}{{q_0}}(C_{\beta}+\chi)C_{{q_0}+1}^{\frac{1}{{q_0}+1}})>0
$$
and
$$C_1(\varepsilon_1,{l})=\frac{1}{{l}+1}\left(\varepsilon_1\frac{{l}+1}{{l}}\right)^{-{l} }
\left(\frac{{l}+1}{{l}}+a\right)^{{l}+1 }|\Omega|.$$
%and
%$$C_2(\varepsilon_2,{r})=\frac{r}{{l}+1}\left(\varepsilon_2\frac{{l}+1}{{l}-1}\right)^{-\frac{{l}-1}{r}}
%\left(a+b\right)^{\frac{{l}+1}{r}}|\Omega|.$$
%
%

Next,
integrating by parts to the first term on the right hand side of \dref{cz2.5114114},
we obtain %from the second equation in \dref{1.1}
\begin{equation}
\begin{array}{rl}
&\displaystyle{-\chi\int_\Omega \nabla\cdot( u\nabla v)
  u^{{l}-1} dx}
\\
=&\displaystyle{({{l}-1})\chi\int_\Omega  u^{{{l}-1}}\nabla u\cdot\nabla v dx}
\\
%=&\displaystyle{({{l}-1})\chi\int_\Omega  \nabla \Psi(u)\cdot\nabla v dx}
%\\
%=&\displaystyle{-({{l}-1})\chi\int_\Omega \tilde{\Psi}(u){r} v dx}
%\\
%%=&\displaystyle{-({{l}-1})\chi\int_\Omega \Psi(u)(v_t+v-u) dx}
%%\\
%\leq&\displaystyle{({{l}-1})\chi\int_\Omega \tilde{\Psi}(u)|{r} v| dx}
%\\
\leq&\displaystyle{\frac{{{l}-1}}{{l}}\chi \int_\Omega u^{{l}}|\Delta v| dx.}
\\
\end{array}
\label{cz2.563019114}
\end{equation}
%
%Setting

Next, due to \dref{vbgncz2.5xx1ffgghh512} and the Young inequality,  we derive that %
there exist positive constant $C_2:=(\frac{1}{2}\frac{1}{l-1}C_\beta^2l^2+C_\beta)$ and $C_3:=\frac{1}{l+1}(\varepsilon_3\frac{l+1}{l})^{-l}C_2^{l+1}$ such that
\begin{equation}
\begin{array}{rl}
&-\xi\displaystyle{\int_\Omega \nabla\cdot( u\nabla w)
  u^{{l}-1} dx}\\
  \leq &\displaystyle{C_\beta(\frac{l-1}{l}
\int_\Omega  u^{l}(v+1)+l\int_{\Omega} u^{l-1}|\nabla u|)}
\\
\leq &\displaystyle{\frac{l-1}{2}\int_{\Omega} u^{l-2}|\nabla u|^2+(\frac{1}{2}\frac{1}{l-1}C_\beta^2l^2+\frac{l-1}{l}C_\beta)\int_\Omega  u^{l}+
C_\beta \frac{l-1}{l}\int_\Omega  u^{l}v}
\\
%\leq &\displaystyle{\frac{l-1}{2}\int_{\Omega} u^{l-2}|\nabla u|^2+C_2\int_\Omega  u^{l}+C_\beta
%\int_\Omega  u^{l}v}
%\\
\leq &\displaystyle{\frac{l-1}{2}\int_{\Omega} u^{l-2}|\nabla u|^2+C_2\int_\Omega  u^{l}+
C_\beta \frac{l-1}{l}\int_\Omega  u^{l}v}
\\
\leq &\displaystyle{\frac{l-1}{2}\int_{\Omega} u^{l-2}|\nabla u|^2+(\varepsilon_2+\varepsilon_3)\int_\Omega  u^{l+1}}\\
&+\displaystyle{\frac{1}{l+1}(\varepsilon_2\frac{l+1}{l})^{-l}C_\beta^{l+1}\left(\frac{l-1}{l}\right)^{l+1}\int_\Omega v^{l+1}+C_3,}\\
\end{array}
\label{czssddee2.5630ssddddd19114}
\end{equation}
where $\varepsilon_2:=\tilde{\lambda}_0$, $
\varepsilon_3=\frac{1}{4}(\mu -\frac{{q_0}-1}{{q_0}}(C_{\beta}+\chi)C_{{q_0}+1}^{\frac{1}{{q_0}+1}})>0
$
and
\begin{equation}
\tilde{\lambda}_0:=\left(A_1C_{{l}+1}{l}\right)^{\frac{1}{{l}+1}}C_\beta.
\label{cz2.56ssd30er191rrrr1ddfrttt4}
\end{equation}
Here  $A_1$ is given by \dref{zjscz2.5297x9630222211444125}.
%Here $\tilde{\lambda}_0$ is the same as
%for any  positive constant $\varepsilon_2$ and $\varepsilon_3.$

%where $\tilde{\Psi}(u)=\int_0^u\psi(\tau)(\tau)^{{{r}-2}}d\tau$. Here, we have used the fact that $r>1.$
Now, let \begin{equation}
\lambda_0:=\left(A_1C_{{l}+1}{l}\right)^{\frac{1}{{l}+1}}\chi.
\label{cz2.56ssd30er191rrrr14}
\end{equation}
%and
%where $A_1$ is given by \dref{zjscz2.5297x9630222211444125}.
%For any fixed $\lambda_0>0$,
While from \dref{cz2.563019114} and the Young inequality, we
have
%then applying the Young inequality to \dref{cz2.563019114}, we conclude that
\begin{equation}
\begin{array}{rl}
&\displaystyle{-\chi\int_\Omega \nabla\cdot( u\nabla v)
  u^{{l}-1} dx}
\\
\leq&\displaystyle{\lambda_0\int_\Omega  u^{{l}+1}dx+\frac{1}{ { {l}+1}}\left[\lambda_0\frac{ { {l}+1}}{ {l}}\right]^{- {l} }\left(\frac{{l}-1}{{l}}\chi \right)^{ { {l}+1}}\int_\Omega |\Delta v|^{ { {l}+1}} dx}
\\
=&\displaystyle{\lambda_0\int_\Omega  u^{{l}+1}dx+{A}_1\lambda_0^{- {l} }\chi^{ { {l}+1}}\int_\Omega |\Delta v|^{ { {l}+1}} dx,}
\\
%\leq&\displaystyle{M_{\psi}\frac{{{l}-1}}{{{l}-1}+\beta}\chi\int_\Omega (u)^{{{l}-1}+\beta}|v_t|+(u)^{{{l}-1}+\beta+1} dx}
%\\
%=&\displaystyle{M_{\psi}\frac{{\mu-{\kappa_r}}}{{\mu-{\kappa_r}}+\beta}\chi\int_\Omega  \nabla (u)^{{\mu-{\kappa_r}}+\beta}\cdot\nabla v dx}\\
%=&\displaystyle{-M_{\psi}\frac{{\mu-{\kappa_r}}}{{\mu-{\kappa_r}}+\beta}\chi\int_\Omega (u)^{{\mu-{\kappa_r}}+\beta}{r} v dx}\\
%=&\displaystyle{-M_{\psi}\frac{{\mu-{\kappa_r}}}{{\mu-{\kappa_r}}+\beta}\chi\int_\Omega (u)^{{\mu-{\kappa_r}}+\beta}(v_t+v-u) dx}\\
%\leq&\displaystyle{\int_\Omega (|f(x,t)|+L)|u|^{q+1} dx}\\
%\leq&\displaystyle{\int_\Omega (k_1|u|^\alpha+k_2)|u|^{q+1} dx}\\
%\leq&\displaystyle{\chi M_{\psi}\int_\Omega (u)^{{\mu-{\kappa_r}+\beta+1}}dx+\chi M_{\psi}\int_\Omega (u)^{{\mu-{\kappa_r}}+\beta}|v_t| dx}\\
%%\leq&\displaystyle{\chi\int_\Omega u^{q+k}dx}\\
%\leq&\displaystyle{\frac{b}{2^{r+1}}\int_\Omega (u)^{{\mu-{\kappa_r}} +r}dx+C_1(\mu)+C_2(\mu)\int_{\Omega}|v_t|^{^{\frac{{\mu-{\kappa_r}}+r}{ r-\beta}} }dx,}\\
%\leq&\displaystyle{(|g|_{L^\infty(0,\omega; L^\infty(\Omega))}+1)(|\Omega|+1)^{\frac{1}{2}}|u|^{q+1}_{q+2}.}\\
%\leq&\displaystyle{\frac{q+1}{q+2}((|f|_{L^\infty(0,\omega; L^\infty(\Omega))}+1)(|\Omega|+1)^{\frac{1}{2}})^{\frac{q+2}{q+1}}|u|^{q+2}_{q+2}+\frac{1}{q+2}}\\
%\leq&\displaystyle{\frac{q+1}{q+2}(|f|_{L^\infty(0,\omega; L^\infty(\Omega))}+1)^2(|\Omega|+1)|u|^{q+2}_{q+2}+\frac{1}{q+2}.}\\
\end{array}
\label{cz2.563019114gghh}
\end{equation}
where
%\begin{equation}
%{A}_1=\frac{1}{ {l}+1}\left[\frac{ {l}+1}{ {r}}\right]^{- {r} }\left(\frac{{l}-1}{{l}} \right)^{ {l}+1}.
%\label{cz2.56301911fffffffffhh4gghh}
%\end{equation}
${A}_1$ is given by \dref{zjscz2.5297x9630222211444125}.
Thus, inserting \dref{cz2.563011228ddff}, \dref{czssddee2.5630ssddddd19114} and \dref{cz2.563019114gghh} into \dref{cz2.5kk1214114114}, we get
\begin{equation*}
\begin{array}{rl}
\displaystyle\frac{1}{{l}}\displaystyle\frac{d}{dt}\|u\|^{{{l}}}_{L^{{l}}(\Omega)}+\frac{l-1}{2}\int_{\Omega} u^{l-2}|\nabla u|^2\leq&\displaystyle{(\varepsilon_1+\tilde{\lambda}_0+\varepsilon_3+\lambda_0- \mu)\int_\Omega u^{{{l}+1}} dx-\frac{{l}+1}{{l}}\int_{\Omega} u^{l} dx}\\
&+\displaystyle{{A}_1\lambda_0^{- {l} }\chi^{ {l}+1}\int_\Omega |\Delta v|^{ {l}+1} dx}\\
&+\displaystyle{{A}_1\tilde{\lambda}_0^{- {l} }C_\beta^{ { {l}+1}}\int_\Omega v^{l+1}+
C_1+C_3.}\\
%\leq&\displaystyle{\int_\Omega (|f(x,t)|+L)|u|^{q+1} dx}\\
%\leq&\displaystyle{\int_\Omega (k_1|u|^\alpha+k_2)|u|^{q+1} dx}\\
%=&\displaystyle{\int\int\triangle J(x-y)u(y)(u|u|^q(x))dydx.}
%\leq&\displaystyle{(|g|_{L^\infty(0,\omega; L^\infty(\Omega))}+1)(|\Omega|+1)^{\frac{1}{2}}|u|^{q+1}_{q+2}.}\\
%\leq&\displaystyle{\frac{q+1}{q+2}((|f|_{L^\infty(0,\omega; L^\infty(\Omega))}+1)(|\Omega|+1)^{\frac{1}{2}})^{\frac{q+2}{q+1}}|u|^{q+2}_{q+2}+\frac{1}{q+2}}\\
%\leq&\displaystyle{\frac{q+1}{q+2}(|f|_{L^\infty(0,\omega; L^\infty(\Omega))}+1)^2(|\Omega|+1)|u|^{q+2}_{q+2}+\frac{1}{q+2}.}\\
\end{array}
\end{equation*}
For any $t\in (s_0,T_{max})$,
employing the variation-of-constants formula to the above inequality, we obtain
\begin{equation}
\begin{array}{rl}
&\displaystyle{\frac{1}{{l}}\|u(t) \|^{{{l}}}_{L^{{l}}(\Omega)}}
\\
\leq&\displaystyle{\frac{1}{{l}}e^{-( { {l}+1})(t-s_0)}\|u(s_0) \|^{{{l}}}_{L^{{l}}(\Omega)}+(\varepsilon_1+\tilde{\lambda}_0+\varepsilon_3+\lambda_0- \mu)\int_{s_0}^t
e^{-( { {l}+1})(t-s)}\int_\Omega u^{{{l}+1}} dxds}\\
&+\displaystyle{{A}_1\lambda_0^{- {l} }\chi^{ {l}+1}\int_{s_0}^t
e^{-( { {l}+1})(t-s)}\int_\Omega |\Delta v|^{ {l}+1} dxds}\\
&+\displaystyle{(C_1+C_3)\int_{s_0}^t
e^{-( { {l}+1})(t-s)}ds+{A}_1\tilde{\lambda}_0^{- {l} }C_\beta^{ { {l}+1}}\int_{s_0}^t
e^{-( { {l}+1})(t-s)}\int_\Omega v^{{{l}+1}} dxds}\\
\leq&\displaystyle{(\varepsilon_1+\tilde{\lambda}_0+\varepsilon_3+\lambda_0- \mu)\int_{s_0}^t
e^{-( { {l}+1})(t-s)}\int_\Omega u^{{{l}+1}} dxds}\\
&\displaystyle{+{A}_1\lambda_0^{- {l} }\chi^{ {l}+1}\int_{s_0}^t
e^{-( { {l}+1})(t-s)}\int_\Omega |\Delta v|^{ {l}+1} dxds}\\
&+\displaystyle{{A}_1\tilde{\lambda}_0^{- {l} }C_\beta^{ { {l}+1}}\int_{s_0}^t
e^{-( { {l}+1})(t-s)}\int_\Omega v^{{{l}+1}} dxds+C_4,}\\
%\leq&\displaystyle{\int_\Omega (|f(x,t)|+L)|u|^{q+1} dx}\\
%\leq&\displaystyle{\int_\Omega (k_1|u|^\alpha+k_2)|u|^{q+1} dx}\\
%=&\displaystyle{\int\int\triangle J(x-y)u(y)(u|u|^q(x))dydx.}
%\leq&\displaystyle{(|g|_{L^\infty(0,\omega; L^\infty(\Omega))}+1)(|\Omega|+1)^{\frac{1}{2}}|u|^{q+1}_{q+2}.}\\
%\leq&\displaystyle{\frac{q+1}{q+2}((|f|_{L^\infty(0,\omega; L^\infty(\Omega))}+1)(|\Omega|+1)^{\frac{1}{2}})^{\frac{q+2}{q+1}}|u|^{q+2}_{q+2}+\frac{1}{q+2}}\\
%\leq&\displaystyle{\frac{q+1}{q+2}(|f|_{L^\infty(0,\omega; L^\infty(\Omega))}+1)^2(|\Omega|+1)|u|^{q+2}_{q+2}+\frac{1}{q+2}.}\\
\end{array}
\label{cz2.5kk1214114114rrgg}
\end{equation}
%for all $t\in(s_0,T)$ and $s_0\leq1$,
where
$$C_4:=C_4(\varepsilon_1,\varepsilon_3,{l})=\frac{1}{{l}}\|u(s_0) \|^{{{l}}}_{L^{{l}}(\Omega)}+
 (C_1+C_3)\int_{s_0}^t
e^{-( { {l}+1})(t-s)}ds.$$
%Case $\mu>\frac{(N-2)_{+}}{N}\chi C^{\frac{1}{\frac{N}{2}+1}}_{\frac{N}{2}+1}~~\mbox{and} ~~N\leq4:$
Now, %choosing $\varepsilon_2:=\tilde{\lambda}_0$ in \dref{cz2.5kk1214114114rrgg},
by \dref{cz2.56ssd30er191rrrr1ddfrttt4},  Lemma \ref{lemma45xy1222232} and the second equation of \dref{1.1}, we have
\begin{equation}\label{cz2.5kke34567789999001214114114rrggjjkk}
\begin{array}{rl}
&\displaystyle{{A}_1\lambda_0^{- {l} }\chi^{ {l}+1}\int_{s_0}^t
e^{-( { {l}+1})(t-s)}\int_\Omega |\Delta v|^{ {l}+1} dxds}
\\
=&\displaystyle{{A}_1\lambda_0^{- {l} }\chi^{ {l}+1}e^{-( { {l}+1})t}\int_{s_0}^t
e^{( { {l}+1})s}\int_\Omega |\Delta v|^{ {l}+1} dxds}\\
%\leq&\displaystyle{{A}_1\lambda_0^{- {l} }\chi^{ {l}+1}e^{-( { {l}+1})t+[2-r]^+t}\int_{s_0}^t
%e^{( { {l}+1})s}\int_\Omega |\Delta v|^{ {l}+1} dxds}\\
\leq&\displaystyle{{A}_1\lambda_0^{- {l} }\chi^{ {l}+1}e^{-( { {l}+1})t}C_{ {l}+1}(\int_{s_0}^t
\int_\Omega e^{( { {l}+1})s}u^{ {l}+1} dxds+e^{( { {l}+1})s_0}\|v(s_0,t)\|^{ {l}+1}_{W^{2, { {l}+1}}})}\\
%=&\displaystyle{{A}_1\lambda_0^{- {l} }\chi^{ {l}+1}C_{ {l}+1}(\int_{s_0}^t
%\int_\Omega e^{-( { {l}+1})(t-s)} u^{{l}+1} dxds}\\
%&+\displaystyle{e^{( { {l}+1})s_0}\|v(s_0,t)\|^{ {l}+1}_{W^{2, { {l}+1}}})}\\
%\leq&\displaystyle{\int_\Omega (|f(x,t)|+L)|u|^{q+1} dx}\\
%\leq&\displaystyle{\int_\Omega (k_1|u|^\alpha+k_2)|u|^{q+1} dx}\\
%=&\displaystyle{\int\int\triangle J(x-y)u(y)(u|u|^q(x))dydx.}
%\leq&\displaystyle{(|g|_{L^\infty(0,\omega; L^\infty(\Omega))}+1)(|\Omega|+1)^{\frac{1}{2}}|u|^{q+1}_{q+2}.}\\
%\leq&\displaystyle{\frac{q+1}{q+2}((|f|_{L^\infty(0,\omega; L^\infty(\Omega))}+1)(|\Omega|+1)^{\frac{1}{2}})^{\frac{q+2}{q+1}}|u|^{q+2}_{q+2}+\frac{1}{q+2}}\\
%\leq&\displaystyle{\frac{q+1}{q+2}(|f|_{L^\infty(0,\omega; L^\infty(\Omega))}+1)^2(|\Omega|+1)|u|^{q+2}_{q+2}+\frac{1}{q+2}.}\\
\end{array}
\end{equation}
and
\begin{equation}\label{cz2.5kk1214114114rrggjjkk}
\begin{array}{rl}
&\displaystyle{{A}_1\tilde{\lambda}_0^{- {l} }C_\beta^{ { {l}+1}}\int_{s_0}^t
e^{-( { {l}+1})(t-s)}\int_\Omega v^{ {l}+1} dxds}
\\
=&\displaystyle{{A}_1\tilde{\lambda}_0^{- {l} }C_\beta^{ { {l}+1}}e^{-( { {l}+1})t}\int_{s_0}^t
e^{( { {l}+1})s}\int_\Omega  v^{ {l}+1} dxds}\\
%\leq&\displaystyle{{A}_1\lambda_0^{- {l} }\chi^{ {l}+1}e^{-( { {l}+1})t+[2-r]^+t}\int_{s_0}^t
%e^{( { {l}+1})s}\int_\Omega |\Delta v|^{ {l}+1} dxds}\\
\leq&\displaystyle{{A}_1\tilde{\lambda}_0^{- {l} }C_\beta^{ { {l}+1}}e^{-( { {l}+1})t}C_{ {l}+1}(\int_{s_0}^t
\int_\Omega e^{( { {l}+1})s}u^{ {l}+1} dxds+e^{( { {l}+1})s_0}\|v(s_0,t)\|^{ {l}+1}_{W^{2, { {l}+1}}})}\\
%=&\displaystyle{{A}_1\lambda_0^{- {l} }\chi^{ {l}+1}C_{ {l}+1}(\int_{s_0}^t
%\int_\Omega e^{-( { {l}+1})(t-s)} u^{{l}+1} dxds}\\
%&+\displaystyle{e^{( { {l}+1})s_0}\|v(s_0,t)\|^{ {l}+1}_{W^{2, { {l}+1}}})}\\
%\leq&\displaystyle{\int_\Omega (|f(x,t)|+L)|u|^{q+1} dx}\\
%\leq&\displaystyle{\int_\Omega (k_1|u|^\alpha+k_2)|u|^{q+1} dx}\\
%=&\displaystyle{\int\int\triangle J(x-y)u(y)(u|u|^q(x))dydx.}
%\leq&\displaystyle{(|g|_{L^\infty(0,\omega; L^\infty(\Omega))}+1)(|\Omega|+1)^{\frac{1}{2}}|u|^{q+1}_{q+2}.}\\
%\leq&\displaystyle{\frac{q+1}{q+2}((|f|_{L^\infty(0,\omega; L^\infty(\Omega))}+1)(|\Omega|+1)^{\frac{1}{2}})^{\frac{q+2}{q+1}}|u|^{q+2}_{q+2}+\frac{1}{q+2}}\\
%\leq&\displaystyle{\frac{q+1}{q+2}(|f|_{L^\infty(0,\omega; L^\infty(\Omega))}+1)^2(|\Omega|+1)|u|^{q+2}_{q+2}+\frac{1}{q+2}.}\\
\end{array}
\end{equation}
for all $t\in(s_0, T_{max})$.
By substituting \dref{cz2.5kke34567789999001214114114rrggjjkk}--\dref{cz2.5kk1214114114rrggjjkk} into \dref{cz2.5kk1214114114rrgg}, using \dref{cz2.56ssd30er191rrrr14} and Lemma \ref{lemma45630223116}, we get
\begin{equation}
\begin{array}{rl}
&\displaystyle{\frac{1}{{l}}\|u(t) \|^{{{l}}}_{L^{{l}}(\Omega)}}
\\
\leq&\displaystyle{(\varepsilon_1+\tilde{\lambda}_0+{A}_1\tilde{\lambda}_0^{- {l} }C_{\beta}^{ {l}+1}C_{ {l}+1}+\lambda_0+{A}_1\lambda_0^{- {l} }\chi^{ {l}+1}C_{ {l}+1}- \mu)\int_{s_0}^t
e^{-( {l}+1)(t-s)}\int_\Omega u^{{{l}+1}} dxds}\\
&+\displaystyle{{A}_1(\lambda_0^{- {l} }\chi^{ {l}+1}+\tilde{\lambda}_0^{- {l} }C_\beta^{ { {l}+1}})e^{-( {l}+1)(t-s_0)}C_{ {l}+1}\|v(s_0,t)\|^{ {l}+1}_{W^{2, { {l}+1}}}+C_4}\\
=&\displaystyle{(\varepsilon_1+\varepsilon_3+\frac{({l}-1)}{{l}}C_{{l}+1}^{\frac{1}{{l}+1}}C_{\beta}+\frac{({l}-1)}{{l}}C_{{l}+1}^{\frac{1}{{l}+1}}\chi- \mu)\int_{s_0}^t
e^{-( { {l}+1})(t-s)}\int_\Omega u^{{{l}+1}} dxds}\\
&+\displaystyle{{A}_1(\lambda_0^{- {l} }\chi^{ {l}+1}+\tilde{\lambda}_0^{- {l} }C_\beta^{ { {l}+1}})e^{-( { {l}+1})(t-s_0)}C_{ {l}+1}\|v(s_0,t)\|^{ {l}+1}_{W^{2, { {l}+1}}}+C_4.}\\
%\leq&\displaystyle{\int_\Omega (|f(x,t)|+L)|u|^{q+1} dx}\\
%\leq&\displaystyle{\int_\Omega (k_1|u|^\alpha+k_2)|u|^{q+1} dx}\\
%=&\displaystyle{\int\int\triangle J(x-y)u(y)(u|u|^q(x))dydx.}
%\leq&\displaystyle{(|g|_{L^\infty(0,\omega; L^\infty(\Omega))}+1)(|\Omega|+1)^{\frac{1}{2}}|u|^{q+1}_{q+2}.}\\
%\leq&\displaystyle{\frac{q+1}{q+2}((|f|_{L^\infty(0,\omega; L^\infty(\Omega))}+1)(|\Omega|+1)^{\frac{1}{2}})^{\frac{q+2}{q+1}}|u|^{q+2}_{q+2}+\frac{1}{q+2}}\\
%\leq&\displaystyle{\frac{q+1}{q+2}(|f|_{L^\infty(0,\omega; L^\infty(\Omega))}+1)^2(|\Omega|+1)|u|^{q+2}_{q+2}+\frac{1}{q+2}.}\\
\end{array}
\label{cz2.5kk1214114114rrggkkll}
\end{equation}
Since $l=q_0$, therefore, $$\frac{({l}-1)}{{l}}C_{{l}+1}^{\frac{1}{{l}+1}}C_{\beta}+\frac{({l}-1)}{{l}}C_{{l}+1}^{\frac{1}{{l}+1}}\chi- \mu=\frac{{q_0}-1}{{q_0}}(C_{\beta}+\chi) C_{{q_0}+1}^{\frac{1}{{q_0}+1}}- \mu,$$
so that,
 \begin{equation}0<\varepsilon_1+\varepsilon_3=\frac{1}{2}(\mu -\frac{{q_0}-1}{{q_0}}(C_{\beta}+\chi) C_{{q_0}+1}^{\frac{1}{{q_0}+1}})<\mu -\frac{{q_0}-1}{{q_0}}(C_{\beta}+\chi) C_{{q_0}+1}^{\frac{1}{{q_0}+1}}.
\label{cz2.5kk1214114114ssdddrrttrrgssdeersdddtttgkkll}
\end{equation}
%Observe, $q_0<\frac{N+2}{(N-2)_+}$, in light of \dref{cz2.5ghju48cfg924ghyuji}, we may derive that there exists a positive constant $C_4$ such that
% \begin{equation}
%\begin{array}{rl}
%&\displaystyle{\int_\Omega v^{{{q_0}+1}}(x,t)\leq C_4~~~\mbox{for all}~~t\in(0,T_{max}).}\\
%%\leq&\displaystyle{\int_\Omega (|f(x,t)|+L)|u|^{q+1} dx}\\
%%\leq&\displaystyle{\int_\Omega (k_1|u|^\alpha+k_2)|u|^{q+1} dx}\\
%%=&\displaystyle{\int\int\triangle J(x-y)u(y)(u|u|^q(x))dydx.}
%%\leq&\displaystyle{(|g|_{L^\infty(0,\omega; L^\infty(\Omega))}+1)(|\Omega|+1)^{\frac{1}{2}}|u|^{q+1}_{q+2}.}\\
%%\leq&\displaystyle{\frac{q+1}{q+2}((|f|_{L^\infty(0,\omega; L^\infty(\Omega))}+1)(|\Omega|+1)^{\frac{1}{2}})^{\frac{q+2}{q+1}}|u|^{q+2}_{q+2}+\frac{1}{q+2}}\\
%%\leq&\displaystyle{\frac{q+1}{q+2}(|f|_{L^\infty(0,\omega; L^\infty(\Omega))}+1)^2(|\Omega|+1)|u|^{q+2}_{q+2}+\frac{1}{q+2}.}\\
%\end{array}
%\label{cz2.5kk1214114114rrgssdeersdddtttgkkll}
%\end{equation}
Collecting \dref{cz2.5kk1214114114ssdddrrttrrgssdeersdddtttgkkll} and  \dref{cz2.5kk1214114114rrggkkll}, we derive that there exists a positive constant $C_5$
such that
%Let $\sigma_{r,{q_0},\lambda_0,\chi, M_\psi}=\lambda_0+{A}_1\lambda_0^{- {q_0} }\chi^{ {q_0}+1}C_{ {q_0}+1}.$
%Then by Lemma \ref{lemma45630223116} and ${q_0}\geq1$, we have
%$$\min_{\lambda_0>0}\sigma_{r,{q_0},\lambda_0,\chi, M_\psi}=\frac{\chi M_\psi C_{ {q_0}+1}^{\frac{1}{ {q_0}+1}}}{ {q_0}}.$$
%Choosing $b$ big enough such that  $b>\frac{2^r {q_0} }{ \chi M_\psi C_{ {q_0}+1}^{\frac{1}{ {q_0}+1}}}$,
%then there exist $\varepsilon_1$ and $\varepsilon_2>$ satisfying $$\varepsilon_1=\varepsilon_2\in (0, \frac{ b-\sigma_{r,{q_0},\lambda_0,\chi, M_\psi}}{2}),$$ such that
%$$\varepsilon_1+\varepsilon_2+\varepsilon_3+\lambda_0+{A}_1\lambda_0^{- {q_0} }\chi^{ {q_0}+1}C_{ {q_0}+1}- \mu\leq0.$$
%Therefore, \dref{cz2.5kk1214114114rrggkkll} implies that
\begin{equation}
\begin{array}{rl}
&\displaystyle{\int_{\Omega}u^{{q_0}}(x,t) dx\leq C_5~~\mbox{for all}~~t\in (s_0, T_{max}).}\\
%\leq&\displaystyle{\int_\Omega (|f(x,t)|+L)|u|^{q+1} dx}\\
%\leq&\displaystyle{\int_\Omega (k_1|u|^\alpha+k_2)|u|^{q+1} dx}\\
%=&\displaystyle{\int\int\triangle J(x-y)u(y)(u|u|^q(x))dydx.}
%\leq&\displaystyle{(|g|_{L^\infty(0,\omega; L^\infty(\Omega))}+1)(|\Omega|+1)^{\frac{1}{2}}|u|^{q+1}_{q+2}.}\\
%\leq&\displaystyle{\frac{q+1}{q+2}((|f|_{L^\infty(0,\omega; L^\infty(\Omega))}+1)(|\Omega|+1)^{\frac{1}{2}})^{\frac{q+2}{q+1}}|u|^{q+2}_{q+2}+\frac{1}{q+2}}\\
%\leq&\displaystyle{\frac{q+1}{q+2}(|f|_{L^\infty(0,\omega; L^\infty(\Omega))}+1)^2(|\Omega|+1)|u|^{q+2}_{q+2}+\frac{1}{q+2}.}\\
\end{array}
\label{cz2.5kk1214114114rrggkklljjuu}
\end{equation}
%and
%for any $T>0,$ there exists a positive constant $C_4$ such that
%\begin{equation}
%\begin{array}{rl}
%&\displaystyle{\int_{s_0}^T
%\int_\Omega u^{{{q_0}+1}} dxds\leq C_4(T+1)~~\mbox{for all}~~t\in (s_0, T).}\\
%%\leq&\displaystyle{\int_\Omega (|f(x,t)|+L)|u|^{q+1} dx}\\
%%\leq&\displaystyle{\int_\Omega (k_1|u|^\alpha+k_2)|u|^{q+1} dx}\\
%%=&\displaystyle{\int\int\triangle J(x-y)u(y)(u|u|^q(x))dydx.}
%%\leq&\displaystyle{(|g|_{L^\infty(0,\omega; L^\infty(\Omega))}+1)(|\Omega|+1)^{\frac{1}{2}}|u|^{q+1}_{q+2}.}\\
%%\leq&\displaystyle{\frac{q+1}{q+2}((|f|_{L^\infty(0,\omega; L^\infty(\Omega))}+1)(|\Omega|+1)^{\frac{1}{2}})^{\frac{q+2}{q+1}}|u|^{q+2}_{q+2}+\frac{1}{q+2}}\\
%%\leq&\displaystyle{\frac{q+1}{q+2}(|f|_{L^\infty(0,\omega; L^\infty(\Omega))}+1)^2(|\Omega|+1)|u|^{q+2}_{q+2}+\frac{1}{q+2}.}\\
%\end{array}
%\label{cz2.5kk1ss21411eertyy4114rrggkkll}
%\end{equation}
Next,
we fix $q <\frac{N{q_0}}{(N-{q_0})^+}$
and choose some
 $\alpha> \frac{1}{2}$ such that
\begin{equation}
q <\frac{1}{\frac{1}{q_0}-\frac{1}{N}+\frac{2}{N}(\alpha-\frac{1}{2})}\leq\frac{N{q_0}}{(N-{q_0})^+}.
\label{fghgbhnjcz2.5ghju48cfg924ghyuji}
\end{equation}
%which implies that
%\begin{equation}[-\alpha-\frac{N}{2}(\frac{1}{q_0}-\frac{1}{q})]\frac{q_0}{r-1}>-1.
%\label{fghgbhnjcz2.5ghju48cfg924ghyuji}
%\end{equation}

Now, involving the variation-of-constants formula
for $v$, we have
\begin{equation}
v(t)=e^{-t(A+1)}v(s_0) +\int_{s_0}^{t}e^{-(t-s)(A+1)}u(s) ds,~~ t\in(s_0, T_{max}).
\label{fghbnmcz2.5ghju48cfg924ghyuji}
\end{equation}
Hence, it follows from \dref{eqx45xx12112} and  %\dref{gbhnhnjmkfgbhnn6291}, \dref{ghnjbhnjgeqx45xx1211},
 \dref{fghbnmcz2.5ghju48cfg924ghyuji} that
\begin{equation}
\begin{array}{rl}
&\displaystyle{\|(A+1)^\alpha v(t)\|_{L^q(\Omega)}}\\
\leq&\displaystyle{C_6\int_{s_0}^{t}(t-s)^{-\alpha-\frac{N}{2}(\frac{1}{q_0}-\frac{1}{q})}e^{-\mu(t-s)}\|u(s)\|_{L^{q_0}(\Omega)}ds+
C_6s_0^{-\alpha-\frac{N}{2}(1-\frac{1}{q})}\|v(s_0,t)\|_{L^1(\Omega)}}\\
%\leq&\displaystyle{C(q)\left(\int_{s_0}^{t}(t-s)^{[-\alpha-\frac{N}{2}(\frac{1}{q_0}-\frac{1}{q})]\frac{q_0}{r-1}}e^{-\frac{r\mu}{r-1}(t-s)}ds\right)^{\frac{r-1}{q_0}}
%\left(\int_{s_0}^{t}\|u(s)\|_{L^r(\Omega)}^rds\right)^{\frac{1}{q_0}}}\\
%&+\displaystyle{c\tau^{-\alpha-\frac{N}{2}(1-\frac{1}{q})}( C+(\int_{\Omega}{v}_0(x)- C)e^{-t})}\\
\leq&\displaystyle{C_6\int_{0}^{+\infty}\sigma^{-\alpha-\frac{N}{2}(\frac{1}{q_0}-\frac{1}{q})}e^{-\mu\sigma}d\sigma
+C_6s_0^{-\alpha-\frac{N}{2}(1-\frac{1}{q})}\beta.}\\
\end{array}
\label{gnhmkfghbnmcz2.5ghju48cfg924ghyuji}
\end{equation}
Hence, due to \dref{fghgbhnjcz2.5ghju48cfg924ghyuji}  and \dref{gnhmkfghbnmcz2.5ghju48cfg924ghyuji}, we have
\begin{equation}
\int_{\Omega}|\nabla {v}(t)|^{q}\leq C_7~~\mbox{for all}~~ t\in(s_0, T_{max})
\label{ffgbbcz2.5ghju48cfg924ghyuji}
\end{equation}
and $q\in[1,\frac{N{q_0}}{(N-{q_0})^+})$.
Finally, in view of \dref{eqx45xx12112} and \dref{ffgbbcz2.5ghju48cfg924ghyuji},
%\dref{ghffgbbcz2.5ghju48cfg924ghyuji},
 we can get \begin{equation}
\int_{\Omega}|\nabla {v}(t)|^{q}\leq C_8~~\mbox{for all}~~ t\in(0, T_{max})~~\mbox{and}~~q\in[1,\frac{N{q_0}}{(N-{q_0})^+})
\label{ffgbbcz2.5ghjusseeeddd48cfg924ghyuji}
\end{equation}
with some positive constant $C_8.$
%for all $q\in[1,\frac{N{q_0}}{(N-{q_0})^+})$.

Multiplying both sides of the first equation in \dref{1.1} by $u^{p-1}$, integrating over $\Omega$ and integrating by parts, we arrive at
\begin{equation}
\begin{array}{rl}
&\displaystyle{\frac{1}{{p}}\frac{d}{dt}\|u\|^{{p}}_{L^{{p}}(\Omega)}+({{p}-1})\int_{\Omega}u^{{{p}-2}}|\nabla u|^2dx}
\\
=&\displaystyle{-\chi\int_\Omega \nabla\cdot( u\nabla v)
  u^{{p}-1} dx-\xi\int_\Omega \nabla\cdot( u\nabla w)
  u^{{p}-1} dx+
\int_\Omega   u^{{p}-1}(au-\mu u^2) dx}\\
=&\displaystyle{\chi({p}-1)\int_\Omega  u^{{p}-1}\nabla u\cdot\nabla v
   dx+\xi({p}-1)\int_\Omega  u^{{p}-1}\nabla u\cdot\nabla w
   dx+
\int_\Omega   u^{{p}-1}(au-\mu u^2) dx,}\\
%\leq&\displaystyle{\int_\Omega (|f(x,t)|+L)|u|^{q+1} dx}\\
%\leq&\displaystyle{\int_\Omega (k_1|u|^\alpha+k_2)|u|^{q+1} dx}\\
%=&\displaystyle{\int\int\triangle J(x-y)u(y)(u|u|^q(x))dydx.}
%\leq&\displaystyle{(|g|_{L^\infty(0,\omega; L^\infty(\Omega))}+1)(|\Omega|+1)^{\frac{1}{2}}|u|^{q+1}_{q+2}.}\\
%\leq&\displaystyle{\frac{q+1}{q+2}((|f|_{L^\infty(0,\omega; L^\infty(\Omega))}+1)(|\Omega|+1)^{\frac{1}{2}})^{\frac{q+2}{q+1}}|u|^{q+2}_{q+2}+\frac{1}{q+2}}\\
%\leq&\displaystyle{\frac{q+1}{q+2}(|f|_{L^\infty(0,\omega; L^\infty(\Omega))}+1)^2(|\Omega|+1)|u|^{q+2}_{q+2}+\frac{1}{q+2}.}\\
\end{array}
\label{cz2aasweee.5114114}
\end{equation}
which together with the Young inequality and  \dref{vbgncz2.5xx1ffgghh512} implies that
\begin{equation}
\begin{array}{rl}
&\displaystyle{\frac{1}{{p}}\frac{d}{dt}\|u\|^{{p}}_{L^{{p}}(\Omega)}+({{p}-1})\int_{\Omega}u^{{{p}-2}}|\nabla u|^2dx}
\\
\leq&\displaystyle{\frac{{{p}-1}}{2}\int_{\Omega}u^{{{p}-2}}|\nabla u|^2dx+
\frac{\chi^2({p}-1)}{2}\int_\Omega  u^{{p}}|\nabla v|^2
   dx+C_9\int_\Omega  v^{p+1}-\frac{\mu }{2}\int_\Omega u^{p+1}dx+ C_{10}}\\
%\leq&\displaystyle{\int_\Omega (|f(x,t)|+L)|u|^{q+1} dx}\\
%\leq&\displaystyle{\int_\Omega (k_1|u|^\alpha+k_2)|u|^{q+1} dx}\\
%=&\displaystyle{\int\int\triangle J(x-y)u(y)(u|u|^q(x))dydx.}
%\leq&\displaystyle{(|g|_{L^\infty(0,\omega; L^\infty(\Omega))}+1)(|\Omega|+1)^{\frac{1}{2}}|u|^{q+1}_{q+2}.}\\
%\leq&\displaystyle{\frac{q+1}{q+2}((|f|_{L^\infty(0,\omega; L^\infty(\Omega))}+1)(|\Omega|+1)^{\frac{1}{2}})^{\frac{q+2}{q+1}}|u|^{q+2}_{q+2}+\frac{1}{q+2}}\\
%\leq&\displaystyle{\frac{q+1}{q+2}(|f|_{L^\infty(0,\omega; L^\infty(\Omega))}+1)^2(|\Omega|+1)|u|^{q+2}_{q+2}+\frac{1}{q+2}.}\\
\end{array}
\label{cz2aasweee.5ssedfff114114}
\end{equation}
for some positive constants $C_9$ and $C_{10}.$
Now, in light of $q_0>\frac{N}{2}$, due to \dref{ffgbbcz2.5ghjusseeeddd48cfg924ghyuji} and the Sobolev imbedding theorem, we derive that
there exists a positive constant $C_{11}$ such that
\begin{equation}
\label{cz2aasweee.5ssedfff114114}
C_9\int_\Omega  v^{p+1}(x,t)\leq C_{11}~~\mbox{for all}~~ t\in(0, T_{max})~~\mbox{and}~~p>1.
\end{equation}
%Integrating by parts to  \dref{cz2.5} and using \dref{9162} and \dref{9162ccxxzz12}, we have
Since, $q_0>\frac{N}{2}$ yields $q_0<\frac{N{q_0}}{2(N-{q_0})^+}$, in light of the H\"{o}lder inequality, \dref{eqx45xx12112} and \dref{ffgbbcz2.5ghjusseeeddd48cfg924ghyuji}, we arrive   at
\begin{equation}
\begin{array}{rl}
 \displaystyle\frac{\chi^2({p}-1)}{2}\displaystyle\int_\Omega{{u^{p }}} |\nabla {v}|^2\leq&\displaystyle{ \displaystyle\frac{\chi^2({p}-1)}{2}\left(\displaystyle\int_\Omega{{u^{\frac{q_0}{q_0-1} p }}}\right)^{\frac{q_0-1}{q_0}}\left(\displaystyle\int_\Omega |\nabla {v}|^{2q_0}\right)^{\frac{1}{q_0}}}\\
\leq&\displaystyle{C_{12}\|  {{u^{\frac{p}{2}}}}\|^{2}_{L^{2\frac{q_0}{q_0-1} }(\Omega)},}\\
\end{array}
\label{cz2.57151hhkkhhhjukildrfthjjhhhhh}
\end{equation}
where $C_{10}$ is a positive constant.
%where $\kappa$ and $\kappa'$ satisfy $\frac{1}{\kappa'} + \frac{1}{\kappa}= 1$.
%Clearly,  $3 > 1$ due to $\kappa > 1$.
Since %$m\geq 1 $,
$q_0> \frac{N}{2}$ and $p>q_0-1$,
%$\kappa>\frac{3}{2}$,
we have
%$$\frac{2\kappa(2-2 m )+3m-3}{2\kappa-3}\leq1,$$
%%$p>\max\{1-m,\frac{2\kappa(2-2 m )+3m-3}{2\kappa-3}\}$ and $\kappa>\frac{3}{2}$,
%%we have
%therefore
$$\frac{q_0}{p}\leq\frac{q_0}{q_0-1}\leq\frac{N}{N-2},$$
which together with the Gagliardo--Nirenberg inequality (see e.g.  \cite{Zhejjjjnssdssdddfffgghjjkk1})
%Lemma \ref{lemma41}
 implies that
\begin{equation}
\begin{array}{rl}
C_{12}\|  {{u^{\frac{p}{2}}}}\|
^{2}_{L^{2\frac{q_0}{q_0-1} }(\Omega)}\leq&\displaystyle{C_{13}(\|\nabla   {{u^{\frac{p}{2}}}}\|_{L^2(\Omega)}^{\mu_1}\|  {{u^{\frac{p}{2}}}}\|_{L^\frac{2q_0}{p}(\Omega)}^{1-\mu_1}+\|  {{u^{\frac{p}{2}}}}\|_{L^\frac{2q_0}{p}(\Omega)})^{2}}\\
\leq&\displaystyle{C_{14}(\|\nabla   {{u^{\frac{p}{2}}}}\|_{L^2(\Omega)}^{2\mu_1}+1)}\\
=&\displaystyle{C_{14}(\|\nabla   {{u^{\frac{p}{2}}}}\|_{L^2(\Omega)}^{\frac{2N(p-q_0+1)}{Np+2q_0-Nq_0}}+1)}\\
%\leq&\displaystyle{\displaystyle\int_\Omega (|f(x,t)|+L)|u|^{q+1}  }\\
%\leq&\displaystyle{\displaystyle\int_\Omega (k_1|u|^\alpha+k_2)|u|^{q+1}  }\\
%=&\displaystyle{\displaystyle\int\displaystyle\int\triangle J(x-y)u(y)(u|u|^q(x))dy .}
%\leq&\displaystyle{(|g|_{L^\infty(0,\omega; L^\infty(\Omega))}+1)(|\Omega|+1)^{\frac{1}{2}}|u|^{q+1}_{q+2}.}\\
%\leq&\displaystyle{\frac{q+1}{q+2}((|f|_{L^\infty(0,\omega; L^\infty(\Omega))}+1)(|\Omega|+1)^{\frac{1}{2}})^{\frac{q+2}{q+1}}|u|^{q+2}_{q+2}+\frac{1}{q+2}}\\
%\leq&\displaystyle{\frac{q+1}{q+2}(|f|_{L^\infty(0,\omega; L^\infty(\Omega))}+1)^2(|\Omega|+1)|u|^{q+2}_{q+2}+\frac{1}{q+2}.}\\
\end{array}
\label{cz2.563022222ikopl2sdfg44}
\end{equation}
with some positive constants $C_{13}, C_{14}$ and
$$\mu_1=\frac{\frac{N{p}}{2q_0}-\frac{Np}{2\frac{q_0}{q_0-1} p }}{1-\frac{N}{2}+\frac{N{p}}{2q_0}}=
{p}\frac{\frac{N}{2q_0}-\frac{N}{2\frac{q_0}{q_0-1} p }}{1-\frac{N}{2}+\frac{N{p}}{2q_0}}\in(0,1).$$
Now, in view of the Young inequality, we derive that
\begin{equation}
\begin{array}{rl}
\displaystyle\frac{\chi^2({p}-1)}{2}\displaystyle\int_\Omega  u^{{p}}|\nabla v|^2dx &\leq\displaystyle{\frac{{{p}-1}}{4}\int_{\Omega}u^{{{p}-2}}|\nabla u|^2dx+C_{15}.}\\
%\leq&\displaystyle{\int_\Omega (|f(x,t)|+L)|u|^{q+1} dx}\\
%\leq&\displaystyle{\int_\Omega (k_1|u|^\alpha+k_2)|u|^{q+1} dx}\\
%=&\displaystyle{\int\int\triangle J(x-y)u(y)(u|u|^q(x))dydx.}
%\leq&\displaystyle{(|g|_{L^\infty(0,\omega; L^\infty(\Omega))}+1)(|\Omega|+1)^{\frac{1}{2}}|u|^{q+1}_{q+2}.}\\
%\leq&\displaystyle{\frac{q+1}{q+2}((|f|_{L^\infty(0,\omega; L^\infty(\Omega))}+1)(|\Omega|+1)^{\frac{1}{2}})^{\frac{q+2}{q+1}}|u|^{q+2}_{q+2}+\frac{1}{q+2}}\\
%\leq&\displaystyle{\frac{q+1}{q+2}(|f|_{L^\infty(0,\omega; L^\infty(\Omega))}+1)^2(|\Omega|+1)|u|^{q+2}_{q+2}+\frac{1}{q+2}.}\\
\end{array}
\label{cz2aasweeeddfff.5ssedfssddff114114}
\end{equation}
Inserting \dref{cz2aasweeeddfff.5ssedfssddff114114} into \dref{cz2aasweee.5ssedfff114114}, we conclude that
\begin{equation}
\begin{array}{rl}
&\displaystyle{\frac{1}{{p}}\frac{d}{dt}\|u\|^{{p}}_{L^{{p}}(\Omega)}+\frac{{{p}-1}}{4}\int_{\Omega}u^{{{p}-2}}|\nabla u|^2dx+
\frac{\mu}{2}\int_\Omega u^{p+1}dx\leq C_{16}.}\\
%\leq&\displaystyle{\int_\Omega (|f(x,t)|+L)|u|^{q+1} dx}\\
%\leq&\displaystyle{\int_\Omega (k_1|u|^\alpha+k_2)|u|^{q+1} dx}\\
%=&\displaystyle{\int\int\triangle J(x-y)u(y)(u|u|^q(x))dydx.}
%\leq&\displaystyle{(|g|_{L^\infty(0,\omega; L^\infty(\Omega))}+1)(|\Omega|+1)^{\frac{1}{2}}|u|^{q+1}_{q+2}.}\\
%\leq&\displaystyle{\frac{q+1}{q+2}((|f|_{L^\infty(0,\omega; L^\infty(\Omega))}+1)(|\Omega|+1)^{\frac{1}{2}})^{\frac{q+2}{q+1}}|u|^{q+2}_{q+2}+\frac{1}{q+2}}\\
%\leq&\displaystyle{\frac{q+1}{q+2}(|f|_{L^\infty(0,\omega; L^\infty(\Omega))}+1)^2(|\Omega|+1)|u|^{q+2}_{q+2}+\frac{1}{q+2}.}\\
\end{array}
\label{cz2aasweee.5ssedfff114114}
\end{equation}
Therefore, integrating the above inequality  with respect to $t$ yields
%, we derive that %there exists a positive constant $C_{22}$ such that
\begin{equation}
\begin{array}{rl}
\|u(\cdot, t)\|_{L^{{p}}(\Omega)}\leq C_{17} ~~ \mbox{for all}~~p\geq1~~\mbox{and}~~  t\in(0,T_{max}) \\
\end{array}
\label{cz2.5g556789hhjui78jj90099}
\end{equation}
for some positive constant $C_{17}$.
The proof Lemma \ref{lemma45630223} is complete.
\end{proof}

\begin{lemma}\label{99lemma45630223}
Let $(u,v,w)$ be a solution to \dref{1.1} on $(0,T_{max})$. Assume that  $r>2$.
%Assume that    $\mu>\frac{(N-2)_+}{N}\chi C_{\frac{N}{2}+1}^{\frac{1}{\frac{N}{2}+1}}$.
% and $T>0.$
%$\tau=\min\{1,\frac{1}{2}T_{max}\}$.
 Then %for any $T\in(s,T_{max})$,
 for all $p>1$,
there exists a positive constant $C:=C(p,|\Omega|,r,\mu,\xi,\chi,\beta)$ such that %for any $T\in(0,T_{max})$ and
% the solution of
%\dref{1.1} satisfies
%Then for all $k\in(1,([r]+1)N)$ one can
%find a positive constant $C$ such that
\begin{equation}
\int_{\Omega}u^p(x,t)dx\leq C ~~~\mbox{for all}~~ t\in(0,T_{max}).
\label{99zjscz2.5297x96302222114}
\end{equation}
%Moreover, assume $T>0$, then there exists a positive constant such that
%\begin{equation}
%\int_t^{t+\tau}\int_{\Omega}u^{\delta+1}(x,t)dxds\leq C(T)~~~\mbox{for all}~~ t\in(0,T).
%\label{zjscz2.5297ssdeeex96302222114}
%\end{equation}
%holds.
\end{lemma}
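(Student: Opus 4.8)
The plan is to mimic the structure of the proof of Lemma~\ref{lemma45630223}, but to exploit the stronger superlinear damping $-\mu u^{r-1}$ with $r>2$ in place of the delicate threshold argument that was needed at $r=2$. First I would test the first equation of \dref{1.1} against $u^{l-1}$ for a fixed exponent $l=q_0>\frac{N}{2}$, obtaining, exactly as in \dref{cz2.5114114}, the identity
\begin{equation*}
\frac{1}{l}\frac{d}{dt}\|u\|_{L^l(\Omega)}^l+(l-1)\int_\Omega u^{l-2}|\nabla u|^2=-\chi\int_\Omega u^{l-1}\nabla\cdot(u\nabla v)-\xi\int_\Omega u^{l-1}\nabla\cdot(u\nabla w)+\int_\Omega u^{l-1}(au-\mu u^r),
\end{equation*}
and I would add a coercive term $\frac{l+1}{l}\int_\Omega u^l$ to both sides as in \dref{cz2.5kk1214114114}. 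The haptotaxis term is handled verbatim by Lemma~\ref{zsxxcdvlemma45630}, producing the gradient-absorbing piece $\frac{l-1}{2}\int_\Omega u^{l-2}|\nabla u|^2$ plus terms of the form $C_\beta\int_\Omega u^l(v+1)$; the chemotaxis term is integrated by parts to give $\frac{l-1}{l}\chi\int_\Omega u^l|\Delta v|$ as in \dref{cz2.563019114}.

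The key point is that now the absorbing monomial on the right is $-\mu\int_\Omega u^{l-1}\cdot u^r=-\mu\int_\Omega u^{l+r-1}$ with $r>2$, so the exponent $l+r-1$ \emph{strictly exceeds} $l+1$. Consequently, when I estimate $\chi\frac{l-1}{l}\int_\Omega u^l|\Delta v|$ and $C_\beta\frac{l-1}{l}\int_\Omega u^l v$ by Young's inequality, I can split off an arbitrarily small coefficient times $\int_\Omega u^{l+r-1}$ (not merely $\int_\Omega u^{l+1}$), leaving residual powers $\int_\Omega|\Delta v|^{\frac{l+r-1}{r-1}}$ and $\int_\Omega v^{\frac{l+r-1}{r-1}}$ with finite exponents. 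Since $r>2$ forces no smallness condition on $\mu$ at all, I simply choose the Young parameters so that the total coefficient of $\int_\Omega u^{l+r-1}$ is $<\mu$; there is no need for the minimization Lemma~\ref{lemma45630223116} here. After this, the variation-of-constants formula in $t$ together with the maximal-Sobolev-regularity estimate \dref{cz2.5kke34567789999001214114114rrggjjkk}-type bound (Lemma~\ref{lemma45xy1222232} and Lemma~\ref{lemma45630223}'s device, applied with exponent $\gamma=\frac{l+r-1}{r-1}$ instead of $l+1$) yields $\int_\Omega u^{q_0}\le C$ on $(s_0,T_{max})$, hence on $(0,T_{max})$ by \dref{eqx45xx12112}.

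From the $L^{q_0}$ bound with $q_0>\frac N2$ the rest is identical to the corresponding portion of Lemma~\ref{lemma45630223}: the variation-of-constants formula \dref{fghbnmcz2.5ghju48cfg924ghyuji} for $v$ together with the smoothing estimate \dref{gbhnhnjmkfgbhnn6291} and the embedding $D((A+1)^\alpha)\hookrightarrow W^{1,q}(\Omega)$ give $\int_\Omega|\nabla v|^q\le C$ for every $q<\frac{Nq_0}{(N-q_0)^+}$, and in particular for some $q>2$. Then testing the first equation against $u^{p-1}$ for arbitrary $p>1$, using Lemma~\ref{zsxxcdvlemma45630} for the haptotaxis term, Young's inequality on the chemotaxis term to produce $\frac{\chi^2(p-1)}{2}\int_\Omega u^p|\nabla v|^2$, then H\"older with the $L^{2q_0}$-bound on $|\nabla v|$, and finally the Gagliardo--Nirenberg inequality exactly as in \dref{cz2.563022222ikopl2sdfg44} (the exponent condition $\frac{q_0}{p}\le\frac{q_0}{q_0-1}\le\frac{N}{N-2}$ still holds), one absorbs the gradient term and arrives at $\frac{1}{p}\frac{d}{dt}\|u\|_{L^p}^p+\frac{\mu}{2}\int_\Omega u^{p+1}\le C$, whence $\|u(\cdot,t)\|_{L^p(\Omega)}\le C$ for all $t\in(0,T_{max})$. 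I expect the only genuine obstacle to be bookkeeping: verifying that the new finite exponent $\frac{l+r-1}{r-1}$ is admissible in the maximal Sobolev regularity estimate (Lemma~\ref{lemma45xy1222232}, Lemma~\ref{lemma45630223116}) and that the $v$-regularity exponent $q$ coming out of $q_0>\frac N2$ is large enough ($q>2$) to close the $L^p$ loop; both are routine once $q_0>\frac N2$ is fixed. $\blacksquare$
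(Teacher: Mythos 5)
Your proposal is correct, and its first stage coincides with the paper's: test with $u^{l-1}$, use the haptotaxis estimate of Lemma \ref{zsxxcdvlemma45630}, and exploit that the damping exponent $l+r-1$ strictly exceeds $l+1$ so that every term produced by Young's inequality is absorbed without any smallness condition on $\mu$ and without the minimization Lemma \ref{lemma45630223116}. The two arguments part ways afterwards. Where you propose to invoke the maximal Sobolev regularity of Lemma \ref{lemma45xy1222232} at the new exponent $\gamma=\frac{l+r-1}{r-1}$, the paper instead notes that $\frac{l+r-1}{r-1}<l+1$ precisely because $r>2$, applies Young once more to replace $\int_\Omega|\Delta v|^{\frac{l+r-1}{r-1}}$ by $\int_\Omega|\Delta v|^{l+1}+C$, and then runs the maximal regularity machinery at the same exponent $l+1$ as in the $r=2$ case; this sidesteps the bookkeeping you flag about matching the exponential weight $e^{\gamma s}$ in \dref{cz2.5bbv114} to the variation-of-constants factor $e^{-(l+1)(t-s)}$ (with your choice of $\gamma$ you would need to adjust the added coercive term from $\frac{l+1}{l}\int_\Omega u^l$ to $\frac{\gamma}{l}\int_\Omega u^l$, or simply do what the paper does). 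The second, more substantive difference: because for $r>2$ the first-stage argument imposes no constraint tying $l$ to $\mu$, the paper lets $l$ be \emph{arbitrary} and reads off $\|u(\cdot,t)\|_{L^{l}(\Omega)}\le C$ for all $l\ge 1$ directly from \dref{112233cz2.5kk1214114114rrggkkll}, with no second bootstrap at all. You instead fix $l=q_0>\frac N2$ and then rerun the entire second stage of Lemma \ref{lemma45630223} (the semigroup estimate for $\nabla v$, H\"older, Gagliardo--Nirenberg) to reach general $p$. That route closes --- the exponent checks you list ($2q_0<\frac{Nq_0}{(N-q_0)^+}$ when $q_0>\frac N2$, and $p>q_0-1$ for large $p$ with smaller $p$ by H\"older --- are all fine --- but it is strictly more work than necessary; the whole point of $r>2$ is that the one-step argument already yields every $L^p$ norm.
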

\begin{proof}
Firstly, multiplying the first equation of \dref{1.1}
  by $u^{{l}-1}$ and integrating over $\Omega$, % using
%  ($H_1$) and  \dref{4.1xx},
 we get
%Multiplying both sides of the first equation in \dref{1.1} by $u^{k-1}$ and integrating over $\Omega$, we arrive at
\begin{equation}
\begin{array}{rl}
&\displaystyle{\frac{1}{{l}}\frac{d}{dt}\|u\|^{{l}}_{L^{{l}}(\Omega)}+({{l}-1})\int_{\Omega}u^{{{l}-2}}|\nabla u|^2dx}
\\
=&\displaystyle{-\chi\int_\Omega \nabla\cdot( u\nabla v)
  u^{{l}-1} dx-\xi\int_\Omega \nabla\cdot( u\nabla w)
  u^{{l}-1} dx+
\int_\Omega   u^{{l}-1}(au-\mu u^r) dx.}\\
%\leq&\displaystyle{\int_\Omega (|f(x,t)|+L)|u|^{q+1} dx}\\
%\leq&\displaystyle{\int_\Omega (k_1|u|^\alpha+k_2)|u|^{q+1} dx}\\
%=&\displaystyle{\int\int\triangle J(x-y)u(y)(u|u|^q(x))dydx.}
%\leq&\displaystyle{(|g|_{L^\infty(0,\omega; L^\infty(\Omega))}+1)(|\Omega|+1)^{\frac{1}{2}}|u|^{q+1}_{q+2}.}\\
%\leq&\displaystyle{\frac{q+1}{q+2}((|f|_{L^\infty(0,\omega; L^\infty(\Omega))}+1)(|\Omega|+1)^{\frac{1}{2}})^{\frac{q+2}{q+1}}|u|^{q+2}_{q+2}+\frac{1}{q+2}}\\
%\leq&\displaystyle{\frac{q+1}{q+2}(|f|_{L^\infty(0,\omega; L^\infty(\Omega))}+1)^2(|\Omega|+1)|u|^{q+2}_{q+2}+\frac{1}{q+2}.}\\
\end{array}
\label{9999999cz2.5114114}
\end{equation}
In light of the  Young inequality and $r>2$, it reads that there exists a positive constant $C_1$ such that
\begin{equation}
\begin{array}{rl}
&\displaystyle{\int_\Omega  \left(\frac{{l}+1}{{l}} u^{l}+ u^{{l}-1}(au-\mu u^r)\right) dx}\\
\leq &\displaystyle{\frac{{l}+1}{{l}}\int_\Omega u^{l} dx+a\int_\Omega    u^{{l}}dx- \mu\int_\Omega  u^{{{l+r-1}}}dx}\\
\leq &\displaystyle{- \frac{7\mu}{8}\int_\Omega u^{{{l+r-1}}} dx+C_1.}
%\leq&\displaystyle{a\int_\Omega   u^{{{l}-1}}dx-b\int_\Omega u^{q+r+{{l}-1}}_\varepsilon dx+b|\Omega|.}\\
%=&\displaystyle{-(k-1)\chi M_{\psi}\int_\Omega u^{k}(v-u^q) dx}\\
%%\leq&\displaystyle{\int_\Omega (|f(x,t)|+L)|u|^{q+1} dx}\\
%%\leq&\displaystyle{\int_\Omega (k_1|u|^\alpha+k_2)|u|^{q+1} dx}\\
%\leq&\displaystyle{-(k-1)\chi M_{\psi}\int_\Omega u^{q+k}dx.}\\
%%\leq&\displaystyle{(|g|_{L^\infty(0,\omega; L^\infty(\Omega))}+1)(|\Omega|+1)^{\frac{1}{2}}|u|^{q+1}_{q+2}.}\\
%%\leq&\displaystyle{\frac{q+1}{q+2}((|f|_{L^\infty(0,\omega; L^\infty(\Omega))}+1)(|\Omega|+1)^{\frac{1}{2}})^{\frac{q+2}{q+1}}|u|^{q+2}_{q+2}+\frac{1}{q+2}}\\
%%\leq&\displaystyle{\frac{q+1}{q+2}(|f|_{L^\infty(0,\omega; L^\infty(\Omega))}+1)^2(|\Omega|+1)|u|^{q+2}_{q+2}+\frac{1}{q+2}.}\\
\end{array}
\label{999999cz2.563011228ddff}
\end{equation}
%where $$C_3=\frac{1}{{l}+1}\left(\varepsilon_1\frac{{l}+1}{{l}}\right)^{-{r} }
%\left(\frac{{l}+1}{{l}}+a\right)^{{r}+1 }|\Omega|.$$
%and
%$$C_2(\varepsilon_2,{r})=\frac{r}{{l}+1}\left(\varepsilon_2\frac{{l}+1}{{l}-1}\right)^{-\frac{{l}-1}{r}}
%\left(a+b\right)^{\frac{{l}+1}{r}}|\Omega|.$$
%
%

Next,
integrating by parts to the first term on the right hand side of \dref{9999999cz2.5114114} and  using  the Young inequality,
we obtain %from the second equation in \dref{1.1}
\begin{equation}
\begin{array}{rl}
&\displaystyle{-\chi\int_\Omega \nabla\cdot( u\nabla v)
  u^{{l}-1} dx}
\\
%=&\displaystyle{({{l}-1})\chi\int_\Omega  u^{{{l}-1}}\nabla u\cdot\nabla v dx}
%\\
%=&\displaystyle{({{l}-1})\chi\int_\Omega  \nabla \Psi(u)\cdot\nabla v dx}
%\\
%=&\displaystyle{-({{l}-1})\chi\int_\Omega \tilde{\Psi}(u){r} v dx}
%\\
%%=&\displaystyle{-({{l}-1})\chi\int_\Omega \Psi(u)(v_t+v-u) dx}
%%\\
%\leq&\displaystyle{({{l}-1})\chi\int_\Omega \tilde{\Psi}(u)|{r} v| dx}
%\\
\leq&\displaystyle{\frac{{{l}-1}}{{l}}\chi \int_\Omega u^{{l}}|\Delta v| dx}
\\
\leq&\displaystyle{\frac{\mu}{8}\int_\Omega  u^{{l+r-1}}dx+C_2\int_\Omega |\Delta v|^{\frac{r+l-1}{r-1}} dx}
\\
\leq&\displaystyle{\frac{\mu}{8}\int_\Omega  u^{{l+r-1}}dx+\int_\Omega |\Delta v|^{l+1} dx+C_3.}
\\
\end{array}
\label{eert99cz2.563019114}
\end{equation}
%where $A$ is given by \dref{zjscz2.5297x9630222211444125}.

Next, due to \dref{vbgncz2.5xx1ffgghh512} and the Young inequality,  we derive that there exist positive constants $C_4,C_5$ and
 $C_6$ such that
\begin{equation}
\begin{array}{rl}
-\xi\displaystyle\int_\Omega \nabla\cdot( u\nabla w)
  u^{{l}-1} dx\leq &\displaystyle{C_\beta(\frac{l-1}{l}
\int_\Omega  u^{l}(v+1)+l\int_{\Omega} u^{l-1}|\nabla u|)}
\\
\leq &\displaystyle{C_4(
\int_\Omega  u^{l}(v+1)+l\int_{\Omega} u^{l-1}|\nabla u|)}
\\
\leq &\displaystyle{\frac{l-1}{2}\int_{\Omega} u^{l-2}|\nabla u|^2+C_5\int_\Omega  u^{l}+C_5
\int_\Omega  u^{l}v}
\\
\leq &\displaystyle{\frac{l-1}{2}\int_{\Omega} u^{l-2}|\nabla u|^2+\frac{\mu}{8}\int_\Omega  u^{l+r-1}+
\int_\Omega v^{l+1}+C_6.}
\\
\end{array}
\label{eertt56czssddee2.5630ssddddd19114}
\end{equation}
Thus, inserting \dref{999999cz2.563011228ddff}--\dref{cz2.563019114gghh} into \dref{9999999cz2.5114114}, we get
\begin{equation}\label{eertt56czssdderrssddee2.5630ssddddd19114}
\begin{array}{rl}
\displaystyle\frac{1}{{l}}\displaystyle\frac{d}{dt}\|u\|^{{{l}}}_{L^{{l}}(\Omega)}+\frac{l-1}{2}\int_{\Omega} u^{l-2}|\nabla u|^2\leq&\displaystyle{- \frac{5\mu}{8}\int_\Omega u^{{{l+r-1}}} dx-\frac{{l}+1}{{l}}\int_{\Omega} u^{l} dx}\\
&+\displaystyle{\int_\Omega |\Delta v|^{ {l}+1} dx+\int_\Omega v^{l+1}+
C_7.}\\
%\leq&\displaystyle{\int_\Omega (|f(x,t)|+L)|u|^{q+1} dx}\\
%\leq&\displaystyle{\int_\Omega (k_1|u|^\alpha+k_2)|u|^{q+1} dx}\\
%=&\displaystyle{\int\int\triangle J(x-y)u(y)(u|u|^q(x))dydx.}
%\leq&\displaystyle{(|g|_{L^\infty(0,\omega; L^\infty(\Omega))}+1)(|\Omega|+1)^{\frac{1}{2}}|u|^{q+1}_{q+2}.}\\
%\leq&\displaystyle{\frac{q+1}{q+2}((|f|_{L^\infty(0,\omega; L^\infty(\Omega))}+1)(|\Omega|+1)^{\frac{1}{2}})^{\frac{q+2}{q+1}}|u|^{q+2}_{q+2}+\frac{1}{q+2}}\\
%\leq&\displaystyle{\frac{q+1}{q+2}(|f|_{L^\infty(0,\omega; L^\infty(\Omega))}+1)^2(|\Omega|+1)|u|^{q+2}_{q+2}+\frac{1}{q+2}.}\\
\end{array}
\end{equation}

For any $t\in (s_0,T_{max})$,
applying the variation-of-constants formula to \dref{eertt56czssdderrssddee2.5630ssddddd19114}, we get
\begin{equation}
\begin{array}{rl}
&\displaystyle{\frac{1}{{l}}\|u(t) \|^{{{l}}}_{L^{{l}}(\Omega)}}
\\
\leq&\displaystyle{\frac{1}{{l}}e^{-( { {l}+1})(t-s_0)}\|u(s_0) \|^{{{l}}}_{L^{{l}}(\Omega)}- \frac{5\mu}{8}\int_{s_0}^t
e^{-( { {l}+1})(t-s)}\int_\Omega u^{{{l+r}-1}} dxds}\\
&+\displaystyle{\int_{s_0}^t
e^{-( { {l}+1})(t-s)}\int_\Omega |\Delta v|^{ {l}+1} dxds+ C_7\int_{s_0}^t
e^{-( { {l}+1})(t-s)}ds+\int_{s_0}^t
e^{-( { {l}+1})(t-s)}\int_\Omega v^{{{l}+1}} dxds}\\
\leq&\displaystyle{- \frac{5\mu}{8}\int_{s_0}^t
e^{-( { {l}+1})(t-s)}\int_\Omega u^{{{l+r}-1}} dxds+\int_{s_0}^t
e^{-( { {l}+1})(t-s)}\int_\Omega (|\Delta v|^{ {l}+1}+v^{{{l}+1}}) dxds+C_8,}\\
%\leq&\displaystyle{\int_\Omega (|f(x,t)|+L)|u|^{q+1} dx}\\
%\leq&\displaystyle{\int_\Omega (k_1|u|^\alpha+k_2)|u|^{q+1} dx}\\
%=&\displaystyle{\int\int\triangle J(x-y)u(y)(u|u|^q(x))dydx.}
%\leq&\displaystyle{(|g|_{L^\infty(0,\omega; L^\infty(\Omega))}+1)(|\Omega|+1)^{\frac{1}{2}}|u|^{q+1}_{q+2}.}\\
%\leq&\displaystyle{\frac{q+1}{q+2}((|f|_{L^\infty(0,\omega; L^\infty(\Omega))}+1)(|\Omega|+1)^{\frac{1}{2}})^{\frac{q+2}{q+1}}|u|^{q+2}_{q+2}+\frac{1}{q+2}}\\
%\leq&\displaystyle{\frac{q+1}{q+2}(|f|_{L^\infty(0,\omega; L^\infty(\Omega))}+1)^2(|\Omega|+1)|u|^{q+2}_{q+2}+\frac{1}{q+2}.}\\
\end{array}
\label{1122cz2.5kk1214114114rrgg}
\end{equation}
%for all $t\in(s_0,T)$ and $s_0\leq1$,
where
$$C_8:=\frac{1}{{l}}\|u(s_0) \|^{{{l}}}_{L^{{l}}(\Omega)}+
 C_7\int_{s_0}^t
e^{-( { {l}+1})(t-s)}ds.$$
Now, by Lemma \ref{lemma45xy1222232}, we have
\begin{equation}\label{1122cz2.5kk1214114114rrggjjkk}
\begin{array}{rl}
&\displaystyle{\int_{s_0}^t
e^{-( { {l}+1})(t-s)}\int_\Omega (|\Delta v|^{ {l}+1}+|v|^{ {l}+1} ) dxds}
\\
=&\displaystyle{e^{-( { {l}+1})t}\int_{s_0}^t
e^{( { {l}+1})s}\int_\Omega  (|\Delta v|^{ {l}+1}+|v|^{ {l}+1} ) dxds}\\
%\leq&\displaystyle{{A}_1\lambda_0^{- {l} }\chi^{ {l}+1}e^{-( { {l}+1})t+[2-r]^+t}\int_{s_0}^t
%e^{( { {l}+1})s}\int_\Omega |\Delta v|^{ {l}+1} dxds}\\
\leq&\displaystyle{e^{-( { {l}+1})t}C_{ {l}+1}(\int_{s_0}^t
\int_\Omega e^{( { {l}+1})s}u^{ {l}+1} dxds+e^{( { {l}+1})s_0}\|v(s_0,t)\|^{ {l}+1}_{W^{2, { {l}+1}}})}\\
%=&\displaystyle{{A}_1\lambda_0^{- {l} }\chi^{ {l}+1}C_{ {l}+1}(\int_{s_0}^t
%\int_\Omega e^{-( { {l}+1})(t-s)} u^{{l}+1} dxds}\\
%&+\displaystyle{e^{( { {l}+1})s_0}\|v(s_0,t)\|^{ {l}+1}_{W^{2, { {l}+1}}})}\\
%\leq&\displaystyle{\int_\Omega (|f(x,t)|+L)|u|^{q+1} dx}\\
%\leq&\displaystyle{\int_\Omega (k_1|u|^\alpha+k_2)|u|^{q+1} dx}\\
%=&\displaystyle{\int\int\triangle J(x-y)u(y)(u|u|^q(x))dydx.}
%\leq&\displaystyle{(|g|_{L^\infty(0,\omega; L^\infty(\Omega))}+1)(|\Omega|+1)^{\frac{1}{2}}|u|^{q+1}_{q+2}.}\\
%\leq&\displaystyle{\frac{q+1}{q+2}((|f|_{L^\infty(0,\omega; L^\infty(\Omega))}+1)(|\Omega|+1)^{\frac{1}{2}})^{\frac{q+2}{q+1}}|u|^{q+2}_{q+2}+\frac{1}{q+2}}\\
%\leq&\displaystyle{\frac{q+1}{q+2}(|f|_{L^\infty(0,\omega; L^\infty(\Omega))}+1)^2(|\Omega|+1)|u|^{q+2}_{q+2}+\frac{1}{q+2}.}\\
\end{array}
\end{equation}
for all $t\in(s_0, T_{max})$.
By substituting \dref{1122cz2.5kk1214114114rrggjjkk} into \dref{1122cz2.5kk1214114114rrgg} and using the Young inequality, we get
\begin{equation}
\begin{array}{rl}
&\displaystyle{\frac{1}{{l}}\|u(t) \|^{{{l}}}_{L^{{l}}(\Omega)}}
\\
\leq&\displaystyle{- \frac{5\mu}{8}\int_{s_0}^t
e^{-( { {l}+1})(t-s)}\int_\Omega u^{{{l+r}-1}} dxds+C_{ {l}+1}\int_{s_0}^t
e^{-( {l}+1)(t-s)}\int_\Omega u^{{{l}+1}} dxds}\\
&+\displaystyle{e^{-( {l}+1)(t-s_0)}C_{ {l}+1}\|v(s_0,t)\|^{ {l}+1}_{W^{2, { {l}+1}}}+C_8}\\
\leq&\displaystyle{- \frac{\mu}{2}\int_{s_0}^t
e^{-( { {l}+1})(t-s)}\int_\Omega u^{{{l+r}-1}} dxds+C_9}\\
%\leq&\displaystyle{\int_\Omega (|f(x,t)|+L)|u|^{q+1} dx}\\
%\leq&\displaystyle{\int_\Omega (k_1|u|^\alpha+k_2)|u|^{q+1} dx}\\
%=&\displaystyle{\int\int\triangle J(x-y)u(y)(u|u|^q(x))dydx.}
%\leq&\displaystyle{(|g|_{L^\infty(0,\omega; L^\infty(\Omega))}+1)(|\Omega|+1)^{\frac{1}{2}}|u|^{q+1}_{q+2}.}\\
%\leq&\displaystyle{\frac{q+1}{q+2}((|f|_{L^\infty(0,\omega; L^\infty(\Omega))}+1)(|\Omega|+1)^{\frac{1}{2}})^{\frac{q+2}{q+1}}|u|^{q+2}_{q+2}+\frac{1}{q+2}}\\
%\leq&\displaystyle{\frac{q+1}{q+2}(|f|_{L^\infty(0,\omega; L^\infty(\Omega))}+1)^2(|\Omega|+1)|u|^{q+2}_{q+2}+\frac{1}{q+2}.}\\
\end{array}
\label{112233cz2.5kk1214114114rrggkkll}
\end{equation}
with
$$\begin{array}{rl}
C_9=&\displaystyle{e^{-( {l}+1)(t-s_0)}C_{ {l}+1}\|v(s_0,t)\|^{ {l}+1}_{W^{2, { {l}+1}}}}\\
&\displaystyle{+\frac{r-2}{r+l-1}\left(\frac{\mu}{8}\frac{r+l-1}{l+1}\right)^{-\frac{l+1}{r-2}}C_{l+1}^{\frac{r+l-1}{r-2}}\frac{\mu}{8}|\Omega|\int_{s_0}^t
e^{-( { {l}+1})(t-s)}ds+C_8.}\\
\end{array}$$
Therefore, integrating \dref{112233cz2.5kk1214114114rrggkkll} respect to $t$ and using \dref{eqx45xx12112} yields
\begin{equation}
\begin{array}{rl}
\|u(\cdot, t)\|_{L^{{l}}(\Omega)}\leq C_{11} ~~ \mbox{for all}~~l\geq1~~\mbox{and}~~  t\in(0,T_{max}) \\
\end{array}
\label{cz2.5g556789hhjui78jj90099}
\end{equation}
for some positive constant $C_{11}$.
The proof Lemma \ref{lemma45630223} is complete.
\end{proof}
Our main result on global existence and boundedness thereby becomes a straightforward consequence
of Lemmata \ref{lemma45630223}--\ref{99lemma45630223} and Lemma \ref{lemma70}.
Indeed, collecting the above Lemmata, in the following,
by invoking a Moser-type iteration (see Lemma A.1 in \cite{Tao794}) and the standard estimate for Neumann semigroup (or the standard parabolic regularity
arguments), we will prove Theorem \ref{theorem3}.
%Collecting the above Lemmas, we can prove Theorem \ref{theorem3}.

{\bf The proof of Theorem \ref{theorem3}}~
Firstly, due to Lemmata \ref{lemma45630223}--\ref{99lemma45630223}, we derive that there exist positive constants $q_0>N$ and $C_1$ such that
\begin{equation}
\begin{array}{rl}
\|u(\cdot, t)\|_{L^{{q_0}}(\Omega)}\leq C_{1} ~~ \mbox{for all}~~t\in(0,T_{max}). \\
\end{array}
\label{cz2.5g556789hhjssdddui78jj90099}
\end{equation}
Next,
employing  the standard estimate for Neumann semigroup provides $C_2$ and $C_3 > 0$ such that
%Now, involving the variation-of-constants formula
%for $v$, we have
%\begin{equation}
%v(t)=e^{-t(A+1)}v(s_0) +\int_{s_0}^{t}e^{-(t-s)(A+1)}u(s) ds,~~ t\in(0, T_{max}).
%\label{11111fghbnmcz2.5ghju48cfg924ghyuji}
%\end{equation}
%Hence, it follows from \dref{eqx45xx12112} and  %\dref{gbhnhnjmkfgbhnn6291}, \dref{ghnjbhnjgeqx45xx1211},
% \dref{fghbnmcz2.5ghju48cfg924ghyuji} that
\begin{equation}
\begin{array}{rl}
&\displaystyle{\|\nabla v(t)\|_{L^\infty(\Omega)}}\\
\leq&\displaystyle{C_2\int_{s_0}^{t}(t-s)^{-\alpha-\frac{N}{2q_0}}e^{-\mu(t-s)}\|u(s)\|_{L^{q_0}(\Omega)}ds+
C_2s_0^{-\alpha}\|v(s_0,t)\|_{L^\infty(\Omega)}}\\
%\leq&\displaystyle{C(q)\left(\int_{s_0}^{t}(t-s)^{[-\alpha-\frac{N}{2}(\frac{1}{q_0}-\frac{1}{q})]\frac{q_0}{r-1}}e^{-\frac{r\mu}{r-1}(t-s)}ds\right)^{\frac{r-1}{q_0}}
%\left(\int_{s_0}^{t}\|u(s)\|_{L^r(\Omega)}^rds\right)^{\frac{1}{q_0}}}\\
%&+\displaystyle{c\tau^{-\alpha-\frac{N}{2}(1-\frac{1}{q})}( C+(\int_{\Omega}{v}_0(x)- C)e^{-t})}\\
\leq&\displaystyle{C_2\int_{0}^{+\infty}\sigma^{-\alpha-\frac{N}{2q_0}}e^{-\mu\sigma}d\sigma
+C_2s_0^{-\alpha}\beta}\\
\leq&\displaystyle{C_3~~\mbox{for all}~~ t\in(0, T_{max}).}\\
\end{array}
\label{11111gnhmkfghbnmcz2.5ghju48cfg924ghyujiffggg}
\end{equation}
Multiplying both sides of the first equation in \dref{1.1} by $u^{p-1}$, integrating over $\Omega$ and integrating by parts, we conclude that
\begin{equation}
\begin{array}{rl}
&\displaystyle{\frac{1}{{p}}\frac{d}{dt}\|u\|^{{p}}_{L^{{p}}(\Omega)}+({{p}-1})\int_{\Omega}u^{{{p}-2}}|\nabla u|^2dx}
\\
=&\displaystyle{-\chi\int_\Omega \nabla\cdot( u\nabla v)
  u^{{p}-1} dx-\xi\int_\Omega \nabla\cdot( u\nabla w)
  u^{{p}-1} dx+
\int_\Omega   u^{{p}-1}(au-\mu u^r) dx}\\
=&\displaystyle{\chi({p}-1)\int_\Omega  u^{{p}-1}\nabla u\cdot\nabla v
   dx-\xi\int_\Omega \nabla\cdot( u\nabla w)
  u^{{p}-1} dx+
\int_\Omega   u^{{p}-1}(au-\mu u^r) dx.}\\
%\leq&\displaystyle{\int_\Omega (|f(x,t)|+L)|u|^{q+1} dx}\\
%\leq&\displaystyle{\int_\Omega (k_1|u|^\alpha+k_2)|u|^{q+1} dx}\\
%=&\displaystyle{\int\int\triangle J(x-y)u(y)(u|u|^q(x))dydx.}
%\leq&\displaystyle{(|g|_{L^\infty(0,\omega; L^\infty(\Omega))}+1)(|\Omega|+1)^{\frac{1}{2}}|u|^{q+1}_{q+2}.}\\
%\leq&\displaystyle{\frac{q+1}{q+2}((|f|_{L^\infty(0,\omega; L^\infty(\Omega))}+1)(|\Omega|+1)^{\frac{1}{2}})^{\frac{q+2}{q+1}}|u|^{q+2}_{q+2}+\frac{1}{q+2}}\\
%\leq&\displaystyle{\frac{q+1}{q+2}(|f|_{L^\infty(0,\omega; L^\infty(\Omega))}+1)^2(|\Omega|+1)|u|^{q+2}_{q+2}+\frac{1}{q+2}.}\\
\end{array}
\label{cz2aasweee.5ssderfff114114}
\end{equation}
Due to  \dref{vbgncz2.5xx1ffgghh512} and \dref{11111gnhmkfghbnmcz2.5ghju48cfg924ghyujiffggg} and the Young inequality, we derive that there exist positive constants $C_4,C_5,C_6$  and  $C_7$   independent of $p$ such that
\begin{equation}
\begin{array}{rl}
\chi({p}-1)\displaystyle\int_\Omega  u^{{p}-1}\nabla u\cdot\nabla v
   dx\leq &\displaystyle{\chi({p}-1)C_4\displaystyle\int_\Omega  u^{{p}-1}|\nabla u|
   dx}\\
%\leq &\displaystyle{\frac{p-1}{2}\int_{\Omega} u^{p-2}|\nabla u|^2+C_5p
%\int_\Omega  u^{p}}\\
\leq &\displaystyle{\frac{p-1}{4}\int_{\Omega} u^{p-2}|\nabla u|^2+C_5p
\int_\Omega  u^{p}}\\
%\leq&\displaystyle{\int_\Omega (|f(x,t)|+L)|u|^{q+1} {}}\\
%\leq&\displaystyle{\int_\Omega (k_1|u|^\vartheta+k_2)|u|^{q+1} {}}\\
%=&\displaystyle{\int\int\triangle J(x-y)u(y)(u|u|^q(x))dy{}.}
%\leq&\displaystyle{(|g|_{L^\infty(0,\omega; L^\infty(\Omega))}+1)(|\Omega|+1)^{\frac{1}{2}}|u|^{q+1}_{q+2}.}\\
%\leq&\displaystyle{\frac{q+1}{q+2}((|f|_{L^\infty(0,\omega; L^\infty(\Omega))}+1)(|\Omega|+1)^{\frac{1}{2}})^{\frac{q+2}{q+1}}|u|^{q+2}_{q+2}+\frac{1}{q+2}}\\
%\leq&\displaystyle{\frac{q+1}{q+2}(|f|_{L^\infty(0,\omega; L^\infty(\Omega))}+1)^2(|\Omega|+1)|u|^{q+2}_{q+2}+\frac{1}{q+2}.}\\
\end{array}
\label{vddfffbgncddfffz2.5xx1ffgghh512}
\end{equation}
and
\begin{equation}
\begin{array}{rl}
-\xi\displaystyle\int_\Omega  u^{p-1}\nabla\cdot(u\nabla w )\leq &\displaystyle{C_6(
\int_\Omega  u^{p}(v+1)+p\int_{\Omega} u^{p-1}|\nabla u|)}\\
%\leq &\displaystyle{\frac{p-1}{2}\int_{\Omega} u^{p-2}|\nabla u|^2+C_5p
%\int_\Omega  u^{p}}\\
\leq &\displaystyle{\frac{p-1}{4}\int_{\Omega} u^{p-2}|\nabla u|^2+C_7p
\int_\Omega  u^{p}.}\\
%\leq&\displaystyle{\int_\Omega (|f(x,t)|+L)|u|^{q+1} {}}\\
%\leq&\displaystyle{\int_\Omega (k_1|u|^\vartheta+k_2)|u|^{q+1} {}}\\
%=&\displaystyle{\int\int\triangle J(x-y)u(y)(u|u|^q(x))dy{}.}
%\leq&\displaystyle{(|g|_{L^\infty(0,\omega; L^\infty(\Omega))}+1)(|\Omega|+1)^{\frac{1}{2}}|u|^{q+1}_{q+2}.}\\
%\leq&\displaystyle{\frac{q+1}{q+2}((|f|_{L^\infty(0,\omega; L^\infty(\Omega))}+1)(|\Omega|+1)^{\frac{1}{2}})^{\frac{q+2}{q+1}}|u|^{q+2}_{q+2}+\frac{1}{q+2}}\\
%\leq&\displaystyle{\frac{q+1}{q+2}(|f|_{L^\infty(0,\omega; L^\infty(\Omega))}+1)^2(|\Omega|+1)|u|^{q+2}_{q+2}+\frac{1}{q+2}.}\\
\end{array}
\label{vddfffbgncddfffz2.5xx1fddffrttfgghh512}
\end{equation}

Hence by \dref{cz2aasweee.5ssderfff114114}--\dref{vddfffbgncddfffz2.5xx1fddffrttfgghh512}, we conclude that there exist positive constants $C_8$ and $C_9$ independent of $p$ such that
\begin{equation}
\begin{array}{rl}
&\displaystyle{\frac{d}{dt}\|u\|^{{p}}_{L^{{p}}(\Omega)}+C_8\int_{\Omega}|\nabla u^{\frac{p}{2}}|^2dx+\int_{\Omega}u^{p}\leq C_9p^2
\int_\Omega  u^{p}.}\\
%\leq&\displaystyle{\int_\Omega (|f(x,t)|+L)|u|^{q+1} dx}\\
%\leq&\displaystyle{\int_\Omega (k_1|u|^\alpha+k_2)|u|^{q+1} dx}\\
%=&\displaystyle{\int\int\triangle J(x-y)u(y)(u|u|^q(x))dydx.}
%\leq&\displaystyle{(|g|_{L^\infty(0,\omega; L^\infty(\Omega))}+1)(|\Omega|+1)^{\frac{1}{2}}|u|^{q+1}_{q+2}.}\\
%\leq&\displaystyle{\frac{q+1}{q+2}((|f|_{L^\infty(0,\omega; L^\infty(\Omega))}+1)(|\Omega|+1)^{\frac{1}{2}})^{\frac{q+2}{q+1}}|u|^{q+2}_{q+2}+\frac{1}{q+2}}\\
%\leq&\displaystyle{\frac{q+1}{q+2}(|f|_{L^\infty(0,\omega; L^\infty(\Omega))}+1)^2(|\Omega|+1)|u|^{q+2}_{q+2}+\frac{1}{q+2}.}\\
\end{array}
\label{cz2aasweee.5ssdessderrrfff114114}
\end{equation}
Here and throughout the proof of Theorem \ref{theorem3}, we shall
denote by $C_i(i\in \mathbb{N})$ several positive constants independent of $p$.
Next, with the help of the Gagliardo--Nirenberg inequality, we derive that
\begin{equation}
\begin{array}{rl}
C_9p^2
\displaystyle\int_\Omega  u^{p}=&\displaystyle{ C_9p^2\|u^{\frac{p}{2}}\|_{L^2(\Omega)}^2 }\\
\leq&\displaystyle{ C_9p^2(\|\nabla u^{\frac{p}{2}}\|_{L^{2}(\Omega)}^{2\varsigma_1}
\| u^{\frac{p}{2}}\|_{L^1(\Omega)}^{2(1-\varsigma_1)}+ \| u^{\frac{p}{2}}\|_{L^1(\Omega)}^{2}) }\\
=&\displaystyle{ C_9p^2(\|\nabla u^{\frac{p}{2}}\|_{L^{2}(\Omega)}^{\frac{2N}{N+2}}
\| u^{\frac{p}{2}}\|_{L^1(\Omega)}^{\frac{4}{N+2}}+ \| u^{\frac{p}{2}}\|_{L^1(\Omega)}^{2})}\\
\leq&\displaystyle{  C_9p^2(\|\nabla u^{\frac{p}{2}}\|_{L^{2}(\Omega)}^{\frac{2N}{N+2}}
\| u^{\frac{p}{2}}\|_{L^1(\Omega)}^{\frac{4}{N+2}}+ \| u^{\frac{p}{2}}\|_{L^1(\Omega)}^{2})}\\
\leq&\displaystyle{  C_8\|\nabla u^{\frac{p}{2}}\|_{L^{2}(\Omega)}^{2}+C_{10}p^{N+2}
\| u^{\frac{p}{2}}\|_{L^1(\Omega)}^{2}+  C_9p^2\| u^{\frac{p}{2}}\|_{L^1(\Omega)}^{2}}\\
\leq&\displaystyle{  C_8\|\nabla u^{\frac{p}{2}}\|_{L^{2}(\Omega)}^{2}+C_{11}p^{N+2}
\| u^{\frac{p}{2}}\|_{L^1(\Omega)}^{2},}\\
%\leq&\displaystyle{\int_\Omega (|f(x,t)|+L)|u|^{q+1} dx}\\
%\leq&\displaystyle{\int_\Omega (k_1|u|^\alpha+k_2)|u|^{q+1} dx}\\
%=&\displaystyle{\int\int\triangle J(x-y)u(y)(u|u|^q(x))dydx.}
%\leq&\displaystyle{(|g|_{L^\infty(0,\omega; L^\infty(\Omega))}+1)(|\Omega|+1)^{\frac{1}{2}}|u|^{q+1}_{q+2}.}\\
%\leq&\displaystyle{\frac{q+1}{q+2}((|f|_{L^\infty(0,\omega; L^\infty(\Omega))}+1)(|\Omega|+1)^{\frac{1}{2}})^{\frac{q+2}{q+1}}|u|^{q+2}_{q+2}+\frac{1}{q+2}}\\
%\leq&\displaystyle{\frac{q+1}{q+2}(|f|_{L^\infty(0,\omega; L^\infty(\Omega))}+1)^2(|\Omega|+1)|u|^{q+2}_{q+2}+\frac{1}{q+2}.}\\
\end{array}
\label{cz2aasweee.5ssdessderrrffssdffff114114}
\end{equation}
where
$$0<\varsigma_1=\frac{N-\frac{N}{2}}{1-\frac{N}{2}+N}=\frac{N}{N+2}<1,$$
$C_{10}$ and $C_{11}$ are positive constants independent of $p$.
Therefore, inserting \dref{cz2aasweee.5ssdessderrrffssdffff114114} into \dref{cz2aasweee.5ssdessderrrfff114114}, we derive that
\begin{equation}
\begin{array}{rl}
\displaystyle\frac{d}{dt}\| u \|^{p}_{L^p(\Omega)}+\displaystyle\int_\Omega  u ^{p}{}\leq&\displaystyle{ C_{11}{p}^{2+N}
\| u^{\frac{p}{2}}\|_{L^1(\Omega)}^{2}}\\
\leq&\displaystyle{ C_{11}{p}^{2+N}\left(\max\{1,
\| u^{\frac{p}{2}}\|_{L^1(\Omega)}^{2}\right)^2.}\\
%\leq&\displaystyle{\int_\Omega (|f(x,t)|+L)|u|^{q+1} {}}\\
%\leq&\displaystyle{\int_\Omega (k_1|u|^\alpha+k_2)|u|^{q+1} {}}\\
%=&\displaystyle{\int\int\triangle J(x-y)u(y)(u|u|^q(x))dy{}.}
%\leq&\displaystyle{(|g|_{L^\infty(0,\omega; L^\infty(\Omega))}+1)(|\Omega|+1)^{\frac{1}{2}}|u|^{q+1}_{q+2}.}\\
%\leq&\displaystyle{\frac{q+1}{q+2}((|f|_{L^\infty(0,\omega; L^\infty(\Omega))}+1)(|\Omega|+1)^{\frac{1}{2}})^{\frac{q+2}{q+1}}|u|^{q+2}_{q+2}+\frac{1}{q+2}}\\
%\leq&\displaystyle{\frac{q+1}{q+2}(|f|_{L^\infty(0,\omega; L^\infty(\Omega))}+1)^2(|\Omega|+1)|u|^{q+2}_{q+2}+\frac{1}{q+2}.}\\
\end{array}
\label{zjscz2.5297x9630111rrd67ddfff512}
\end{equation}
Now, choosing $p_i=2^i$ and letting $M_i =\max\{1,\sup_{t\in(0,T)}\int_{\Omega} u ^{\frac{{p_i}}{2}}\}$ for $T\in (0, T_{max})$
and $i=1, 2, \cdots$.
Then \dref{zjscz2.5297x9630111rrd67ddfff512} implies that
%$$\mu_i\leq\max\{\lambda^iM_{i-1}^2,\|u_0\|_{L^\infty(\Omega)}^{p_i}\}$$
%with some $\lambda> 1.$
%Now, by
\begin{equation}
\begin{array}{rl}
&\displaystyle{\frac{d}{dt}\| u \|^{p_i}_{L^{p_i}(\Omega)}+\int_\Omega  u ^{{p_i}}{}\leq C_{11}{p_i^{2+N}}
M^2_{i-1}(T),}\\
%\leq&\displaystyle{\int_\Omega (|f(x,t)|+L)|u|^{q+1} {}}\\
%\leq&\displaystyle{\int_\Omega (k_1|u|^\alpha+k_2)|u|^{q+1} {}}\\
%=&\displaystyle{\int\int\triangle J(x-y)u(y)(u|u|^q(x))dy{}.}
%\leq&\displaystyle{(|g|_{L^\infty(0,\omega; L^\infty(\Omega))}+1)(|\Omega|+1)^{\frac{1}{2}}|u|^{q+1}_{q+2}.}\\
%\leq&\displaystyle{\frac{q+1}{q+2}((|f|_{L^\infty(0,\omega; L^\infty(\Omega))}+1)(|\Omega|+1)^{\frac{1}{2}})^{\frac{q+2}{q+1}}|u|^{q+2}_{q+2}+\frac{1}{q+2}}\\
%\leq&\displaystyle{\frac{q+1}{q+2}(|f|_{L^\infty(0,\omega; L^\infty(\Omega))}+1)^2(|\Omega|+1)|u|^{q+2}_{q+2}+\frac{1}{q+2}.}\\
\end{array}
\label{zjscz2.5297x9630111rrd67ddfff512df515}
\end{equation}
which, together with the comparison argument entails that there exists a $\lambda>1$ independent of $i$ such that
\begin{equation}
\begin{array}{rl}
&\displaystyle{M_{i}(T)\leq \max\{\lambda^iM^2_{i-1}(T),|\Omega|\|u_0\|_{L^\infty(\Omega)}^{p_i}\}.}\\
%\leq&\displaystyle{\int_\Omega (|f(x,t)|+L)|u|^{q+1} {}}\\
%\leq&\displaystyle{\int_\Omega (k_1|u|^\alpha+k_2)|u|^{q+1} {}}\\
%=&\displaystyle{\int\int\triangle J(x-y)u(y)(u|u|^q(x))dy{}.}
%\leq&\displaystyle{(|g|_{L^\infty(0,\omega; L^\infty(\Omega))}+1)(|\Omega|+1)^{\frac{1}{2}}|u|^{q+1}_{q+2}.}\\
%\leq&\displaystyle{\frac{q+1}{q+2}((|f|_{L^\infty(0,\omega; L^\infty(\Omega))}+1)(|\Omega|+1)^{\frac{1}{2}})^{\frac{q+2}{q+1}}|u|^{q+2}_{q+2}+\frac{1}{q+2}}\\
%\leq&\displaystyle{\frac{q+1}{q+2}(|f|_{L^\infty(0,\omega; L^\infty(\Omega))}+1)^2(|\Omega|+1)|u|^{q+2}_{q+2}+\frac{1}{q+2}.}\\
\end{array}
\label{zjscz2.5297x9630111rrd6ssdd7ddfff512df515}
\end{equation}
Now, if $\lambda^iM^2_{i-1}(T)\leq|\Omega|\|u_0\|_{L^\infty(\Omega)}^{p_i}$ for infinitely many
$i\geq 1$, we get
\begin{equation}\|u(\cdot,t)\|_{L^\infty(\Omega)}\leq C_{12}~~~\mbox{for all}~~~t\in(0,T)
\label{zjscz2.5297x963011sdertt1rrd6ssdd7ddfff512df515}
\end{equation}
 with $C_{12}= \|u_0\|_{L^\infty(\Omega)}$.
Otherwise, if $\lambda^iM^2_{i-1}(T)>|\Omega|\|u_0\|_{L^\infty(\Omega)}^{p_i}$ for all sufficiently large $i$, then by \dref{zjscz2.5297x9630111rrd6ssdd7ddfff512df515}, we derive that
\begin{equation}
\begin{array}{rl}
&\displaystyle{M_{i}(T)\leq \lambda^iM^2_{i-1}(T)~~~\mbox{for all sufficiently large}~~~i.}\\
%\leq&\displaystyle{\int_\Omega (|f(x,t)|+L)|u|^{q+1} {}}\\
%\leq&\displaystyle{\int_\Omega (k_1|u|^\alpha+k_2)|u|^{q+1} {}}\\
%=&\displaystyle{\int\int\triangle J(x-y)u(y)(u|u|^q(x))dy{}.}
%\leq&\displaystyle{(|g|_{L^\infty(0,\omega; L^\infty(\Omega))}+1)(|\Omega|+1)^{\frac{1}{2}}|u|^{q+1}_{q+2}.}\\
%\leq&\displaystyle{\frac{q+1}{q+2}((|f|_{L^\infty(0,\omega; L^\infty(\Omega))}+1)(|\Omega|+1)^{\frac{1}{2}})^{\frac{q+2}{q+1}}|u|^{q+2}_{q+2}+\frac{1}{q+2}}\\
%\leq&\displaystyle{\frac{q+1}{q+2}(|f|_{L^\infty(0,\omega; L^\infty(\Omega))}+1)^2(|\Omega|+1)|u|^{q+2}_{q+2}+\frac{1}{q+2}.}\\
\end{array}
\label{zjscz2.5297x9630111rrd6ssdd7ddssddfffffff512df515}
\end{equation}
 Hence, we may choose $\lambda$ large enough such that
\begin{equation}
\begin{array}{rl}
&\displaystyle{M_{i}(T)\leq \lambda^iM^2_{i-1}(T)~~~\mbox{for all}~~~i\geq1.}\\
%\leq&\displaystyle{\int_\Omega (|f(x,t)|+L)|u|^{q+1} {}}\\
%\leq&\displaystyle{\int_\Omega (k_1|u|^\alpha+k_2)|u|^{q+1} {}}\\
%=&\displaystyle{\int\int\triangle J(x-y)u(y)(u|u|^q(x))dy{}.}
%\leq&\displaystyle{(|g|_{L^\infty(0,\omega; L^\infty(\Omega))}+1)(|\Omega|+1)^{\frac{1}{2}}|u|^{q+1}_{q+2}.}\\
%\leq&\displaystyle{\frac{q+1}{q+2}((|f|_{L^\infty(0,\omega; L^\infty(\Omega))}+1)(|\Omega|+1)^{\frac{1}{2}})^{\frac{q+2}{q+1}}|u|^{q+2}_{q+2}+\frac{1}{q+2}}\\
%\leq&\displaystyle{\frac{q+1}{q+2}(|f|_{L^\infty(0,\omega; L^\infty(\Omega))}+1)^2(|\Omega|+1)|u|^{q+2}_{q+2}+\frac{1}{q+2}.}\\
\end{array}
\label{zjscz2.5297x9630111rrd6ssdfffffdd7ddssddsddfffffffff512df515}
\end{equation}
% By a straightforward induction, we get
 Therefore, in light of
 a straightforward induction (see e.g. Lemma 3.12 of \cite{Tao79477}) we have
\begin{equation}
\begin{array}{rl}
\disp M_{i}(T)\leq&\displaystyle{
\lambda^i
(\lambda^{i-1}M_{i-2}^{2})^{2}}\\
=&\displaystyle{\lambda^{i+2(i-1)}M_{i-2}^{2^2}}\\
\leq&\displaystyle{\lambda^{i+\Sigma_{j=2}^i(j-1)}M_{0}^{2^i}.}\\
\end{array}
\label{cz2.56303hhyy890678789ty4tt8890013378}
\end{equation}
%where $\delta_k := {\frac{\beta}{8}}_k-2$ satisfies $\delta_k =(q+1-m)\max\{\frac{2}{\lambda_k+m-1},\frac{N+2}{\lambda_k+m-1-(q+1-m)N}\}\leq \frac{C_{10}
%}{2^k}$ for all $k\geq 1$ with some $C_{10}> 0$.
 Taking $p_i$-th
roots on both sides of \dref{cz2.56303hhyy890678789ty4tt8890013378}, with some basic calculation and by taking $T\nearrow T_{max}$,  we can finally conclude that
\begin{equation}\|u(\cdot,t)\|_{L^\infty(\Omega)}\leq C_{13}~~~\mbox{for all}~~~t\in(0,T_{max}).
\label{zjscz2.5297x96ssddd3011sdertt1rrd6ssdd7ddffsdddddf512df515}
\end{equation}
 %with $C = \|u_0\|_{L^\infty(\Omega)}$. \dref{zjscz2.5297x9630111rrddtt}.
Now, with the above estimate in hand, using \dref{vbgncz2.5xx1ffsskkopffggpgghh512}, we may %invoke
%Lemma A.1 in \cite{Tao794} which by means of a Moser-type iteration applied to the first equation in \dref{1.1}
establish
\begin{equation}
\|\nabla w(\cdot,t)\|_{L^{\infty}(\Omega)}  \leq C_{14} ~~\mbox{for all}~~ t\in(0,T_{max}).
\label{hjjkkzjscz2.5297x9630111kkhhii}
\end{equation}
Finally, according to Lemma \ref{lemma70}, this together with \dref{11111gnhmkfghbnmcz2.5ghju48cfg924ghyujiffggg} and \dref{zjscz2.5297x96ssddd3011sdertt1rrd6ssdd7ddffsdddddf512df515}  entails that $(u, v,w)$ is
global in time, and that $u$ is bounded in $\Omega\times(0,\infty)$. $\qed$

{\bf Acknowledgement}:
% The author is very grateful to the anonymous reviewers for their carefully reading and valuable suggestions
%which greatly improved this work.
This work is partially supported by  the National Natural
Science Foundation of China (No. 11601215), Shandong Provincial
Science Foundation for Outstanding Youth (No. ZR2018JL005), Shandong Provincial
Natural Science Foundation,  China (No. ZR2016AQ17) and the Doctor Start-up Funding of Ludong University (No. LA2016006).

\end{document}